\newtheorem{thm}{Theorem}[section]
\newtheorem{cor}[thm]{Corollary}
\newtheorem{lem}[thm]{Lemma}
\newtheorem{prop}[thm]{Proposition}
\theoremstyle{definition}
\newtheorem{defn}[thm]{Definition}
\newtheorem{exm}[thm]{Example}
\numberwithin{equation}{section}
\DeclareMathOperator{\N}{\mathbb {N}}
\DeclareMathOperator{\Z}{\mathbb {Z}}
\DeclareMathOperator{\Q}{\mathbb {Q}}
\DeclareMathOperator{\R}{\mathbb {R}}
\DeclareMathOperator{\depth}{depth}
\DeclareMathOperator{\ass}{Ass}
\DeclareMathOperator{\dstab}{dstab}
\DeclareMathOperator{\astab}{astab}
\DeclareMathOperator{\conv}{conv}
\def\alb {\mathbf {\alpha}}
\def\btb {\mathbf {\beta}}
\def\zv {\mathbf 0}
\def\e {\mathbf e}
\def\f {\mathbf f}
\def\v {\mathbf v}
\def\a {\mathbf a}
\def\b {\mathbf b}
\def\x {\mathbf x}
\def\mi {\mathfrak m}
\def\nn {\mathfrak n}
\def\p {\mathfrak p}
\def\q {\mathfrak q}
\def\al {\alpha}
\def\F {\mathcal{F}}
\begin{document}

\title[Stability of Associated Primes and Depth of Integral Closures of Powers of Edge Ideals] {Stability of Associated Primes and Depth of Integral Closures of Powers of Edge Ideals}

\author[D.H. Mau]{ Dong Huu Mau}
\address{Faculty of Natural Sciences, Hanoi Metropolitan University, Ha Noi, Viet Nam.}
\email{dhmau@hnmu.edu.vn}

\author[T.N. Trung]{ Tran Nam Trung}
\address{Institute of Mathematics, VAST, 18 Hoang Quoc Viet, Hanoi, Viet Nam, and Institute of Mathematics and 
TIMAS, Thang Long University, Ha Noi, Vietnam.}
\email{tntrung@math.ac.vn}

\subjclass{13D45, 05C90, 05E40, 05E45.}
\keywords{Integral closure, associated prime ideal, depth, graph, pseudoforest, edge ideal.}
\date{}
\dedicatory{}
\commby{}
\begin{abstract} In this paper, we study associated primes and depth of integral closures of powers of edge ideals. We provide sharp bounds on how big of powers for which the set of associated primes and the depth of integral closures of powers of edge ideals are stable.
\end{abstract}

\maketitle
\section*{Introduction}

Let $R = K[x_1,\ldots, x_r]$ be a polynomial ring over a field $K$ and $I$ an  ideal in $R$. The integral closure of $I$ is the ideal $\overline{I}$ of all elements of $R$ that satisfy an equation of the form
$$x^n +a_1x^n+\cdots+a_{n-1}x+a_n = 0, \text{ where } n\geqslant 1, a_i \in I^i, \text{ for } i =1,\ldots,n.$$

It is very hard to compute the integral closure $\overline{I}$ from $I$ except for monomial ideals (see e.g. \cite{SH}). However, the integral closures of large powers  of an ideal have very nice behavior. Ratliff \cite{R1,R2} showed that the sequence $\{\ass(R/\overline{I^n})\}_{n\geqslant 1}$ is increasing and it stabilizes for large $n$, i.e. there exists $n_0\geqslant 1$ such that
$$\ass(R/\overline{I^n}) = \ass(R/\overline{I^{n_0}}).$$

Although McAdam \cite{MA1} gave a characterization of the stable set $\ass(R/\overline{I^{n_0}})$. It is still mysterious about the set $\ass(R/\overline{I^{n}})$ for each $n$. 

If we would like to know about how big of $n_0$, according to Herzog, Rauf and Vladiou \cite{HRV} we define {\it the index of stability of integral closures of powers} of $I$ by
$$\overline{\astab}(I) = \min\{n_0 \mid \ass (R/\overline{I^n}) = \ass (R/\overline{I^{n_0}}) \text{ for all } n\geqslant n_0\}.$$

Then, the following problem gives arise.

\medskip

\noindent {\bf Problem 1}. Find a sharp bound for $\overline{\astab}(I)$.

\medskip

Another closely related sequence that we would be interested in is $\{\depth(R/\overline{I^n})\}_{n\geqslant 1}$. From \cite[Theorem 1.1]{HH} one can deduce that $\depth R/\overline{I^n}$ is a constant for sufficiently large $n$. Similar to the associated primes, we define {\it the index of depth stability of integral closures of powers} of $I$ by
$$\overline{\dstab}(I) = \min\{n_0 \mid \depth (R/\overline{I^n}) = \depth (R/\overline{I^{n_0}}) \text{ for all } n\geqslant n_0\}.$$

It is hopeless to know $\depth(R/\overline{I^n})$ for each $n$. Thus we are interested in the following problem.
\medskip

\noindent {\bf Problem 2}. Find a sharp bound for $\overline{\dstab}(I)$.

\medskip

In the case $I$ is a monomial ideal of $R$, then the bounds for $\overline{\astab}(I)$ and $\overline{\dstab}(I)$ is obtained in \cite{HT1, Tr1} in terms of the maximal degree of monomial generators of $I$, however they are  not sharp. It is worth mentioning that the similar problem for regularity is studied by Hoa \cite{Hoa}.

In this paper we study two problems above in the case $I = I(G)$ is an edge ideal of a graph $G$. We obtain effective bounds for $\overline{\astab}(I)$ and $\overline{\dstab}(I)$. We also hope that the work will gives an approach to study $\ass(R/\overline{I^n})$ and $\depth(R/\overline{I^n})$ for edge ideals. Before stating the result we recall some definitions. Let $G$ be a graph on the vertex set $V(G)=\{1,\ldots,r\}$ and the edge set $E(G)$.  We associate to $G$ the quadratic square-free monomial ideal
$$I(G) = (x_ix_j \ |\ \{i,j\} \in E(G)) \subseteq R = K[x_1,\ldots, x_r]$$
which is called the edge ideal of $G$.

We use the symbols $\upsilon(G)$ and $\varepsilon_0(G)$ to denote the number of vertices and leaf edges of $G$,  respectively.

Assume that $G_1,\ldots,G_s$ are all connected nonbipartite components of $G$. Let $2k_i-1$ be the minimum length of odd cycles of $G_i$ for every $i = 1,\ldots, s$ and let $2k-1$ be the minimum length of odd cycles of $G$. Define
$$n_0(G)=
\begin{cases}
1  & \text{ if } s = 0,\\
\sum_{i=1}^s(\upsilon(G_i)-\varepsilon_0(G_i)-k_i)  + j + k  & \text{ if } s = 2j \text{ for } j\geqslant 1,\\
\sum_{i=1}^s(\upsilon(G_i)-\varepsilon_0(G_i)-k_i)  + j + 1  & \text{ if } s = 2j+1 \text{ for } j\geqslant 0,
\end{cases}
$$
and
$$\phi_0(G) = \max\{n_0(G') \mid G' \text{ consists of some connected components of } G\}.$$

Martinez-Bernal, Morey and Villarreal \cite{MMV} proved that 
$$\ass (R/\overline{I(G)^n}) = \ass (R/I(G)^n), \text{ for } n\gg 0.$$ 
However, it is still not known a bound of $\overline{\astab}(I(G))$ in terms of the structure of $G$. The first main result of the paper gives a bound of $\overline{\astab}(I(G))$. 

\medskip

\noindent {\bf Theorem $\ref{T3}$}. {\it $\overline{\astab}(I(G)) \leqslant \phi_0(G)$.}

\medskip

Moreover, the equality occurs if $G$ is a pseudoforest (see Theorem \ref{T6}), where a graph is called {\it pseudoforest} if every its connected component has at most one cycle.

\medskip

We next move on to the index of depth stability. Suppose that $G_1,\ldots,G_s$ are all connected bipartite components of $G$ and $G_{s+1},\ldots,G_{s+t}$ are all connected nonbipartite components of $G$. Let $2k_i$ be the maximum length of cycles of $G_i$ ($k_i =1$ if $G_i$ is a tree) for all $i=1,\ldots,s$; and let $2k_i-1$ be the maximum length of odd cycles of $G_i$ for every $i = s+1,\ldots, s+t$; and let $2m-1$ be the minimum length of odd cycles of $G$. Define
$$\phi_1(G)=
\begin{cases}
\upsilon(G)-\varepsilon_0(G)-\sum_{i=1}^{s+t} k_i +1  & \text{ if } t = 0,\\
\upsilon(G)-\varepsilon_0(G)-\sum_{i=1}^{s+t} k_i +j+m  & \text{ if } t = 2j \text{ for } j\geqslant 1,\\
\upsilon(G)-\varepsilon_0(G)-\sum_{i=1}^{s+t} k_i +j+1  & \text{ if } t = 2j+1 \text{ for } j\geqslant 0.
\end{cases}
$$

The second main result of the paper can be stated as follows.

\medskip

\noindent {\bf Theorem $\ref{T4}$}. {\it $\overline{\dstab}(I(G)) \leqslant \phi_1(G)$.}
\medskip

Moreover, the equality occurs if $G$ is a pseudoforest and it contains no cycles of length $4$ (see Theorem \ref{T8}).

\medskip

The first key point to prove our results is the binomial expansion for the integral closures of sums of ideals. Write $G = G' \cup G''$ where $G'$ is the graph consisting of all connected bipartite components of $G$ and $G''$ is the graph consisting of all connected nonbipartite components of $G$. Then, $I(G) = I(G')+I(G'')$. Moreover, $I(G')$ is normally torsion-free by a well-known result of Simis, Vasconcelos and Villarreal \cite{SVV}. The first step to study the integral closures of powers $\overline{I(G)^n}$ is to express it in terms of $I(G')$ and $I(G'')$. Formally, consider two monomial ideals $I$ and $J$ where $I$ is in $A = K[x_1,\ldots,x_s]$ and $J$ is in  $B=K[y_1,\ldots,y_t]$. We want the binomial expansion for $\overline{(I+J)^n}$ in $R = A\otimes_K B = K[x_1,\ldots,x_s,y_1,\ldots,y_t]$ is working. In fact, we prove the following result.

\medskip

\noindent {\bf Theorem $\ref{T0}$}. {\it Let $I$ be a square-free monomial ideal in $A$ and $J$ a monomial ideal in $B$. Assume that $I$ is normally torsion-free. Then,
$$\overline{(I+J)^n} = \sum_{i=0}^n I^i \overline{J^{n-i}}.$$
}

It allows us to use the results about the associated primes and depth of ordinary powers of edge ideals (see \cite{MMV, Tr1}) for the integral closures of powers of edge ideals. The next step is to find the condition under which $\mi \in \ass(R/\overline{I(G)^n})$ where $\mi = (x_1,\ldots,x_r)$ is the maximal homogeneous ideal of $R$. At first glance, Martinez-Bernal, Morey and Villarreal \cite{MMV} proved that every connected component of $G$ must be nonbipartite. The second key point in the proofs of Theorems \ref{T3} and \ref{T4} is the information on how big of such an $n$ (see Lemma $\ref{A3}$). The approach is to describe the integral closures of a monomial ideal in terms of its Newton polyhedron, so that we can use techniques from the theory of convex polyhedra to study our problems.

\medskip

The paper is organized as follows. In the next section, we collect notations and terminology used in the paper, and recall a few auxiliary results. In Section $2$, we prove the binomial expansion for integral closures of powers of monomial ideals (Theorem \ref{T0}).  In Section $3$ we get an upper bound of $\overline{\astab}(I(G))$ (Theorem \ref{T3}). In section $4$ we get an upper bound of $\overline{\dstab}(I(G))$ (Theorem \ref{T4}). In the last section, we prove Theorems \ref{T6} and \ref{T8}, which say that the bounds obtained in Theorems \ref{T3} and \ref{T4} are sharp.

\section{Preliminary} We shall follow standard notations and terminology from usual texts in the research area (cf. \cite{BM,E, MS}). For a nonnegative integer $r$, we denote the set $\{1,\ldots,r\}$ by $[r]$.

\subsection{Simplicial complexes}

A simplicial complex $\Delta$ over the vertex set $V=[r]$ is a collection of subsets of $V$ such that if $F \in \Delta$ and $G\subseteq F$ then $G\in\Delta$. Elements of $\Delta$ are called faces. Maximal faces (with respect to inclusion) are called facets. For $F \in \Delta$, the dimension of $F$ is defined to be $\dim F = |F|-1$. The empty set, $\emptyset$, is the unique face of dimension $-1$, as long as $\Delta$ is not the void complex $\{\}$ consisting of no subsets of $V$. The dimension of $\Delta$ is $\dim \Delta = \max\{\dim F \mid F\in \Delta\}$. 

For a subset $\tau = \{j_1,\ldots,j_i\}$ of $V$, denote $\x^\tau = x_{j_1} \cdots x_{j_i}$.  The Stanley-Reisner ideal of $\Delta$ is defined to be the square-free monomial ideal
$$I_{\Delta} = (\x^\tau \mid \tau \subseteq V \text{ and } \tau \notin \Delta) \ \text{ in } R = K[x_1,\ldots,x_r]$$
and the {\it Stanley-Reisner} ring of $\Delta$ to be the quotient ring $k[\Delta] = R/I_{\Delta}$.  This provides a bridge between combinatorics and commutative algebra (see \cite{S}). 

Note that if $I$ is a square-free monomial ideal, then it is a Stanley-Reisner ideal of the simplicial complex $\Delta(I)= \{\tau \subseteq V \mid \x^\tau\not \in I\}$. When $I$ is a monomial ideal (maybe not square-free) we also use $\Delta(I)$ to denote the simplicial complex corresponding to the square-free monomial ideal $\sqrt{I}$. 

Let $\F(\Delta)$ denote the set of all facets of $\Delta$. Then, $I_\Delta$ has the minimal primary decomposition (see \cite[Theorem 1.7]{MS}):
$$I_\Delta = \bigcap_{F\in \F(\Delta)} (x_i\mid i\notin F),$$
and therefore the $n$-th symbolic power of $I_\Delta$ is
\begin{equation}\label{s-power}
I_\Delta^{(n)} = \bigcap_{F\in \F(\Delta)} (x_i\mid i\notin F)^n.
\end{equation}
The ideal $I_\Delta$ is called normally torsion-free if $I_\Delta^{(n)} = I_\Delta^n$ for all $n\geqslant 1$. Note that since
$$I_\Delta^n \subseteq \overline{I_\Delta^{n}} \subseteq I_\Delta^{(n)},$$
so that $I_\Delta^{n} = \overline{I_\Delta^{n}} = I_\Delta^{(n)}$ if $I_\Delta$ is normally torsion-free.

\subsection{Graphs and Edge Ideals} We recall some standard notation and terminology from graph theory here. Let $G$ be a graph. We always use the symbols $V(G)$ and $E(G)$ to denote the vertex set and the edge set of $G$, respectively.

The ends of an edge of $G$ are said to be incident with the edge, and vice versa. Two vertices which are incident with a common edge are adjacent, and two distinct adjacent vertices are neighbors. The set of neighbors of a vertex $v$ in $G$ is denoted by $N_G(v)$ and  set $N_G[v] = N_G(v) \cup\{v\}$. The degree of a vertex $v$ in $G$, denoted by $\deg_G(v)$, is  the number of neighbours of $v$ in $G$. 

In a graph, a {\it leaf} is a vertex of degree one and a {\it leaf edge} is an edge incident with a leaf. We use the symbols $\upsilon(G)$ and $\varepsilon_0(G)$ to denote the number of vertices and leaf edges of $G$,  respectively.

For a subset $S\subseteq V(G)$ of the vertices in $G$, define $G\setminus S$ to be the subgraph of $G$ with the vertices in $S$ (and their incident edges) deleted. For two graphs $G'$ and $G''$,  the union of them, denoted by $G' \cup G''$, is the graph $G$ with $V(G) = V(G') \cup V(G'')$ and $E(G) = E(G') \cup E(G'')$.

A subgraph $C$ with $k$ vertices of $G$ is called a {\it cycle} of $G$ if it has exactly $k$ vertices and $k$ edges of the form
$$V(C) = \{v_1,\ldots,v_k\}, \text{ and } E(C) = \{\{v_1,v_2\}, \{v_2,v_3\},\ldots, \{v_{k-1},v_k\}, \{v_k,v_1\}\}.$$
A cycle is {\it even} or {\it odd} if its length is even or odd, respectively.

A connected graph is called a {\it tree} if it contains no cycles, and it is called a {\it unicyclic} graph if it contains exactly one cycle. A graph is called {\it pseudoforest} if every its connected component is either a tree or a unicyclic graph.

Assume that $V(G)=\{1,\ldots,r\}$.  We associate to $G$ the quadratic square-free monomial ideal
$$I(G) = (x_ix_j \ |\ \{i,j\} \in E(G)) \subseteq R = K[x_1,\ldots, x_r]$$
which is called the edge ideal of $G$.

An independent set in $G$ is a set of vertices no two of which are adjacent to each other. An independent set $S$ in $G$ is maximal (with respect to set inclusion) if the addition to $S$ of any other vertex in the graph destroys the independence. Let $\Delta(G)$ be the set of independent sets of $G$. Then $\Delta(G)$ is a simplicial complex and this complex is the so-called independence complex of $G$; and facets of $\Delta(G)$ are just maximal independent sets of $G$. Note that $$I(G) = I_{\Delta(G)}.$$

A set $C\subseteq V(G)$ is a vertex cover of $G$ if every edge has an end in $C$. A minimal vertex cover is a vertex cover which is minimal with respect to inclusion. Note that, $C$ is a vertex cover if and only if $V(G)\setminus C$ is an independent set of $G$. Let us denote by $\Gamma(G)$ the set whose elements are the minimal vertex cover sets of $G$. Then, the decomposition $(\ref{s-power})$ can rewrite as
\begin{equation}\label{s-power-cover}
I(G)^{(n)} = \bigcap_{C \in \Gamma(G)} (x_i\mid i\in C)^n.
\end{equation}

The graph $G$ is bipartite if its vertex set can be partitioned into two subsets, say $X$ and $Y$, so that every edge has one end in $X$ and one end in $Y$ ; such a partition $(X, Y )$ is called a bipartition of $G$. It is well-known that $G$ is bipartite if and only if $G$ contains no odd cycle (see \cite[Theorem $4.7$]{BM}). 

We  have a nice algebraic characterization of bipartite graphs as follows.

\begin{lem}\label{L03} {\rm (\cite[Theorem 5.9]{SVV})} Let $G$ be a graph. Then, $I(G)$ is normally torsion-free if and only if $G$ is bipartite.
\end{lem}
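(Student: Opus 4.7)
The plan is to prove each direction separately: a combinatorial obstruction coming from odd cycles for the forward direction, and LP duality plus total unimodularity for the backward one.

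For ``$I(G)$ normally torsion-free $\Rightarrow G$ bipartite,'' I would argue the contrapositive. Suppose $G$ has an odd cycle and let $2k-1$ be the length of a shortest one, on vertices $v_1,\ldots,v_{2k-1}$. I claim the squarefree monomial $u=x_{v_1}\cdots x_{v_{2k-1}}$ lies in $I(G)^{(k)}\setminus I(G)^k$, already defeating normal torsion-freeness. A degree count gives $u\notin I(G)^k$ since $\deg u=2k-1<2k$ and every nonzero monomial in $I(G)^k$ has degree at least $2k$. For the symbolic membership, note that for each $C\in\Gamma(G)$, the trace $C\cap\{v_1,\ldots,v_{2k-1}\}$ is a vertex cover of the odd cycle of length $2k-1$, so has size at least $k$; by $(\ref{s-power-cover})$ this forces $u\in I(G)^{(k)}$.

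For the converse, assume $G$ is bipartite, fix $n\geqslant 1$, and take a monomial $u=\prod_i x_i^{a_i}\in I(G)^{(n)}$. Showing $u\in I(G)^n$ amounts to producing integers $z_e\geqslant 0$ for $e\in E(G)$ with $\sum_e z_e\geqslant n$ and $\sum_{e\ni i}z_e\leqslant a_i$ for every vertex $i$. The LP relaxation of this packing problem is the fractional $b$-matching LP, whose dual is
\[
\min\Bigl\{\sum_i a_i y_i \;\Big|\; y\geqslant 0,\; y_i+y_j\geqslant 1\ \text{for all}\ \{i,j\}\in E(G)\Bigr\}.
\]
From $u\in I(G)^{(n)}$ and $(\ref{s-power-cover})$ one reads off $\sum_{i\in C}a_i\geqslant n$ for every minimal vertex cover $C$, and since $a\geqslant 0$ also for every $0$-$1$ vertex cover. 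So every integer feasible dual point has objective $\geqslant n$.

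The crux is to transfer this integer lower bound across LP duality and recover an integer primal solution. Bipartiteness makes the vertex-edge incidence matrix of $G$ totally unimodular, hence both the fractional vertex cover polyhedron and the fractional matching polyhedron are integral. Consequently the dual LP optimum coincides with its integer optimum $\geqslant n$, and by LP duality the primal LP optimum is $\geqslant n$ and attained at an integer $z$. Reading these $n$ edges off (with multiplicity) exhibits the required factorization of $u$, placing $u$ in $I(G)^n$. I expect the main obstacle to be packaging the total-unimodularity/integrality step cleanly; an attractive alternative is to induct on $n$, at each step peeling off an edge $\{i,j\}$ with $x_ix_j\mid u$ such that $u/(x_ix_j)\in I(G)^{(n-1)}$, whose existence is a K\"onig-type argument relying again on bipartiteness.
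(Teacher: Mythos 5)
The paper provides no internal argument for Lemma~\ref{L03}: it is stated with a citation to Simis--Vasconcelos--Villarreal and used as a black box. So there is nothing in the source to compare your proof against line by line; what you have written is an independent, self-contained proof of the cited theorem.

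Your argument is correct. For the forward direction, the shortest odd cycle witness $u=x_{v_1}\cdots x_{v_{2k-1}}$ does lie in $I(G)^{(k)}\setminus I(G)^k$: the degree count $2k-1<2k$ rules out $I(G)^k$, and every vertex cover of $G$ must cover the cycle's edges, hence meets $\{v_1,\dots,v_{2k-1}\}$ in at least $k$ vertices, which together with $(\ref{s-power-cover})$ gives symbolic membership. For the converse, the translation of $u\in I(G)^n$ into an integer fractional-matching system is right, and the total unimodularity of the bipartite incidence matrix is exactly what makes both the primal (fractional $b$-matching) and dual (fractional vertex cover) polyhedra integral; one point worth making explicit is that the vertices of $\{y\geqslant 0 : y_i+y_j\geqslant 1\ \forall\{i,j\}\in E\}$ are $0$--$1$ (any coordinate $\geqslant 2$ lies on no tight constraint, so cannot occur at a vertex), so the dual optimum is attained at the characteristic vector of a vertex cover and is $\geqslant n$ by the symbolic hypothesis; strong duality plus integrality of the primal polyhedron then yields an integer $z$ with $\sum_e z_e\geqslant n$ and $\sum_{e\ni i}z_e\leqslant a_i$, i.e.\ a factorization placing $u$ in $I(G)^n$. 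This is the standard K\"onig--Egerv\'ary/TU route and is essentially equivalent to the inductive ``peel off an edge'' alternative you sketch. The one thing the paper's citation buys that your proof does not is brevity; your proof buys self-containment and makes the mechanism transparent, which fits well with the Newton-polyhedron viewpoint the paper uses elsewhere.
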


\subsection{Integral closures of ideals}

Throughout of this paper we let $\e_1,\ldots,\e_r$ be the canonical basis of $\R^r$. For a monomial ideal $I$ of $R$, let $G(I)$ denote the minimal system of monomial generators of $I$. For a vector $\alb =(\al_1,\ldots,\al_r)\in\N^r$, we denote by $\x^{\alb} = x_1^{\alb_1}\cdots x_r^{\al_r}$ a monomial of $R$. We also denote by $\R_{+}$ the set of nonnegative real numbers.

The {\it integral closure} of an arbitrary ideal $J$ of $R$ is the set of elements $x$ in $R$ that satisfy an integral relation
$$x^n + a_1x^{n-1} + \cdots + a_{n-1}x + a_n = 0$$
where $a_i \in J^i$ for $i = 1,\ldots, n$. It is denoted by $\overline J$ and it is an ideal.

The behavior of the sequence $\{\ass(R/ \overline{J^n})\}_{n\geqslant 1}$ is very nice, namely it is increasing.

\begin{lem}\label{HIO}{\rm (See \cite[Proposition $3.9$]{MA})} Let $J$ be an arbitrary ideal of $R$. Then the sequence $\{\ass(R/ \overline{J^n})\}_{n\in\N}$ is increasing.
\end{lem}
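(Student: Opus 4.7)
The plan is to prove the increasing property by showing that every $\p \in \ass(R/\overline{J^n})$ also lies in $\ass(R/\overline{J^{n+1}})$. Write $\p = (\overline{J^n} :_R x)$ for some $x \in R \setminus \overline{J^n}$. The goal is to produce $y \in R \setminus \overline{J^{n+1}}$ with $(\overline{J^{n+1}} :_R y) = \p$.

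The natural candidate is $y = ax$ for a well-chosen $a \in J$, motivated by the submultiplicative inclusion $\overline{J} \cdot \overline{J^n} \subseteq \overline{J^{n+1}}$. Indeed, for any $a \in J \subseteq \overline{J}$ we have $a \cdot \p \cdot x \subseteq a\overline{J^n} \subseteq \overline{J^{n+1}}$, which already gives the inclusion $\p \subseteq (\overline{J^{n+1}} :_R ax)$. Localizing at $\p$ reduces the problem to the case of a local ring with maximal ideal $\p$, simplifying the tracking of annihilators.

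The key technical ingredient is Rees's theorem stating that the normal Rees algebra $\mathcal{N} = \bigoplus_{k \geq 0} \overline{J^k}\, t^k$ is a finitely generated module over the ordinary Rees algebra $R[Jt]$. This guarantees that the integral closure filtration $\{\overline{J^k}\}$ is a good filtration, so an Artin--Rees-type statement applies to the principal submodule $xR$. From this one extracts an element $a \in J$ with $ax \in \overline{J^n} \setminus \overline{J^{n+1}}$, which handles the requirement $y = ax \notin \overline{J^{n+1}}$, so that $(\overline{J^{n+1}} :_R y)$ is a proper ideal of $R$ containing $\p$.

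The main obstacle is the reverse inclusion: showing that $(\overline{J^{n+1}} :_R ax)$ equals $\p$ exactly rather than some strictly larger prime. The standard route is a maximality argument: among all colon ideals of the form $(\overline{J^{n+1}} :_R a'x)$ with $a' \in J$ that contain $\p$, pick one that is maximal by Noetherianity; this yields an associated prime $\q$ of $R/\overline{J^{n+1}}$ with $\q \supseteq \p$. The equality $\q = \p$ is then forced by descending to the graded $\mathcal{N}$-module generated by $xt^n$ and observing, via the finiteness of $\mathcal{N}$ over $R[Jt]$, that after localizing at $\p$ no associated prime of this module strictly exceeds $\p$. This last rigidity step, pinning $\q$ down to $\p$ itself, is the delicate part of the argument and is precisely the content of McAdam's proof in \cite[Proposition 3.9]{MA}.
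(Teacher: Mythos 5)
The paper does not prove this lemma; it cites McAdam's book, so there is no in-text argument to compare against. Your proposal has the right overall shape --- localize at $\p$ and show $y = ax$ works for a suitable $a \in J$ --- but it misplaces the difficulty. After localizing at $\p$, that prime becomes the maximal ideal, and the ``reverse inclusion'' you single out as the main obstacle is automatic: once $ax \notin \overline{J^{n+1}}$ and $\p\cdot ax \subseteq \overline{J^{n+1}}$, the ideal $\overline{J^{n+1}} :_R ax$ is proper and contains the maximal ideal $\p$, hence equals $\p$. There is no larger prime to exclude, no maximality argument needed, and the entire final paragraph (the graded ``rigidity step'') is machinery solving a problem that the localization you already performed has made disappear.

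The step that actually requires an idea is the one you wave through: producing $a \in J$ with $ax \notin \overline{J^{n+1}}$. The clean mechanism is the colon identity $\overline{J^{n+1}} :_R J = \overline{J^n}$, a standard consequence of the valuative description of integral closure (for each Rees valuation $v$ of $J$, from $v(yJ) \geqslant (n+1)\,v(J)$ one gets $v(y) \geqslant n\,v(J)$, which holds for all $v$ precisely when $y \in \overline{J^n}$); since $R$ is a polynomial ring over a field, hence a Noetherian domain, this applies without subtlety. Given the identity, $x \notin \overline{J^n}$ is exactly $Jx \not\subseteq \overline{J^{n+1}}$, so some $a \in J$ has $ax \notin \overline{J^{n+1}}$, and then $\p\cdot ax \subseteq a\,\overline{J^n} \subseteq J\,\overline{J^n} \subseteq \overline{J^{n+1}}$ closes the argument. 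By contrast, your appeal to Rees's finiteness theorem for the normal Rees algebra together with an unspecified Artin--Rees statement is not made precise and does not obviously deliver this: Artin--Rees applied to $xR$ yields containments of the shape $J^{k}x \subseteq \overline{J^{n+k-c}}\,xR$ for a fixed constant $c$, which does not directly force $x \in \overline{J^n}$ or give a contradiction. Replace that discussion with the colon identity and delete the last paragraph, and the proof is correct and short.
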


For monomial ideals $I$, then $\overline{I}$ is a monomial ideal as well. We can describe the integral closure of a monomial ideal $I$ geometrically via its Newton polyhedron.

\begin{defn} Let $I$ be a monomial ideal of $R$. We define
\begin{enumerate}
\item  For a subset $A \subseteq R$, the exponent set of $A$ is $E(A) = \{\alb \mid \x^{\alb}\in A\} \subseteq  \Z^r$.
\smallskip
\item The Newton polyhedron of $I$ is $NP(I) = \conv\{E(I)\}$, the convex hull of the exponent set of $I$ in
the space $\R^r$.
\end{enumerate}
\end{defn}

The bridge between the Newton polyhedron of $I$ and its integral closure is given by the well-known equation:
\begin{equation}\label{EN1}
E(\overline I) = NP(I) \cap \Z^r = \{\alb \in \N^r \mid \x^{n\alb} \in I^n \text{ for some } n\geqslant 1\}.
\end{equation}

The Newton polyhedron of the power $I^n$ for $n\geqslant 1$ is related to $NP(I)$ by
\begin{equation}\label{EN2}
NP(I^n) =nNP(I)= n\conv\{E(I)\} +\R_{+}^r.
\end{equation}

We can describe the Newton polyhedron by a system of linear inequalities in $\R^r$ as follows.

\begin{lem}\label{NPH} The Newton polyhedron $NP(I)$ is the set of solutions of a system of inequalities of the form
$$\begin{cases}
\left<\a_j, \x\right> \geqslant 1,\  j=1,\ldots,q,\\
x_1\geqslant 0, \ldots, x_r \geqslant 0,
\end{cases}
$$
such that each hyperplane with the equation $\left<\a_j,\x\right> = 1$ defines a facet of $NP(I)$, which contains $s_j$ affinely independent points of $E(G(I))$ and is parallel to $r - s_j$ vectors of the canonical basis; and $\zv \ne \a_j \in \Q^r \cap \R_+^r$ for all $j = 1, . . . , q$.
\end{lem}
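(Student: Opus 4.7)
The plan is to treat $NP(I)$ as a concretely described polyhedron and then read off its facet structure using standard polyhedral theory together with the recession cone $\R_+^r$.

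First I would record that since $I$ is monomial with minimal generating set $G(I)=\{\x^{\alb_1},\ldots,\x^{\alb_p}\}$, the exponent set satisfies $E(I)=\{\alb_1,\ldots,\alb_p\}+\N^r$; taking convex hulls gives
$$NP(I)=\conv\{\alb_1,\ldots,\alb_p\}+\R_+^r.$$
This exhibits $NP(I)$ as a pointed polyhedron in $\R_+^r$ whose recession cone is exactly $\R_+^r$. By the Minkowski--Weyl theorem it therefore admits a (finite) external description $NP(I)=\bigcap_j\{\x:\langle \btb_j,\x\rangle\geqslant c_j\}$, where each defining inequality corresponds to a facet.

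Next I would classify these facets using the recession cone. For any supporting half-space $\langle \btb,\x\rangle\geqslant c$ of $NP(I)$, applying the inequality to $\x+\u$ for $\x\in NP(I)$ and $\u\in\R_+^r$ gives $\langle \btb,\u\rangle\geqslant 0$, forcing $\btb\in\R_+^r\setminus\{0\}$. Since $NP(I)\subseteq\R_+^r$ and $\btb\geqslant 0$, we also have $c\geqslant 0$. I would then split into two cases. If $c=0$, the facet lies in $\{\langle \btb,\x\rangle=0\}\cap\R_+^r=\bigcap_{i:\,\btb_i>0}\{x_i=0\}$; this subspace has dimension $r-|\{i:\btb_i>0\}|$, so being a facet (dimension $r-1$) forces $\btb$ to be a positive multiple of some $\e_i$, giving exactly the coordinate inequality $x_i\geqslant 0$. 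If $c>0$, I rescale so that $c=1$, obtaining an inequality $\langle \a_j,\x\rangle\geqslant 1$ with $\a_j\in\R_+^r\setminus\{0\}$; rationality $\a_j\in\Q^r$ follows from the fact that the supporting hyperplane is spanned over $\Q$ by vertices in $E(G(I))\subseteq\N^r$ together with coordinate vectors.

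Finally I would verify the combinatorial count for a facet $F=NP(I)\cap\{\langle \a_j,\x\rangle=1\}$. Set $s_j:=|\{i:(\a_j)_i>0\}|$. For $i$ with $(\a_j)_i=0$ and any $\x\in F$, the point $\x+\e_i$ is still in $NP(I)$ and still satisfies the equation, so $F$ is parallel to exactly the $r-s_j$ basis vectors $\{\e_i:(\a_j)_i=0\}$; this contributes a recession cone of dimension $r-s_j$ to $F$. The extreme points of $NP(I)$ form a subset of $\{\alb_1,\ldots,\alb_p\}=E(G(I))$ because adding $\R_+^r$ cannot create new vertices, so every vertex of $F$ sits in $E(G(I))$. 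Writing $F$ as the Minkowski sum of the convex hull of its vertices with its recession cone and taking dimensions yields
$$r-1=\dim F=(\text{aff.\ dim of vertices of }F)+(r-s_j),$$
so the vertices of $F$ span an affine subspace of dimension $s_j-1$; equivalently, $F$ contains exactly $s_j$ affinely independent points of $E(G(I))$, which is the assertion.

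The main obstacle is the bookkeeping in this last dimension count: one has to be careful that the vertices of $F$ are really vertices of $NP(I)$ (hence lie in $E(G(I))$) and that the recession cone of $F$ is precisely the face of $\R_+^r$ spanned by $\{\e_i:(\a_j)_i=0\}$, so that Minkowski sum of vertices and recession rays produces all of $F$ without overcounting. Everything else reduces to standard polyhedral facts applied to the concrete form $NP(I)=\conv(E(G(I)))+\R_+^r$.
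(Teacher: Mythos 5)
The paper gives no argument here: Lemma \ref{NPH} is proved by a single citation to Lemma~1.2 of \cite{HT1}. Your self-contained polyhedral proof is therefore a genuinely different route, and its overall structure is sound: the decomposition $NP(I)=\conv(E(G(I)))+\R_+^r$, the Minkowski--Weyl external description, the observation that the recession cone $\R_+^r$ forces any facet normal $\btb$ into $\R_+^r\setminus\{0\}$ with $c\geqslant 0$, the split into $c=0$ (yielding coordinate inequalities) and $c>0$ (rescaled to $1$), and the rationality of $\a_j$ because $\mathrm{aff}(F)$ is spanned over $\Q$ by integral vertices and canonical vectors.

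There is, however, a genuine gap in the final dimension count. You write
$$r-1=\dim F=(\text{aff.\ dim.\ of vertices of }F)+(r-s_j)$$
and conclude that the vertices span affine dimension \emph{exactly} $s_j-1$. This equality tacitly assumes that the span $L_V$ of vertex differences meets the span $L_C$ of the recession cone trivially, which need not hold. Take $I=(x_1x_2,\,x_2x_3)\subset K[x_1,x_2,x_3]$, so $E(G(I))=\{(1,1,0),(0,1,1)\}$. The facet of $NP(I)$ cut out by $x_2\geqslant 1$ has normal $\a_j=(0,1,0)$, hence $s_j=1$; yet it contains both $(1,1,0)$ and $(0,1,1)$, two affinely independent points of $E(G(I))$, so the vertices of that facet span affine dimension $1$, not $s_j-1=0$. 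What does hold is the subadditivity $\dim(L_V+L_C)\leqslant\dim L_V+\dim L_C$, giving $\dim L_V\geqslant s_j-1$, i.e.\ $F$ contains \emph{at least} $s_j$ affinely independent points of $E(G(I))$ --- and that weaker statement is all the lemma asserts. So the proof is repairable by replacing the equality with this inequality, but the sentence claiming the vertices span dimension exactly $s_j-1$ (``$F$ contains exactly $s_j$ affinely independent points'') is false as written.
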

\begin{proof} Follows from \cite[Lemma 1.2]{HT1}.
\end{proof}

\begin{lem} \label{I1} $\overline{I} \cdot \overline{J}\subseteq \overline{IJ}$ for all monomial ideals $I$ and $J$ of $R$.
\end{lem}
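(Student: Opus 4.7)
The plan is to exploit the fact that, since $I$ and $J$ are monomial ideals, so are $\overline{I}$, $\overline{J}$, and $\overline{IJ}$. Consequently it suffices to check the containment on monomial generators: for every monomial $\x^{\alb}\in\overline{I}$ and every monomial $\x^{\btb}\in\overline{J}$, I need to show $\x^{\alb+\btb}\in\overline{IJ}$.

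For this, I would apply the characterization recalled in (\ref{EN1}): $\x^{\alb}\in\overline{I}$ iff $\alb\in\N^r$ and $\x^{n\alb}\in I^n$ for some $n\geqslant 1$, and similarly $\x^{\btb}\in\overline{J}$ iff $\x^{m\btb}\in J^m$ for some $m\geqslant 1$. Raising the first membership to the $m$-th power and the second to the $n$-th power, I get $\x^{nm\alb}\in I^{nm}$ and $\x^{nm\btb}\in J^{nm}$, so
\[
\x^{nm(\alb+\btb)}=\x^{nm\alb}\cdot\x^{nm\btb}\in I^{nm}J^{nm}=(IJ)^{nm}.
\]
Applying (\ref{EN1}) in the reverse direction with the exponent $nm$ then yields $\x^{\alb+\btb}\in\overline{IJ}$, as desired.

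Alternatively, and equivalently, one can argue via Newton polyhedra: since $E(I)+E(J)\subseteq E(IJ)$, taking convex hulls and adding $\R_+^r$ gives $NP(I)+NP(J)\subseteq NP(IJ)$; then $\alb\in NP(I)$ and $\btb\in NP(J)$ force $\alb+\btb\in NP(IJ)$, which by (\ref{EN1}) is the conclusion. There is no real obstacle here — the only thing to be careful about is the reduction to monomial generators and the choice of a common exponent $nm$, both of which are routine.
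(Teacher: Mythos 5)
Your argument is correct and is essentially the same as the paper's: take monomials in the two integral closures, use the characterization in (\ref{EN1}) to find powers landing in $I^n$ and $J^m$, pass to the common exponent $nm$, multiply, and apply (\ref{EN1}) again. The paper just writes the monomials as $u,v$ rather than $\x^{\alb},\x^{\btb}$; the Newton-polyhedron remark you add at the end is a harmless alternative phrasing of the same fact.
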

\begin{proof} Let $f \in \overline{I} \cdot \overline{J}$ be a monomial, so that $f=uv$ for some monomials $u\in \overline{I}$ and $v\in \overline{J}$. By Equation $(\ref{EN1})$, one has $u^m \in I^m$ and $v^n\in J^n$ for some positive integers $m$ and $n$. It follows that $u^{mn}\in I^{mn}$ and $v^{mn}\in J^{mn}$, and therefore $f^{mn} = u^{mn}v^{mn}\in I^{mn}J^{mn} = (IJ)^{mn}$. By Equation ($\ref{EN1}$) again, we get $f\in\overline{IJ}$. It follows that $\overline I \cdot \overline J \subseteq{\overline{IJ}}$, as required.
\end{proof}

For a subset $F\subseteq [r]$, we set $R_F=R[x_i^{-1}\mid i\in F]$ and $I_F = IR_F \cap R$. Obviously,
$$(\overline{I^n})_F = \overline{(I_F)^n}, \text{ for all } n\geqslant 1.$$

The following result allows us to do induction on the number of variables when studying the associated primes of monomial ideals.

\begin{lem}\label{DI}\cite[Lemma 11]{Tr1} Let $I$ be a monomial ideal of $R$. Then,
$$\ass(R/\overline{I^n}) \setminus \{\mi\} = \bigcup_{i=1}^n \ass(R/\overline{(I_{\{i\}})^n}), \text{ for all } n\geqslant 1.$$
\end{lem}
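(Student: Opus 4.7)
The plan is to reduce the statement to a localization argument at a single variable. The core observation is that the associated primes of any monomial ideal are monomial primes, i.e.\ of the form $(x_j \mid j \in S)$ for some $S \subseteq [r]$. Hence if $\p \in \ass(R/\overline{I^n})$ and $\p \ne \mi$, then $\p$ must avoid at least one variable $x_i$, and this is precisely the $i$ on the right-hand side that will witness $\p$.

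First I would record two facts that make the localization mechanical. (a) Integral closure commutes with localization, so $\overline{I^n}\cdot R_{\{i\}} = \overline{(IR_{\{i\}})^n}$; this follows from the integral-dependence equation by clearing denominators, since a unit of $R_{\{i\}}$ can be absorbed into the coefficients. (b) By definition $I_{\{i\}} = IR_{\{i\}} \cap R$, and contraction-then-extension returns the same localized ideal, so $I_{\{i\}} R_{\{i\}} = I R_{\{i\}}$ and consequently $\overline{(I_{\{i\}})^n}\cdot R_{\{i\}} = \overline{I^n}\cdot R_{\{i\}}$. Combining (a) and (b), the ideals $\overline{I^n}$ and $\overline{(I_{\{i\}})^n}$ have the same associated primes among those that do not contain $x_i$.

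For the inclusion $\subseteq$, take $\p \in \ass(R/\overline{I^n}) \setminus \{\mi\}$, write $\p = (x_j \mid j \in S)$ with $S \subsetneq [r]$, and choose $i \in [r]\setminus S$ so that $x_i \notin \p$. By the standard bijection $\p \leftrightarrow \p R_{\{i\}}$ between associated primes of $R/J$ avoiding $x_i$ and associated primes of $R_{\{i\}}/JR_{\{i\}}$, applied with $J = \overline{I^n}$ and then with $J = \overline{(I_{\{i\}})^n}$, we read off $\p \in \ass(R/\overline{(I_{\{i\}})^n})$. Conversely, given $\p \in \ass(R/\overline{(I_{\{i\}})^n})$, observe that the minimal monomial generators of $I_{\{i\}}$ do not involve $x_i$ (they are obtained from those of $I$ by deleting the $x_i$-factor), so $\overline{(I_{\{i\}})^n}$ is extended from a monomial ideal of $K[x_j \mid j \ne i]$; thus every associated prime avoids $x_i$, which already forces $\p \ne \mi$. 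Running the bijection backward with the help of facts (a) and (b) yields $\p \in \ass(R/\overline{I^n})$.

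The main technical point to pin down carefully is fact (a), the commutation of integral closure with localization; the rest is a routine application of the behavior of associated primes under localization together with the elementary combinatorics of monomial generators. Once (a) and (b) are in place, the equality follows by simply tracking in which direction the bijection is being used.
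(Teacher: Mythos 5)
Your proof is correct and is the standard localization argument for such results. Note, however, that the paper itself does not prove Lemma~\ref{DI}: it is quoted directly from \cite[Lemma 11]{Tr1}, so there is no internal proof to compare against. Your two prerequisites are exactly the right ones: (a) that integral closure commutes with localization, so $\overline{I^n}R_{\{i\}} = \overline{(IR_{\{i\}})^n}$, and (b) that $I_{\{i\}}R_{\{i\}} = IR_{\{i\}}$ by the extension--contraction identity for localizations. Combined with the standard bijection
$$\ass_{R_{\{i\}}}\bigl(R_{\{i\}}/JR_{\{i\}}\bigr) = \{\p R_{\{i\}} \mid \p\in\ass_R(R/J),\ x_i\notin\p\},$$
valid because $R$ is Noetherian and $R/\overline{I^n}$, $R/\overline{(I_{\{i\}})^n}$ are finitely generated, the two inclusions follow exactly as you describe. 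Your observation in the converse direction, that $\overline{(I_{\{i\}})^n}$ is extended from $K[x_j\mid j\neq i]$ (since $I_{\{i\}}$ has no generator involving $x_i$, and integral closure preserves this), so that every associated prime automatically avoids $x_i$ and hence is not $\mi$, is the one step that requires genuine care and you handled it correctly. One small remark: the union in the displayed statement should run over $i=1,\ldots,r$ (the variables), not $i=1,\ldots,n$; this is clearly a typo in the paper, since the usage in the proof of Theorem~\ref{T3} has the union over $i=1,\ldots,r$, and your proof correctly treats it that way.
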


\section{Binomial expansion of integral closures of powers of ideals}

In this section, we let $A=K[x_1,\ldots,x_s]$, $B = K[y_1,\ldots,y_t]$ and 
$$R = A \otimes_K B = K[x_1,\ldots,x_s,y_1,\ldots,y_t],$$
be polynomial rings over a field $K$, where $\{x_1,\ldots,x_s\}$ and $\{y_1,\ldots,y_t\}$ are two disjoint sets of variables. For simplicity, with two ideals $I$ of $A$ and $J$ of $B$ we still denote $IR$ and $JR$ by $I$ and $J$, respectively.  

Since we have the binomial expansions for ordinary powers and symbolic powers (see \cite[Theorem 3.4]{HHTT}), it is natural to ask whether or not we have the same result for integral closures of powers of monomial ideals, namely:
\begin{equation}\label{BE}
\overline{(I+J)^n} = \sum_{i=0}^n \overline{I^i}\cdot \overline{J^{n-i}}.
\end{equation}

The aim of this section is to prove that we have this expansion if $I$ is normally torsion-free. Namely,

\begin{thm}\label{T0} Let $I$ be a square-free monomial ideal in $A$ and $J$ a monomial ideal in $B$. Assume that $I$ is normally torsion-free. Then,
$$\overline{(I+J)^n} = \sum_{i=0}^n I^i \overline{J^{n-i}}.$$
\end{thm}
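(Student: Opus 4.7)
The plan is to prove both inclusions; only the containment $\subseteq$ is nontrivial. For $\supseteq$, one has $I^i \subseteq \overline{I^i}$ and, by Lemma \ref{I1}, $\overline{I^i}\cdot\overline{J^{n-i}} \subseteq \overline{I^i J^{n-i}}$, while $I^i J^{n-i} \subseteq (I+J)^n$ gives $\overline{I^i J^{n-i}} \subseteq \overline{(I+J)^n}$; chaining these and summing over $0 \leqslant i \leqslant n$ yields the containment. This direction does not require normal torsion-freeness.

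For $\subseteq$, both sides are monomial ideals, so it suffices to treat a monomial $\x^{\alb}\y^{\btb} \in \overline{(I+J)^n}$. By the definition of integral closure there is an integer $M \geqslant 1$ with $\x^{M\alb}\y^{M\btb} \in (I+J)^{nM} = \sum_{k=0}^{nM} I^k J^{nM-k}$. Because the right-hand side is a sum of monomial ideals, the given monomial must lie in at least one summand $I^k J^{nM-k}$, and since $I$ and $J$ involve disjoint sets of variables, this forces $\x^{M\alb} \in I^k$ and $\y^{M\btb} \in J^{nM-k}$ for some $k$ with $0 \leqslant k \leqslant nM$.

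The central step is to set $i = \lceil k/M \rceil$, so that $0 \leqslant i \leqslant n$, and to verify that $\x^{\alb} \in I^i$ and $\y^{\btb} \in \overline{J^{n-i}}$. By the normal torsion-freeness of $I$, formula $(\ref{s-power})$ gives
\[
I^k = I^{(k)} = \bigcap_{F\in\F(\Delta(I))} (x_j \mid j \notin F)^k,
\]
so $\x^{M\alb} \in I^k$ amounts to $\sum_{j\notin F} M\al_j \geqslant k$ for every facet $F$, i.e.\ $\sum_{j\notin F} \al_j \geqslant k/M$. The left-hand side is a nonnegative integer, hence in fact $\geqslant \lceil k/M \rceil = i$, and the same description yields $\x^{\alb} \in I^{(i)} = I^i$. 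For the $y$-part, $(\ref{EN1})$ and $(\ref{EN2})$ applied to $\y^{M\btb} \in J^{nM-k}$ give $M\btb \in (nM-k)NP(J)$, hence $\btb \in (n - k/M)NP(J)$; since $n - k/M \geqslant n - i$ and $r\cdot NP(J) \subseteq s\cdot NP(J)$ whenever $r \geqslant s \geqslant 0$, we conclude $\btb \in (n-i)NP(J)$, i.e.\ $\y^{\btb} \in \overline{J^{n-i}}$. Multiplying these memberships gives $\x^{\alb}\y^{\btb} \in I^i \overline{J^{n-i}}$, finishing the inclusion.

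The main obstacle is the integer-rounding step above, which is the only place where the normal torsion-freeness of $I$ is used essentially: the hypothesis forces the facet-defining inequalities of $NP(I)$ to have coefficients in $\{0,1\}$, so $\sum_{j\notin F}\al_j$ is automatically an integer and the ceiling $\lceil k/M\rceil$ can be inserted without breaking the containment $\x^{\alb} \in I^i$. Without this hypothesis Lemma \ref{NPH} only produces rational facet normals $\a_j$ for $\overline{I^k}$, and the corresponding inner products $\langle \a_j, \alb\rangle$ may be non-integer rationals for which no analogous rounding is available.
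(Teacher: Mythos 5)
Your proof is correct and follows essentially the same strategy as the paper's: pass to a power $M$ with $\x^{M\alb}\y^{M\btb}\in (I+J)^{nM}$, split the monomial across the disjoint variable sets, use the identification $I^k=I^{(k)}$ (from normal torsion-freeness, giving facet inequalities with $\{0,1\}$ coefficients so the integrality/rounding step goes through), and control the $y$-part via the Newton polyhedron. The only differences are cosmetic bookkeeping --- the paper fixes $k=\max\{j\mid u\in I^j\}$ and derives $p\leqslant mk$ directly, whereas you round $k/M$ up and check both memberships --- and your stated monotonicity $r\cdot NP(J)\subseteq s\cdot NP(J)$ should require $s>0$, with the degenerate case $i=n$ disposed of trivially by $\overline{J^0}=R$.
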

\begin{proof} By Lemma \ref{I1} we have 
$$I^i \overline{J^{n-i}} = \overline{I^i}\cdot \overline{J^{n-i}} \subseteq \overline{I^i J^{n-i}} \subseteq \overline{(I+J)^n}, \text{ for all } i = 0,\ldots,n,$$
so that 
$$ \sum_{i=0}^n I^i \overline{J^{n-i}} \subseteq \overline{(I+J)^n}.$$

In order to prove the reverse inclusion, let $f \in \overline{(I+J)^n}$ be a monomial, so that $f = uv$ where $u$ and $v$ are monomials with $u\in A$ and $v\in B$.  Let $\Delta$ be the simplicial complex on $[r]$ corresponding to the square-free monomial ideal $I$. Suppose that $k$ is the largest integer such that $u\in I^k$.  

From Formula (\ref{s-power}) we have 
$$I^j = I^{(j)} =\bigcap_{F\in \F(\Delta)} (x_i\mid i\notin F)^j, \text{ for all } j\geqslant 1.$$ 

It follows that for any $j\geqslant 1$ and $\btb = (\beta_1,\ldots,\beta_s)\in\N^s$, we have
\begin{equation}\label{in-power}
\x^{\btb} \in I^j \ \text{ if and only if } \sum_{i\notin F} \beta \geqslant j, \ \text{ for all } F\in\F(\Delta).
\end{equation}

Assume that $u = \x^{\alb}$ for $\alb = (\alpha_1,\ldots,\alpha_s)\in\N^s$. By the maximality of $k$, Formula (\ref{in-power}) deduces that there is $F \in \F(\Delta)$ such that
\begin{equation}\label{max-hp}
\sum_{i\notin F} \alpha_i = k.
\end{equation}

Since $uv \in \overline{(I+J)^n}$, by Equation $(\ref{EN1})$, one has $f^m \in (I+J)^{mn}$ for some $m\geqslant 1$. Hence, $u^m\in I^p$ and $v^m \in J^{mn-p}$, for some $0\leqslant p \leqslant mn$.

On the other hand, since $u^m\in I^p$, by Equations (\ref{in-power}) and (\ref{max-hp}) we deduce that
$$km = \sum_{i\notin F} m\alpha_i \geqslant p.$$
Thus, $v^m \in J^{mn - p} \subseteq J^{mn - km} = (J^{n-k})^m$. Together with Lemma \ref{EN1}, it yields $v \in \overline{J^{n-k}}$. Therefore, $uv \in I^k \overline{J^{n-k}}$, and therefore
$$\overline{(I+J)^n} \subseteq \sum_{i=0}^n I^i \overline{J^{n-i}},$$
and the theorem follows.
\end{proof}

The assumption that $I$ is normally torsion-free in Theorem \ref{T0} is crucial. In fact, we cannot have the expansion $(\ref{BE})$ even when $I$ and $J$ are edge ideals as the following example.

\begin{exm} Let $I = (ab,bc,ca)$ be an ideal of $A = K[a,b,c]$ and $J = (xy,yz,zx)$ an ideal of $B = K[x,y,z]$. 

Note that $I$ and $J$ are edge ideals of disjoint triangles.  Direct computation with Macaulay2 shows that $\overline{(I+J)^3} \ne \sum_{i=0}^3 \overline{I^i}\cdot \overline{J^{n-i}}$.
\end{exm}

Theorem \ref{T0} has two following interesting consequences, they play a key role in the paper.

\begin{prop} \label{T1} Let $I$ be a square-free monomial ideal of $A$ and $J$ a monomial ideal of $B$. Assume  that $I$ is normally torsion-free. Then, for all $n\geqslant 1$, we have
$$\ass (R/\overline{(I+J)^n}) = \{\p + \q \mid \p\in \ass(R/I) \text{ and } \q\in \ass R/\overline{J^n}\}.$$
In particular, $\overline {\astab}(I + J) = \overline {\astab}(J)$.
\end{prop}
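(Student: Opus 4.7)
The plan is to leverage Theorem \ref{T0}, which expresses
$$\overline{(I+J)^n} = \sum_{i=0}^n I^i \overline{J^{n-i}},$$
and extract from it a clean combinatorial criterion for monomial membership. Since $A$ and $B$ use disjoint variables, each summand $I^i\overline{J^{n-i}}$ is a monomial ideal generated by products $u'v'$ with $u'\in I^i$ and $v'\in\overline{J^{n-i}}$. Writing a monomial $m\in R$ as $m=uv$ with $u\in A$ and $v\in B$, and setting
$$\alpha(u)=\max\{k:u\in I^k\},\qquad \beta(v)=\max\{l:v\in\overline{J^l}\},$$
I will obtain the criterion
$$m\in\overline{(I+J)^n}\iff \alpha(u)+\beta(v)\geqslant n.$$

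The central technical ingredient will be that $\alpha$ and $\beta$ each increment by at most one under multiplication by a single variable: $\alpha(x_iu)\leqslant\alpha(u)+1$ and $\beta(y_jv)\leqslant\beta(v)+1$. For $\alpha$ this is immediate from normal torsion-freeness of $I$, since $I^k=\bigcap_{\p}\p^k$ gives $\alpha(u)=\min_{\p\in\ass(A/I)}\deg_\p(u)$ and $\deg_\p$ can increase by at most one when multiplying by a variable. For $\beta$ I will use Lemma \ref{NPH}: every non-coordinate defining inequality $\langle a,x\rangle\geqslant 1$ of $NP(J)$ has each coordinate of $a$ bounded by $1$, because such a facet must contain a lattice vertex $\gamma\in E(G(J))$ with $\gamma_i\geqslant 1$ whenever the facet is not parallel to $\e_i$, forcing $a_i\leqslant 1/\gamma_i\leqslant 1$.

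To prove $\ass(R/\overline{(I+J)^n})\subseteq\{\p+\q : \p\in\ass(A/I),\ \q\in\ass(R/\overline{J^n})\}$, I take $P=\p+\q$ in the left-hand side and a monomial witness $m=uv$ with $(\overline{(I+J)^n}:m)=P$. Combining the membership criterion with the increment bounds and the requirement that $wm\in\overline{(I+J)^n}$ iff $w\in P$ (checked on the variable generators of $P$ and extended to arbitrary monomials $w\notin P$), I expect to pin down $\alpha(u)+\beta(v)=n-1$ and obtain $(I^{\alpha(u)+1}:_A u)=\p$ together with $(\overline{J^{\beta(v)+1}}:_B v)=\q$; then normal torsion-freeness gives $\p\in\ass(A/I)$ and Lemma \ref{HIO} gives $\q\in\ass(R/\overline{J^n})$. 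For the reverse inclusion, given such $\p$ and $\q$, I will set $l^*=\min\{l:\q\in\ass(R/\overline{J^l})\}\leqslant n$ and $k=n-l^*+1$; choosing monomials $u,v$ with $(I^k:_A u)=\p$ and $(\overline{J^{l^*}}:_B v)=\q$, the increment bounds force $\alpha(u)=k-1$ and $\beta(v)=l^*-1$, and a direct verification via the criterion should yield $(\overline{(I+J)^n}:uv)=\p+\q$. Finally $\overline{\astab}(I+J)=\overline{\astab}(J)$ will follow because the resulting set of associated primes depends on $n$ only through $\ass(R/\overline{J^n})$.

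The hard part will be the careful index bookkeeping in both inclusions: extending the condition $(\overline{(I+J)^n}:m)=P$ from the variable generators of $P$ to arbitrary monomial multipliers $w\notin P$ demands the sharp increment bounds for $\alpha$ and $\beta$, so normal torsion-freeness of $I$ and the polyhedral structure of $NP(J)$ will be used in tandem there.
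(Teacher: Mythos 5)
Your outline is correct, but it proves the proposition by a genuinely different route than the paper. The paper never works with individual monomials: it filters $R/\overline{(I+J)^n}$ by the ideals $Q_k=\overline{(I+J)^k}$, uses the binomial expansion to identify $Q_k/Q_{k+1}$ with $\bigoplus_j (I^j/I^{j+1})\otimes_K(\overline{J^{k-j}}/\overline{J^{k-j+1}})$, imports $\ass(I^j/I^{j+1})=\ass(A/I)$ and $\ass(\overline{J^{k-j}}/\overline{J^{k-j+1}})=\ass(B/\overline{J^{k-j+1}})$ from the literature, and then induces on $n$ via the short exact sequence $0\to Q_{n-1}/Q_n\to R/Q_n\to R/Q_{n-1}\to 0$ together with Lemma \ref{HIO}. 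You instead convert Theorem \ref{T0} into the pointwise criterion $uv\in\overline{(I+J)^n}\iff\alpha(u)+\beta(v)\geqslant n$ and chase colon ideals of monomials directly. Your two increment bounds are both sound: $\alpha(x_iu)\leqslant\alpha(u)+1$ follows from $I^k=I^{(k)}=\bigcap_{\p}\p^k$, and $a_i\leqslant 1$ for every non-coordinate facet normal of $NP(J)$ does follow from Lemma \ref{NPH} (if every point of $E(G(I))$ on the facet had $i$-th coordinate zero, the affine hull of the facet would be the coordinate hyperplane $x_i=0$, which cannot equal $\left<\a,\x\right>=1$), and that bound gives $\beta(y_jv)\leqslant\beta(v)+1$; these force $\alpha(u)+\beta(v)=n-1$ at a witness and yield $(I^{\alpha(u)+1}:_Au)=\p$ and $(\overline{J^{\beta(v)+1}}:_Bv)=\q$ exactly as you predict, with Lemma \ref{HIO} finishing the forward inclusion and your choice $k=n-l^*+1$ handling the converse. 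What the paper's approach buys is brevity (given the cited results of H\`a--Hop--Trung--Trung, Morey--Villarreal and \cite{Tr1}) and a template reused verbatim for the depth statement in Proposition \ref{T2}; what yours buys is a self-contained argument that also localizes the prime, exhibiting the exact power $l+1\leqslant n$ at which $\q$ first appears for the given witness, essentially re-deriving \cite[Proposition 4]{Tr1} from the polyhedral description rather than quoting it. One small point to make explicit when writing it up: you should note that a monomial prime $P=\p+\q$ with $P=(\overline{(I+J)^n}:uv)$ necessarily has $\p\neq 0$ and $\q\neq 0$ (otherwise multiplying $u$, resp.\ $v$, by high powers of generators of $I$, resp.\ $J$, would place a monomial outside $P$ into the colon ideal), since your equality $\alpha(u)+\beta(v)=n-1$ is extracted from one variable of each type.
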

\begin{proof} For each integer $k\geqslant 0$, set $Q_k = \overline{(I+J)^k}$. By Theorem \ref{T0}, we have
\begin{equation}\label{sum-binom}
Q_k = \sum_{j=0}^k I^j \overline{J^{k-j}}.
\end{equation}

We first claim that
\begin{equation}\label{Q-ass}
\ass(Q_k/Q_{k+1}) = \left\{ \p + \q \mid \p \in \ass(A/I)  \text{ and } \q \in \ass(B/\overline{J^{k+1}})   \right\}.
\end{equation}

Indeed, by Equation (\ref{sum-binom}) and \cite[Proposition 3.3]{HHTT}, we get
\begin{equation}\label{Q0}
Q_k/Q_{k+1} \cong \bigoplus_{j=0}^k (I^j/I^{j+1}) \otimes_K (\overline{J^{k-j}}/\overline{J^{k-j+1}}).
\end{equation}

Observe that if $\p \in \ass(I^j/I^{j+1})$ and $\q \in \ass\left(\overline{J^{k-j}}/\overline{J^{k-j+1}}\right)$, then they are monomial ideals. It follows that $\p$ and $\q$ are generated by variables of $A$ and $B$ respectively, so that $\p +\q$ is a prime ideal of $R$. By combining  Equation $(\ref{Q0})$ and  \cite[Theorem 2.5(ii)]{HHTT}, this fact deduces that
 \begin{equation}\label{Q1}
\ass(Q_k/Q_{k+1}) = \bigcup_{j=0}^k \left\{ \p + \q \mid \p \in \ass(I^j/I^{j+1})  \text{ and } \q \in \ass\left(\overline{J^{k-j}}/\overline{J^{k-j+1}}\right)   \right\}.
\end{equation}

Note that $\ass(R/I^{j+1}) = \ass(R/I)$ since $I$ is normally torsion-free. Together with \cite[Lemma 4.4]{MV} , this fact gives
\begin{equation}\label{Q2}
\ass(I^j/I^{j+1})  = \ass(A/I^{j+1}) = \ass(A/I).
\end{equation}

By \cite[Proposition 4]{Tr1} we have
\begin{equation}\label{Q3}
\ass\left(\overline{J^{k-j}}/\overline{J^{k-j+1}}\right) = \ass\left(B/\overline{J^{k-j+1}}\right)
\end{equation}

From Equations $(\ref{Q1})-(\ref{Q3})$ and Lemma \ref{HIO} we obtain
$$\ass(Q_k/Q_{k+1}) = \left\{ \p + \q \mid \p \in \ass(A/I)  \text{ and } \q \in \ass(B/\overline{J^{k+1}})   \right\},$$
and the claim follows.

We now prove the proposition by induction on $n$. If $n=1$, then by Theorem \ref{T0} we have $\overline{I+J} = I + \overline{J}$. Thus,
$$R/(I+J) \cong A/I \otimes_K B/\overline{J},$$
and thus the proposition follows from \cite[Theorem 2.5(ii)]{HHTT}.

Assune that $n > 1$. From the exact sequence
$$0\longrightarrow Q_{n-1}/Q_n \longrightarrow R/Q_n \longrightarrow R/Q_{n-1}\longrightarrow 0,$$
we get
\begin{equation}\label{H0}
\ass(Q_{n-1}/Q_n)\subseteq \ass(R/Q_n) \subseteq \ass(Q_{n-1}/Q_n) \cup \ass(R/Q_{n-1}).
\end{equation}

On the other hand, by the induction hypothesis we have
$$\ass(R/Q_{n-1}) =  \{\p + \q \mid \p\in \ass(R/I) \text{ and } \q\in \ass R/\overline{J^{n-1}}\}.$$

Together with Claim (\ref{Q-ass}) and Lemma \ref{HIO}, this equation implies
$$\ass(R/Q_{n-1}) \subseteq \ass(Q_{n-1}/Q_n)= \{\p + \q \mid p\in \ass(R/I) \text{ and } \q\in \ass R/\overline{J^{n}}\}.$$

By combining this inclusion with Equation (\ref{H0}), we obtain
$$\ass(R/Q_n) = \ass(Q_{n-1}/Q_n)= \{\p + \q \mid \p\in \ass(R/I) \text{ and } \q\in \ass R/\overline{J^{n}}\},$$
and the proof is complete.
\end{proof}

\begin{prop}\label{T2}  Let $I$ be a square-free monomial ideal of $A$ and $J$ a monomial ideal of $B$. Assume that that $I$ is normally torsion-free. Then, for all $n\geqslant 1$, we have
\begin{align*}
\depth &R/\overline{(I+J)^n} \\
&= \min\limits_{i\in[n-1], \ j\in[n]}\{\depth A/I^{n-i} + \depth B/\overline{J^i}+1, \depth A/I^{n-j+1} + \depth B/\overline{J^j}\}.
\end{align*}
\end{prop}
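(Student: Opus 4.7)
The plan is to mimic the proof of \cite[Theorem 4.3]{HHTT} for ordinary powers of sums of ideals, with our Theorem \ref{T0} supplying the binomial-type expansion in the integral-closure setting. Set $Q_k = \overline{(I+J)^k}$ for $k \geq 0$. By exactly the derivation that produced Equation (\ref{Q0}) in the proof of Proposition \ref{T1}, Theorem \ref{T0} yields the $R$-module decomposition
$$Q_{k-1}/Q_k \cong \bigoplus_{i=0}^{k-1} (I^i/I^{i+1}) \otimes_K (\overline{J^{k-1-i}}/\overline{J^{k-i}}) \qquad (k\geq 1).$$
Because the $x$- and $y$-variables are disjoint, each summand is an external tensor product over $K$, and its $R$-depth splits as $\depth_A(I^i/I^{i+1}) + \depth_B(\overline{J^{k-1-i}}/\overline{J^{k-i}})$ by the standard tensor-product-of-depths formula (see \cite[Lemma 2.2]{HHTT}); the depth of the finite direct sum is then the minimum of the summand depths.

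The next step is to convert each factor into the form that appears in the target formula. For this I use the short exact sequences
$$0 \to I^i/I^{i+1} \to A/I^{i+1} \to A/I^i \to 0 \quad \text{and} \quad 0 \to \overline{J^j}/\overline{J^{j+1}} \to B/\overline{J^{j+1}} \to B/\overline{J^j} \to 0,$$
combined with the normal torsion-freeness of $I$ (which via (\ref{Q2}) gives $\ass(A/I^{j})=\ass(A/I)$ for all $j\geq 1$) and the analogous monotonicity of $\ass(B/\overline{J^j})$ (Lemma \ref{HIO} together with \cite[Proposition 4]{Tr1}). These sequences allow me to translate $\depth_A(I^i/I^{i+1})$ and $\depth_B(\overline{J^j}/\overline{J^{j+1}})$ into expressions in $\depth A/I^{i+1}, \depth A/I^{i}$ and $\depth B/\overline{J^{j+1}}, \depth B/\overline{J^j}$, respectively, following the pattern of \cite[Section 2]{HHTT}.

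I then induct on $n$. The base case $n=1$ is immediate: Theorem \ref{T0} gives $\overline{I+J}= I+\overline{J}$, so $R/(I+\overline{J}) \cong A/I \otimes_K B/\overline{J}$, whence $\depth R/(I+\overline{J}) = \depth A/I + \depth B/\overline{J}$, which is exactly the claimed minimum (with $i$ ranging over the empty set and $j=1$). For the inductive step, I apply the depth lemma to
$$0 \to Q_{n-1}/Q_n \to R/Q_n \to R/Q_{n-1} \to 0,$$
combining the value of $\depth(Q_{n-1}/Q_n)$ computed above with the inductively known value of $\depth R/Q_{n-1}$. The main obstacle I anticipate is extracting an equality, rather than just one-sided estimates, from the depth lemma: this requires a careful case analysis comparing $\depth(Q_{n-1}/Q_n)$ with $\depth R/Q_{n-1}$, and it is precisely this bookkeeping that produces the two flavours of terms in the final minimum---the $+1$ shift coming from the submodule position in the short exact sequence, and the unshifted term coming from the quotient. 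This delicate analysis, already carried out in \cite[Theorem 4.3]{HHTT}, should transfer essentially verbatim to the integral-closure setting once the ingredients above are in place.
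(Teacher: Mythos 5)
The paper's actual proof of Proposition~\ref{T2} is a one-liner: after establishing the binomial expansion $\overline{(I+J)^n}=\sum_{i=0}^n I^i\overline{J^{n-i}}$ (Theorem~\ref{T0}), it simply invokes the ready-made depth formulas of \cite[Theorems 5.3 and 5.10]{HHTT}, which are stated for an arbitrary decreasing family of ideals admitting such a binomial-type expansion (not just for ordinary or symbolic powers). Your proposal instead attempts to reconstruct that general machinery from scratch: you recover the module decomposition of $Q_{k-1}/Q_k$ via Theorem~\ref{T0}, compute depths of the tensor summands using the external tensor product formula, and then try to propagate this through the exact sequences $0\to Q_{n-1}/Q_n\to R/Q_n\to R/Q_{n-1}\to 0$ by induction on $n$. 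The ingredients you list (the decomposition, the tensor-depth formula, the depth lemma) are the right ones, and the base case is handled correctly.

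However, there is a genuine gap in your inductive step, and you essentially acknowledge it yourself: the depth lemma applied to a short exact sequence gives only one-sided inequalities, and extracting the exact equality claimed in Proposition~\ref{T2} requires a careful comparison of $\depth(Q_{n-1}/Q_n)$ with $\depth(R/Q_{n-1})$, together with the observation that these depths are monotone (thanks to $\ass(A/I^j)=\ass(A/I)$ from normal torsion-freeness and $\ass(B/\overline{J^j})\subseteq\ass(B/\overline{J^{j+1}})$ from Lemma~\ref{HIO}). You defer this to ``the delicate analysis already carried out in \cite[Theorem 4.3]{HHTT}'' and assert it ``should transfer essentially verbatim,'' but this is precisely the nontrivial content, and your write-up does not verify that the analysis does carry over, nor does it cite the correct theorems. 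The clean way to close the gap is to do what the paper does: notice that \cite[Theorems 5.3 and 5.10]{HHTT} are formulated abstractly for any filtration satisfying the binomial expansion hypothesis, so that once Theorem~\ref{T0} is available they apply directly to the family $Q_n=\overline{(I+J)^n}$ with no further case analysis required on your part.
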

\begin{proof} Folows from \cite[Theorem 5.3 and 5.10]{HHTT} and Theorem \ref{T0}.
\end{proof}

\section{The index of stability}

In this section, we always assume that $R=K[x_1,\ldots,x_r]$ is a polynomial ring over a field $K$ and $\mi =(x_1,\ldots,x_r)$ is the maximal homogeneous ideal of $R$. For a graph $G$, we also assume that $V(G) = [r]$.

\begin{lem}\label{A1} Let $G$ be a connected nonbipartite graph. Let $2m-1$ be the maximum length of odd cycles of $G$. Then for any $n\geqslant \upsilon(G) -\varepsilon_0(G) - m+1$, there is a monomial $f$ of degree $2n-1$ such that
$\mi = I(G)^n: f$ and $f^2 \in I(G)^{2n-1}$.
\end{lem}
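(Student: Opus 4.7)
The plan is to construct $f$ explicitly from the structural data of $G$ and to verify $\mi = I(G)^n : f$ by encoding the required edge decompositions as signed odd closed walks in $G$.

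\emph{Setup and the monomial $f$.} Fix an odd cycle $C$ of length $2m-1$ in $G$, and choose a spanning tree $T$ of $G$ that contains $2m-2$ edges of $C$; let $H := T \cup \{e_0\}$, where $e_0$ is the remaining cycle edge, so that $C$ is the unique cycle of $H$. Since $G$ is connected and nonbipartite, the set $L$ of leaves satisfies $|L| = \varepsilon_0(G)$, and $T^* := T \setminus L$ is a spanning tree of the non-leaf subgraph of $G$. Assign weights $w_e \geq 0$ on $E(H) \setminus E(C)$ with $w_e = 0$ on leaf edges, $w_e \geq 1$ on non-leaf edges, and $\sum w_e = n - m$; this is feasible because $E(H) \setminus E(C)$ contains exactly $\upsilon(G) - \varepsilon_0(G) - (2m-1)$ non-leaf edges, which is $\leq n - m$ by hypothesis. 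Put
$$\alpha_v := [v \in V(C)] + \sum_{e \in E(H)\setminus E(C),\, v \in e} w_e, \qquad f := \prod_v x_v^{\alpha_v};$$
then $\deg f = (2m-1) + 2(n-m) = 2n-1$. Defining $c_e := 1$ for $e \in E(C)$, $c_e := 2 w_e$ for $e \in E(H) \setminus E(C)$, and $c_e := 0$ otherwise, one has $\sum c_e = 2n-1$ and $2\alpha_v = \sum_{e \ni v} c_e$, so $f^2 = \prod_{e=\{u,v\}} (x_u x_v)^{c_e} \in I(G)^{2n-1}$, while $f \notin I(G)^n$ follows from $\deg f = 2n-1 < 2n$.

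\emph{The colon condition via signed walks.} It remains to show $x_i f \in I(G)^n$ for every $i \in V(G)$, i.e.\ to produce nonnegative integers $d_e$ with $\sum d_e = n$ and $x_i f = \prod_{e=\{u,v\}} (x_u x_v)^{d_e}$. To each $i$, attach an odd closed walk $W_i$ in $G$: take $W_i = C$ if $i \in V(C)$; take $W_i = P \cdot C \cdot P^{-1}$ with $P \subset T^*$ the path from $i$ to $V(C)$ if $i$ is a non-leaf outside $V(C)$; and take $W_i = \{i,u\} \cdot P \cdot C \cdot P^{-1} \cdot \{i,u\}$, with $u$ the (necessarily non-leaf) neighbor of $i$ and $P \subset T^*$ the path from $u$ to $V(C)$, if $i$ is a leaf. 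In each case $W_i$ has odd length, and the number of traversals of an edge $e$ equals $1$ if $e \in E(C)$, $2$ if $e$ is another walk-edge, and $0$ otherwise. Assign the sign $(-1)^{k+1}$ to the $k$-th step of $W_i$ and let $h_e \in \mathbb{Z}$ be the sum of these signs over the occurrences of $e$. The alternating-signs identity for odd closed walks gives $\sum_{e \ni v} h_e = 2\delta_{iv}$ (the two steps meeting at any intermediate vertex carry opposite signs, while at $i$ the first and last signs add up) and $\sum_e h_e = 1$, while the traversal counts force $h_e$ to be odd precisely on $E(C)$, matching the parity of $c_e$. Setting $d_e := (c_e + h_e)/2$, the parity match makes each $d_e$ an integer, and a short case check (cycle edges: $c_e = 1$, $|h_e| = 1$; path edges: $c_e = 2 w_e \geq 2$, $|h_e| \leq 2$; the leaf edge in the leaf case: $c_e = 0$, $h_e = 2$; other edges: $h_e = 0$) shows $d_e \geq 0$. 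The two identities above then yield $\sum d_e = n$ and $\sum_e d_e (\e_u + \e_v) = \alpha + \e_i$, so $x_i f \in I(G)^n$. Combined with $f \notin I(G)^n$ and the maximality of $\mi$, this forces $I(G)^n : f = \mi$.

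\emph{Main obstacle.} The delicate step is designing the walk $W_i$ so that the associated signed weights $h_e$ combine with $c_e$ into a genuine nonnegative integer decomposition of $x_i f$ as a product of $n$ edges of $G$; both the parity match ($h_e \equiv c_e \pmod 2$) and the nonnegativity ($c_e + h_e \geq 0$) must hold simultaneously. The lower bound $n \geq \upsilon(G) - \varepsilon_0(G) - m + 1$ enters precisely through the need $w_e \geq 1$ on each path edge of $P$, without which the contribution $d_e = w_e + h_e/2$ could fail to be nonnegative when $h_e = -2$.
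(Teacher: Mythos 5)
Your construction is essentially correct and is genuinely different in structure from the paper's argument. The paper also passes to a unicyclic spanning subgraph $H\supseteq C$, but then it simply cites \cite[Lemma 2.3]{Tr} to obtain a base monomial $g$ of degree $2s-1$ with $x_ig\in I(H)^s$ for the minimal value $s=\upsilon(H)-\varepsilon_0(H)-m+1$, and handles general $n=s+t$ by padding: $f=u^tg$ for an arbitrary edge monomial $u$, with $f^2\in I(G)^{2n-1}$ deduced from $g^2\in I(G)^{2s-1}$ by splitting off one edge from a decomposition of $gx_1$. You instead give a self-contained, explicit construction for all $n$ at once: weights on the tree edges produce $f$, and the membership $x_if\in I(G)^n$ is verified by the alternating-sign identity for odd closed walks $P\cdot C\cdot P^{-1}$. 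This buys transparency (one sees exactly which edge decomposition witnesses $x_if\in I(G)^n$, and where the bound $n\geqslant\upsilon(G)-\varepsilon_0(G)-m+1$ is used, namely $w_e\geqslant 1$ on the $T^*$-edges so that $w_e+h_e/2\geqslant 0$), at the cost of reproving the combinatorial content that the paper outsources to \cite{Tr}. Your sign computations check out: for an odd closed walk based at $i$ one indeed gets $\sum_e h_e=1$ and $\sum_{e\ni v}h_e=2\delta_{iv}$, the two traversals of each $P$-edge carry equal signs so $h_e=\pm2$ there, and the parities of $h_e$ and $c_e$ agree edge by edge.

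There is one boundary case where your feasibility claim fails as written: if $\upsilon(G)-\varepsilon_0(G)=2m-1$, i.e.\ the non-leaf subgraph of $G$ is exactly the cycle $C$ (for instance $G$ an odd cycle, or an odd cycle with pendant edges), then $E(H)\setminus E(C)$ contains no non-leaf edges, and since you force $w_e=0$ on leaf edges there is nowhere to place the required total weight $n-m$ when $n>m$; your $f$ then has degree only $2m-1$, not $2n-1$. The lemma does assert the conclusion for all such $n$, so this family is not vacuous. The repair is immediate and is exactly the paper's padding device: allow surplus weight on one cycle edge, i.e.\ set $c_{e_1}=1+2w$ for a fixed $e_1\in E(C)$ (equivalently, replace $f$ by $(x_ux_v)^{w}f$ for $\{u,v\}=e_1$). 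The parity of $c_{e_1}$ stays odd, matching $h_{e_1}=\pm1$, and $d_{e_1}=w$ or $w+1$ remains nonnegative, so every step of your verification goes through unchanged. With that one-line patch the proof is complete.
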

\begin{proof} Let $C$ be an old cycle of $G$ with length $2m-1$. If $C'$ is another cycle of $G$, then $C'$ has an edge $e$ that does not lie on the cycle $C$. Delete this edge from $G$, thereby obtaining a connected subgraph $G'$ of $G$ with $V(G') = V(G)$ and $C$ is still a cycle of $G'$. This process continues until we obtain a connected subgraph $H$ of $G$ such that $V(G)=V(H)$ and $H$ has only one cycle $C$. Let $s =\upsilon(H)-\varepsilon_0(H)-m+1$. 

Since $H$ is a unicyclic nonbiparite graph, by \cite[Lemma $2.3$]{Tr} there is a monomial $g\in R$ such that $\deg g = 2s-1$ and
$x_ig\in I(H)^s$ for all $i=1,\ldots,r$. Because $I(H)\subseteq I(G)$, therefore
\begin{equation}\label{A11}
x_ig\in I(G)^s \text{ for all } i=1,\ldots,r.
\end{equation}
As $G$ is generated by quadric monomials and $\deg g =2s-1$, so $g\notin I(G)^s$. Thus, $\mi= I(G)^s:g$. Since $\upsilon(G)=\upsilon(H)$ and $\varepsilon_0(G) \leqslant \varepsilon_0(H)$, hence $s \leqslant \upsilon(G) -\varepsilon_0(G)-m+1$.

Write $n = s+t$ for some $t\geqslant 0$. Let us choose an quadratic monomial $u$ of $I(G)$ and let $f = u^tg$. We will prove that $f$ is a desired monomial.

Since $\deg f = 2t+\deg g = 2t+2s-1 = 2n-1$, so $f\notin I(G)^n$. Together with the fact $(\ref{A11})$, we conclude that $\mi = I(G)^n:f$.

It remains to prove that $f^2\in I(G)^{2n-1}$. Since $\deg(g)=2s-1$ and $gx_1 \in I(G)^s$, it follows that $gx_1 = f_1\cdots f_s$ where $f_1,\ldots,f_s$ are quadratic monomials of $I(G)$. Without loss of generality, we may assume that $x_1 \mid f_s$ and $f_s = x_1x_j$ for some $1\leqslant j\leqslant r$. It follows that
$$g = f_1\cdots f_{s-1} x_j.$$
Hence, $g^2 = (f_1\cdots f_{s-1})(gx_j) \in I(G)^{s-1}I(G)^s = I(G)^{2s-1}$. Therefore,
$$f^2 = u^{2t} g^2 \in I(G)^{2t}I(G)^{2s-1} = I(G)^{2t+2s-1} = I(G)^{2n-1},$$
and the proof is complete.
\end{proof}

\begin{lem} \label{A2} Let $G$ be a graph. Let $f_1,\ldots,f_{2s}$ be monomials of $R$ with $s\geqslant 1$. Assume that for all $i=1,\ldots,2s$, we have $\deg f_i = 2n_i-1$ and $f_i^2 \in I(G)^{2n_i-1}$ where $n_i\geqslant 1$. Then,
$$f_1\cdots f_{2s} \in \overline{I(G)^n}$$
where $n=n_1+\cdots+n_{2s}-s$.
\end{lem}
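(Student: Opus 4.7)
The plan is to apply the Newton polyhedron description of integral closure from Equation~(\ref{EN1}) directly. Recall that a monomial $h$ lies in $\overline{I(G)^n}$ if and only if there exists some positive integer $m$ with $h^m \in I(G)^{mn}$. So it suffices to exhibit one such witness $m$ for the product $f_1\cdots f_{2s}$.

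The natural choice is $m=2$, because the hypothesis gives control on each $f_i^2$ rather than on $f_i$ itself. I would simply compute
\[
(f_1\cdots f_{2s})^2 \;=\; f_1^2 \cdots f_{2s}^2 \;\in\; I(G)^{2n_1-1}\cdots I(G)^{2n_{2s}-1} \;=\; I(G)^{\sum_{i=1}^{2s}(2n_i-1)},
\]
using the assumption $f_i^2 \in I(G)^{2n_i-1}$ together with the fact that the product of ideals contains the product of elements chosen from them. The exponent simplifies as
\[
\sum_{i=1}^{2s}(2n_i-1) \;=\; 2(n_1+\cdots+n_{2s}) - 2s \;=\; 2n.
\]
Hence $(f_1\cdots f_{2s})^2 \in I(G)^{2n}$, and Equation~(\ref{EN1}) (applied with the witness $m=2$) yields $f_1\cdots f_{2s}\in \overline{I(G)^n}$, as desired.

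There is no real obstacle here; the whole statement is engineered so that squaring the product exactly absorbs the $-1$'s in each $\deg f_i = 2n_i - 1$, leaving a power of $I(G)$ that is compatible with the target integer $n = \sum n_i - s$. The role of the hypothesis that the number of factors is even ($2s$) is precisely to make $\sum(2n_i-1)$ a positive even integer, namely $2n$, so that the inclusion $(f_1\cdots f_{2s})^2\in I(G)^{2n}$ is exactly the $m=2$ version of the integral-dependence criterion. No further combinatorics on $G$ is needed at this step; the lemma is a purely multiplicative/polyhedral consequence of the data, to be combined later with Lemma~\ref{A1} to produce elements in $\overline{I(G)^n}\setminus I(G)^n$ witnessing that $\mi$ is an associated prime.
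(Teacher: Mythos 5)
Your proof is correct and is essentially the same Newton-polyhedron argument the paper uses, just phrased more directly: the paper writes each $f_i^2$ as a product of $2n_i-1$ quadratic generators, divides all exponent vectors by two, and invokes Equation~(\ref{EN2}) to place the resulting convex combination in $NP(I(G)^n)$, whereas you skip the explicit decomposition and simply observe $f^2\in I(G)^{2n}$ and invoke Equation~(\ref{EN1}) with witness $m=2$. The two are logically equivalent; yours is a slightly cleaner presentation of the same idea.
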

\begin{proof} For each $i=1,\ldots,2s$, since $\deg f_i^2 = 2(n_i-1)$ and $f_i^2\in I(G_i)^{2n_i-1}$, together with the fact that $I(G)$ is generated by quadratic monomials we imply that there are $(2n_i-1)$ quadratic monomials, say $\x^{\alb_{i,1}},\ldots,\x^{\alb_{i,2n_i-1}}$, of $I(G)$ such that
$$f_i^2 = \x^{\alb_{i,1}}\cdots \x^{\alb_{i,2n_i-1}}.$$

Let $f =f_1\cdots f_{2s}$. Write $f =\x^{\alb}$ with $\alb\in\N^r$. Then,
$$2\alb = \sum_{i=1}^{2s}\sum_{j=1}^{2i-1}\alb_{i,j},$$
hence
$$\alb = \sum_{i=1}^{2s}\sum_{j=1}^{2i-1}\frac{1}{2}\alb_{i,j}.$$
Since
$$\sum_{i=1}^{2s}\sum_{j=1}^{2n_i-1}\frac{1}{2} = \sum_{i=1}^{2s} \frac{2n_i-1}{2}=\sum_{i=1}^{2s}n_i -s = n,$$
by Formula ($\ref{EN2}$) we conclude that $\x^\alb \in \overline{I(G)^n}$, as required.
\end{proof}

\begin{defn}Let $G$ be a graph with connected components $G_1,\ldots,G_p$ such that all $G_1,\ldots,G_p$ are nonbipartite.  For each $i=1,\ldots,p$, let $2m_i-1$ be the maximum length of odd cycles of $G_i$. Let $2m-1$ be the minimum length of odd cycles of $G$. Let
$$n_1(G)=
\begin{cases}
\upsilon(G)-\varepsilon_0(G)-\sum_{i=1}^p m_i +s+1  & \text{ if } p = 2s+1 \text{ for  } s\geqslant 0,\\
\upsilon(G)-\varepsilon_0(G)-\sum_{i=1}^p m_i +s+m  & \text{ if } p = 2s \text{ for  } s\geqslant 1.
\end{cases}
$$
\end{defn}

Note that $\mi \in\ass(R/\overline{I(G)^n})$ for $n\geqslant 1$ if and only if every connected component of $G$ is nonbipartite (see \cite[Proposition 3.3]{MMV}).  The following lemma gives the information on how big of $n$ such that $\mi \in\ass(R/\overline{I(G)^n})$.

\begin{lem} \label{A3} If every connected component of a graph $G$ is nonbipartite, then we have $\mi\in\ass (R/\overline{I(G)^n})$  for all $n\geqslant n_1(G)$.
\end{lem}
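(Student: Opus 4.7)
The plan is to exhibit, for $n = n_1(G)$, an explicit monomial $f$ of odd degree $2n-1$ satisfying two conditions: (i) $f^2 \in I(G)^{2n-1}$, and (ii) $x_j f \in \overline{I(G)^n}$ for every $j \in V(G)$. Condition (i) forces $f \notin \overline{I(G)^n}$, since $\deg f^\ell = \ell(2n-1) < 2\ell n$ while every element of $I(G)^{\ell n}$ has degree at least $2\ell n$. Condition (ii) says $\mi \cdot f \subseteq \overline{I(G)^n}$, so that $\overline{I(G)^n}:f$ is a monomial ideal properly containing $\mi$ inside $R$; the only such ideal is $\mi$ itself. This yields $\mi \in \ass(R/\overline{I(G)^{n_1(G)}})$, and Lemma \ref{HIO} lifts the conclusion to every $n \geqslant n_1(G)$.

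To build $f$, set $k_i = \upsilon(G_i) - \varepsilon_0(G_i) - m_i + 1$ for each component $G_i$ and apply Lemma \ref{A1} to extract monomials $f_i$ on the variables of $G_i$ with $\deg f_i = 2k_i - 1$, $f_i^2 \in I(G_i)^{2k_i - 1}$, and the absorption property $x_\ell f_i \in I(G_i)^{k_i}$ for every $\ell \in V(G_i)$. If $p = 2s+1$ is odd I would take $f = f_1 \cdots f_p$ directly. The disjointness of the supports gives $\deg f = \sum_i(2k_i - 1) = 2(\sum_i k_i - s) - 1 = 2n_1(G) - 1$ and $f^2 = \prod_i f_i^2 \in I(G)^{2n_1(G) - 1}$. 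For (ii), each $j$ lies in a unique $G_i$, and
\[
(x_j f)^2 = (x_j f_i)^2 \prod_{\ell \neq i} f_\ell^2 \;\in\; I(G)^{2k_i + \sum_{\ell \neq i}(2k_\ell - 1)} \;=\; I(G)^{2n_1(G)},
\]
so $x_j f \in \overline{I(G)^{n_1(G)}}$.

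If $p = 2s$ is even, the product $f_1 \cdots f_p$ has even degree and in fact already lies in $\overline{I(G)^{\sum_i k_i - s}}$, so the bare construction fails. The fix is to absorb the parity discrepancy into a shortest odd cycle of $G$: choose an odd cycle $C$ of length $2m-1$, and set $g = \prod_{v \in V(C)} x_v$. Because every vertex of $C$ has $C$-degree $2$, the product of the $2m-1$ edge-monomials of $C$ equals $g^2$, giving $g^2 \in I(C)^{2m-1} \subseteq I(G)^{2m-1}$. Taking $f = g \cdot f_1 \cdots f_p$, I get $\deg f = (2m-1) + \sum_i(2k_i - 1) = 2n_1(G) - 1$, $f^2 \in I(G)^{2n_1(G) - 1}$, and for $j$ in the unique $G_i$ containing it,
\[
(x_j f)^2 = g^2 \cdot (x_j f_i)^2 \cdot \prod_{\ell \neq i} f_\ell^2 \;\in\; I(G)^{(2m-1) + 2k_i + \sum_{\ell \neq i}(2k_\ell - 1)} \;=\; I(G)^{2n_1(G)}.
\]

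The main obstacle I anticipate is aligning the parity and the constants in the even case. Without the cycle factor $g$, the product $f_1 \cdots f_p$ sits exactly on the Newton boundary of $\overline{I(G)^{\sum_i k_i - s}}$ rather than just outside it, so the ideal quotient is $R$ and not $\mi$; inserting $g$ supplies the missing $2m-1$ degrees of edge-ideal content that shifts the useful $n$ from $\sum_i k_i - s$ up to $\sum_i k_i - s + m = n_1(G)$. Verifying that the shortest odd cycle contributes exactly the right amount—and that such a cycle is even available, which uses the nonbipartiteness of some component—is the delicate combinatorial point.
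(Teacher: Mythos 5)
Your proof is correct and follows essentially the same construction as the paper: build the test monomial $f$ from the monomials $f_i$ supplied by Lemma \ref{A1}, inserting a squarefree shortest-odd-cycle factor $g$ to handle even $p$, then verify $\mi = \overline{I(G)^n}\colon f$ and lift via Lemma \ref{HIO}. The only (minor) deviations are streamlinings: you replace the paper's appeal to Lemma \ref{A2} with the direct observation that $h^{2}\in I^{2n}$ gives $h\in\overline{I^{n}}$, and you prove the statement at $n = n_1(G)$ and invoke Lemma \ref{HIO} rather than, as the paper does, absorbing the surplus $n - n_1(G)$ into one of the exponents $n_i$ so that $f$ is built directly for each $n$.
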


\begin{proof} Let $G_1,\ldots,G_p$ be connected components of $G$. Then, we can rewrite $n_1(G)$ as
$$n_1(G)=
\begin{cases}
\sum_{i=1}^p(\upsilon(G_i)-\varepsilon_0(G_i)-m_i) +s+1  & \text{ if } p = 2s+1 \text{ for  } s\geqslant 0,\\
\sum_{i=1}^p(\upsilon(G_i)-\varepsilon_0(G_i)-m_i) +s+m  & \text{ if } p = 2s \text{ for  } s\geqslant 1.
\end{cases}
$$
We consider two possible cases.
\medskip

{\it Case $1$}: $p=2s+1$. If $s=0$, then lemma follows from Lemma $\ref{A1}$, and then we assume that $s\geqslant 1$. Let $n_i = \upsilon(G_i)-\varepsilon_0(G_i) -m_i+1$ for $i=1,\ldots,2s$ and $n_{2s+1}=n-(n_1+\cdots+n_{2s}-s)$. Then, $n_{2s+1}\geqslant \upsilon(G_{2s+1})-\varepsilon_0(G_{2s+1}) -m_{2s+1}+1$.  

By Lemma $\ref{A1}$, for each $i=1,\ldots,2s+1$, there is a monomial $f_i$ of degree $2n_i-1$ such that
\begin{equation} \label{G1} \mi_i = I(G_i)^{n_i}:f_i \ \text{ and } f_i^2 \in I(G_i)^{2n_i-1}
\end{equation}
where $\mi_i = (x_j \mid j \in V(G_i))$.

Let $f  = f_1\ldots f_{2s+1}$, so that $\deg f = 2(n_1+\cdots+n_{2s+1})-(2s+1) = 2n-1$. It follows $f\notin \overline{I(G)^n}$. We now prove that $\mi = \overline{I(G)^n}:f$, i.e., $f x_i \in \overline{I(G)^n}$ for each $i=1,\ldots,r$. In order to prove $fx_i \in\overline{I(G)^n}$, we may assume that $i\in V(G_1)$. By Formula ($\ref{G1}$) we have $f_1x_i \in I(G)^{n_1}$. Let $J =I(G_2)+\cdots+I(G_{2s+1})$ and
$m = n_2+\cdots+n_{2s+1} - s$. By Lemma $\ref{A2}$ one has $f_2\cdots f_{2s+1} \in \overline {J^m}$. Notice that $n_1+m = n$ and $J\subseteq I(G)$. Together with Lemma $\ref{I1}$ we get
$$fx_i = (f_1x_i)f_2\cdots f_{2s+1} \in \overline{I(G_1)^{n_1}}\cdot  \overline{J^m}\subseteq \overline{I(G_1)^{n_1}J^m} \subseteq \overline{I(G)^{n_1+m}} =  \overline{I(G)^n}.$$
It follows $\mi = \overline{I(G)^n}:f$ and the lemma holds for this case.

{\it Case $2$}: $p =2s$. The proof is almost the same as the previous case. We may assume that $G_1$ has a cycle, say $C$, of length $2m-1$. Let $n_i = \upsilon(G_i)-\varepsilon_0(G_i) -m_i+1$ for $i=1,\ldots,2s-1$ and $n_{2s}=n-(m+n_1+\cdots+n_{2s-1}-s)$. Then we have $n_{2s}\geqslant \upsilon(G_{2s})-\varepsilon_0(G_{2s}) -m_{2s}+1$.  By Lemma $\ref{A1}$, for each $i=1,\ldots,2s$, there is a monomial $f_i$ of degree $2n_i-1$ such that
\begin{equation} \label{G2} \mi_i = I(G_i)^{n_i}:f_i \ \text{ and } f_i^2 \in I(G_i)^{2n_i-1}
\end{equation}
where $\mi_i = (x_j \mid j \in V(G_i))$.

Suppose that $V(G_1) =\{x_1,\ldots,x_q\}$ for some $1\leqslant q < r$ and the cycle $C$ is $x_1,\ldots,x_{2m-1}$. Let $g = x_1x_2\cdots x_{2m-1}$. Then $\deg g = 2m-1$ and $g^2 \in I(C)^{2m-1}$. Let $f = gf_1\ldots f_{2s}$, so that $\deg f = (2m-1)+2(n_1+\cdots+n_{2s})-2s = 2n-1$. It follows $f\notin \overline{I(G)^n}$. We now prove that $\mi = \overline{I(G)^n}:f$, i.e., $f x_i \in \overline{I(G)^n}$ for every $i=1,\ldots,r$. 

For $i\in V(G_1)$,  by Formula ($\ref{G2}$) we have $f_1x_i \in I(G)^{n_1}$. Let $J =I(C)+I(G_2)+\cdots+I(G_{2s})$ and
$t = m+n_2+\cdots+n_{2s} - s$. By Lemma $\ref{A2}$ one has $gf_2\cdots f_{2s} \in \overline {J^t}$. Notice that $n_1+t = n$ and $J\subseteq I(G)$. Together with Lemma $\ref{I1}$ we get
$$fx_i = (f_1x_i)gf_2\cdots f_{2s} \in \overline{I(G_1)^{n_1}}\cdot  \overline{J^t}\subseteq \overline{I(G_1)^{n_1}\cdot  J^t} \subseteq \overline{I(G)^{n_1+t}} =  \overline{I(G)^n}.$$

For  $i\in V(G_j)$ for some $2\leqslant j \leqslant 2s$, we prove $fx_i \in \overline{I(G)^n}$ by the same way. Thus, $\mi = \overline{I(G)^n}:f$, and the proof  is complete.
\end{proof}

\begin{lem}\label{UPC} Let $G$ be a connected nonbipartite graph and $v$ a vertex of $G$. Let $G_1,\ldots, G_s$ be the connected nonbipartite components of $G\setminus N_G[v]$. Let $2k_i-1$ be the minimum length of odd cycles of $G_i$ for every $i = 1,\ldots,s$; and let $2k-1$ be the minimum length of odd cycles of $G$. Then,
\begin{enumerate}
\item $\upsilon(G_i) - \varepsilon_0(G_i) -k_i \leqslant \upsilon(G)-\varepsilon_0(G)-k-1$, for $i=1,\ldots,s$.
\item If $s\geqslant 2$, we have
$$\sum_{i=1}^s (\upsilon(G_i) - \varepsilon_0(G_i) -k_i) \leqslant (\upsilon(G)-\varepsilon_0(G)-k) - k_1-(2s-3).$$
\end{enumerate}
\end{lem}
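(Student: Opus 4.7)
The plan is to reduce both parts to a single master inequality
\[
\sum_{i=1}^{s}\bigl(\upsilon(G_i) - \varepsilon_0(G_i)\bigr) \;\leq\; \upsilon(G) - \varepsilon_0(G) - 1, \qquad(\dagger)
\]
and then handle the odd-cycle terms separately. Since every odd cycle of $G_i$ is an odd cycle of $G$, one has $k_i \geq k$ for all $i$, and $k \geq 2$ because the minimum odd cycle length of $G$ is at least $3$. Part (1) follows by applying the argument behind $(\dagger)$ to the single component $G_i$ (i.e.\ replacing $\bigcup_j V(G_j)$ with $V(G_i)$) and then subtracting $k_i$ from the left-hand side and the larger quantity $k$ from the right. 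For part (2) one subtracts $\sum_{i=1}^{s} k_i$ from $(\dagger)$ and uses
\[
\sum_{i=1}^{s} k_i - k - k_1 \;=\; -k + \sum_{i=2}^{s} k_i \;\geq\; (s-2)k \;\geq\; 2s-4,
\]
which furnishes precisely the extra $(2s-3) + k_1$ slack demanded on the right-hand side of (2).

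The core ingredient for $(\dagger)$ is a structural claim about leaves: if $u \in V(G_i)$ is a leaf of $G$, then its unique incident edge lies entirely inside $V(G_i)$, and hence is a leaf edge of $G_i$. Indeed, $u$ sits in the connected component $G_i$ of $G \setminus N_G[v]$, which is nonbipartite and therefore has at least three vertices; were $u$'s unique neighbor in $N_G[v]$, the vertex $u$ would be isolated in $G_i$, contradicting the connectedness of $G_i$. Setting $W := V(G) \setminus \bigcup_{i=1}^{s} V(G_i) \supseteq N_G[v]$ and letting $\alpha_i$ denote the number of leaf edges of $G$ contained in $V(G_i)$, we conclude that $\alpha_i \leq \varepsilon_0(G_i)$ and $\varepsilon_0(G) = \sum_{i=1}^{s}\alpha_i + \beta$, where $\beta$ counts the remaining leaf edges, namely those whose leaf endpoint lies in $W$.

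To bound $\beta$ it suffices to exhibit a non-leaf vertex of $G$ inside $W$: since each leaf is incident to exactly one edge, $\beta$ is at most the number of leaves of $G$ in $W$, which is at most $|W| - 1$ once $W$ contains a non-leaf. Either $\deg_G(v) \geq 2$, so that $v$ itself is a non-leaf in $W$, or $\deg_G(v) = 1$, in which case $v$'s unique neighbor $u$ must satisfy $\deg_G(u) \geq 2$ (otherwise $G = \{u, v\}$ would be bipartite, contradicting the hypothesis). Hence $\beta \leq \upsilon(G) - \sum_{i=1}^{s}\upsilon(G_i) - 1$. Substituting back yields
\[
\varepsilon_0(G) \;\leq\; \sum_{i=1}^{s}\varepsilon_0(G_i) + \upsilon(G) - \sum_{i=1}^{s}\upsilon(G_i) - 1,
\]
which rearranges exactly to $(\dagger)$.

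The main obstacle is the structural claim in the second paragraph: one must carefully exclude the possibility that a $G$-leaf inside $V(G_i)$ sends its unique edge across into $N_G[v]$, which is precisely where the nonbipartite (hence $\geq 3$-vertex) nature of $G_i$ is used. Once this observation is in place, the rest is a clean partition of the leaf edges of $G$ plus the arithmetic bound $k \geq 2$.
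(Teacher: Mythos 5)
Your proof is correct and rests on the same master inequality $(\dagger)$ as the paper, followed by the same arithmetic to deduce (1) and (2) (for (1), $k_i\geq k$ with the extra $-1$ of slack; for (2), $\sum_{i\geq 2}k_i - k\geq (s-2)k\geq 2s-4$ using $k\geq 2$). The one genuine divergence is in how $(\dagger)$ is obtained. The paper counts \emph{non-leaf vertices} rather than leaf edges: since each $G_i$ is connected nonbipartite it is not $K_2$, so $\upsilon(G_i)-\varepsilon_0(G_i)$ is exactly the number of non-leaf vertices of $G_i$; a vertex that is a non-leaf of an induced subgraph is automatically a non-leaf of the ambient graph (its degree can only increase); and $N_G[v]$ contributes at least one further non-leaf vertex of $G$ (the case analysis on $\deg_G(v)$ that you supply), giving $(\dagger)$ in one line. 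Your dual count of \emph{leaf edges} is also valid, but it hinges on the extra structural claim that a $G$-leaf $u\in V(G_i)$ has its unique edge entirely inside $V(G_i)$ — without it the partition $\varepsilon_0(G)=\sum_i\alpha_i+\beta$ would break, since a leaf edge with leaf in $V(G_i)$ might otherwise escape both tallies. The paper's direction avoids needing this because the implication ``non-leaf in $G_i$ implies non-leaf in $G$'' is trivially injective for induced subgraphs, whereas your direction (leaf in $G$ implies leaf edge of $G_i$) requires an argument. Both routes land in the same place; the paper's bookkeeping is just a touch lighter. (One small wording slip: in your deduction of (1) you call $k$ the \emph{larger} quantity; it is of course the smaller one, $k\leq k_i$, which is precisely why subtracting $k_i$ from the left and $k$ from the right preserves the inequality.)
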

\begin{proof} For every $i=1,\ldots,s$, $G_i$ is not an edge, so $\sum_{i=1}^s (\upsilon(G_i) - \varepsilon_0(G_i)$ is the number of  non-leaf vertices of $\bigcup_{i=1}^s G_i$.  Note also that non-leaf vertices of $\bigcup_{i=1}^s G_i$ are those of  $G$. Since the set $N_G[v]$  contains at least one non-leaf vertex of $G$, it follows that
\begin{equation}\label{UPC1}
\sum_{i=1}^s (\upsilon(G_i) - \varepsilon_0(G_i)) + 1 \leqslant \upsilon(G) - \varepsilon_0(G).
\end{equation}

In particular, for each $i=1,\ldots,s$, one has $\upsilon(G_1) - \varepsilon_0(G_1) + 1 \leqslant \upsilon(G) - \varepsilon_0(G)$. Note that $k \leqslant k_i$, so
$$(\upsilon(G_i) - \varepsilon_0(G_i)-k_i) + 1 \leqslant \upsilon(G) - \varepsilon_0(G)-k,$$
and $(1)$ follows.

Assume that $s \geqslant 2$. Since $k_i\geqslant k\geqslant 2$ for all $i=1,\ldots,s$, we deduce that
$$\sum_{i=3}^s k_i \geqslant 2(s-2).$$

Together with Inequality (\ref{UPC1}), it gives
\begin{align*}
\sum_{i=1}^s &(\upsilon(G_i) - \varepsilon_0(G_i) -k_i) +k_1+2s-3 = \sum_{i=1}^s (\upsilon(G_i) - \varepsilon_0(G_i)) - k_2 -\sum_{i=3}^s k_i +2s-3 \\
&\leqslant \upsilon(G)-\varepsilon_0(G)-1 -k - 2(s-2)+2s-3 \leqslant \upsilon(G)-\varepsilon_0(G) - k,
\end{align*}
and $(2)$ follows. The proof is complete.
\end{proof}

We will establish an upper bound for $\overline{\astab}(I(G))$ by starting with the definition of  the invariant $\phi_0(G)$.

\begin{defn} \label{DEF} Let $G$ be a graph. Assume that $G_1,\ldots,G_s$ be all connected nonbipartite components of $G$. Let $2k_i-1$ be the minimum length of odd cycles of $G_i$ for every $i = 1,\ldots, s$ and let $2k-1$ be the minimum length of odd cycles of $G$. Then, we define
$$n_0(G)=
\begin{cases}
1  & \text{ if } s = 0,\\
\sum_{i=1}^s(\upsilon(G_i)-\varepsilon_0(G_i)-k_i)  + j + k  & \text{ if } s = 2j \text{ for } j\geqslant 1,\\
\sum_{i=1}^s(\upsilon(G_i)-\varepsilon_0(G_i)-k_i)  + j + 1  & \text{ if } s = 2j+1 \text{ for } j\geqslant 0,
\end{cases}
$$
and
$$\phi_0(G) = \max\{n_0(G') \mid G' \text{ consists of some connected components of } G\}.$$
\end{defn}

\begin{lem}\label{sub-ineq} Let $G$ be a graph and $v$ a vertex of $G$. Then, $\phi_0(G\setminus N_G[v]) \leqslant \phi_0(G)$.
\end{lem}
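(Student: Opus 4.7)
The plan is to exhibit, for every subgraph $H'$ of $H := G\setminus N_G[v]$ that is a union of connected components of $H$, a subgraph $G'$ of $G$ that is a union of connected components of $G$ with $n_0(H') \leq n_0(G')$; then $\phi_0(H) \leq \phi_0(G)$ follows by taking maxima. Let $C$ be the connected component of $G$ containing $v$. Every component of $H$ is either a component of $G$ distinct from $C$ or a component of $C\setminus N_G[v]$, so such an $H'$ decomposes uniquely as $H' = H'_1 \cup H'_2$, where $H'_1$ is a union of components of $G$ other than $C$ and $H'_2$ is a union of components of $C\setminus N_G[v]$. Note also that $n_0$ depends only on the multiset of nonbipartite components (the parameter $k$ in Definition \ref{DEF} is determined by them, since bipartite components contain no odd cycle), so bipartite components may be ignored throughout.

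If $H'_2$ has no nonbipartite component---automatic when $C$ is bipartite---I take $G' := H'_1$, so $n_0(G') = n_0(H')$. Otherwise $C$ is nonbipartite and I take $G' := H'_1 \cup C$. Denote by $G_1,\ldots,G_s$ (with $s\geq 1$) the nonbipartite components of $H'_2$ with minimum odd cycle parameters $k_i$, by $B_1,\ldots,B_t$ the nonbipartite components of $H'_1$ with parameters $\ell_j$, and by $2k-1$ the minimum odd cycle length of $C$; set $M := \upsilon(C) - \varepsilon_0(C) - k$. The nonbipartite components of $H'$ are then $B_1,\ldots,B_t,G_1,\ldots,G_s$ and those of $G'$ are $B_1,\ldots,B_t,C$. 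From Lemma \ref{UPC}(1) I obtain $\upsilon(G_i) - \varepsilon_0(G_i) - k_i \leq M - 1$, and from Lemma \ref{UPC}(2), after reindexing so that $k_1 = K := \min_i k_i$, I obtain $\sum_{i=1}^s (\upsilon(G_i) - \varepsilon_0(G_i) - k_i) \leq M - K - (2s-3)$ whenever $s \geq 2$.

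I then expand $n_0(H') - n_0(G')$ via Definition \ref{DEF} in each of the four parity combinations of $t+s$ and $t+1$. Writing $k^H$, $k^{G'}$ for the minimum odd cycle parameters of $H'$ and $G'$, the identities $k^H = \min(L,K)$ and $k^{G'} = \min(L,k)$, where $L := \min_j \ell_j$, together with $k \leq K$ (since every odd cycle of $G_i \subseteq C$ is an odd cycle of $C$) yield $k^H - k^{G'} \leq K - k$. For $s \geq 2$, plugging the Lemma \ref{UPC}(2) bound into each of the four formulas and using $k^H \leq K$, $k^{G'} \geq 2$, $k \geq 2$ bounds the difference above by an expression of the form $-3s/2 + O(1)$, hence by a strictly negative number. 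For $s = 1$, the two relevant parity cases give either $n_0(H') - n_0(G') = S_G - M \leq -1$ (when $t$ is even, directly from Lemma \ref{UPC}(1)) or $S_G - M + k^H - k^{G'}$ (when $t$ is odd), where $S_G := \upsilon(G_1) - \varepsilon_0(G_1) - k_1$. The second is the delicate subcase: splitting on whether $L \leq k$, $k < L \leq k_1$, or $L > k_1$ reduces the required inequality in each instance to the elementary fact $\upsilon(G_1) - \varepsilon_0(G_1) \leq \upsilon(C) - \varepsilon_0(C)$, which holds because any non-leaf vertex of the subgraph $G_1$ has degree in $C$ at least its degree in $G_1$, so remains a non-leaf in $C$. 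The main obstacle is the bookkeeping in this last subcase; the remaining cases fall out of Lemma \ref{UPC} with substantial slack.
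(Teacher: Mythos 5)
Your proposal is correct and follows essentially the same route as the paper: both reduce the inequality to comparing $n_0$ of a union of components of $G\setminus N_G[v]$ with $n_0$ of the corresponding union of components of $G$ obtained by replacing the pieces of the component of $v$ with that component itself, and both then invoke Lemma~\ref{UPC} together with a parity case analysis. The only minor differences are expository — you run the comparison for every component subset rather than just the maximizing one, you spell out all four parity combinations where the paper abbreviates the second case, and in the delicate $s=1$ subcase you isolate the estimate $\upsilon(G_1)-\varepsilon_0(G_1)\leqslant\upsilon(C)-\varepsilon_0(C)$ (which is the inequality \eqref{UPC1} underlying Lemma~\ref{UPC}(1)) — but the mathematical content matches the paper's proof.
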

\begin{proof} Suppose that $G_1,G_2,\ldots,G_s$ are all connected nonbipartite components of $G$. We may assume that $v\in V(G_1)$, so that $N_G[v] = N_{G_1}[v]$. If $G_1\setminus N_G[v]$ is bipartite, then all connected nonbipartite components of $G\setminus N_G[v]$ are just $G_2,\ldots, G_s$. By Definition \ref{DEF} we have $\phi_0(G_1\setminus N_G[v]) \leqslant \phi_0(G)$, and the lemma holds for this case.

Assume that $G_1\setminus N_G[v]$ is not bipartite and let $H_1,\ldots,H_t$ be all connected nonbipartite components of $G_1\setminus N_G[v]$. Then, all connected nonbipartite components of $G\setminus N_G[v]$ are just
$$H_1,\ldots, H_t, G_2,\ldots,G_s.$$

Without loss of generality, we may assume that $$\phi_0(G\setminus N_G[v]) = n_0(H_1 \cup \cdots H_p \cup G_2\cup \ldots\cup G_q),$$
for some $0\leqslant p \leqslant t$ and $1\leqslant q \leqslant s$.

Then, the inequality $\phi_0(G\setminus N_G[v]) \leqslant \phi_0(G)$ is obvious if $p = 0$ by Definition \ref{sub-ineq}, so that we assume that $p \geqslant 1$.

For simplicity, let
$$G' = H_1 \cup \cdots H_p \cup G_2\cup \ldots\cup G_q, \ \text{ and } G'' = G_1 \cup G_2\cup \ldots\cup G_q.$$

Now in order to prove $\phi_0(G\setminus N_G[v])\leqslant \phi_0(G)$ it suffices to show that 
\begin{equation}\label{main-ineq}
n_0(G') \leqslant n_0(G'').
\end{equation}

Let $2k_i-1$ be the minimum length of odd cycles of $G_i$ for $i=1,\ldots,s$; let $2l_i-1$ be the minimum length of odd cycles of $H_i$ for $i=1,\ldots, q$; let $2k-1$ be the minimum length of odd cycles of $G''$; and let $2l-1$ be the minimum length of odd cycles of $G'$. Observe that $k\leqslant l$.

\medskip

In order to prove $(\ref{main-ineq})$, we consider two possible cases.

\medskip

{\it Case $1$}: $q = 2j$ for $j\geqslant 1$. In this case,

\begin{equation}\label{MIn4}
n_0(G'') = \sum_{i=1}^q(\upsilon(G_i)-\varepsilon_0(G_i)-k_i)  + j + k.
\end{equation}

We next consider two subcases.
\medskip

{\it Subcase $1$}: $p = 2e+1$ for $e\geqslant 0$. Then, $p + q -1 = 2(j+e)$. Hence,

\begin{equation}\label{MIn5}
n_0(G')  =\sum_{i=1}^p \upsilon(H_i)-\varepsilon_0(H_i)-l_i) +\sum_{i=2}^{q} \upsilon(G_i)-\varepsilon_0(G_i)-k_i) + j + e +l.
\end{equation}
From Formulas (\ref{MIn4}) and (\ref{MIn5}), The inequality $n_0(G')\leqslant n_0(G'')$ is equivalent to
\begin{equation}\label{MIn5e}
\sum_{i=1}^p \upsilon(H_i)-\varepsilon_0(H_i)-l_i) + e +l \leqslant \upsilon(G_1)-\varepsilon_0(G_1) -k_1 +k.
\end{equation}

If $k = k_1$, since $l \leqslant l_1$, we have the inequality above holds true whenever
$$\sum_{i=1}^p (\upsilon(H_i)-\varepsilon_0(H_i)-l_i) + e +l_1 \leqslant \upsilon(G_1)-\varepsilon_0(G_1).$$
As this inequality follows from Lemma \ref{UPC}, so the inequality (\ref{main-ineq}) holds true.

Assume that $k \ne k_1$, so that $k = k_i$ for some $i\geqslant 2$. It follows that  $l = k_i = k$. Hence,  the inequality (\ref{MIn5e}) holds true if
$$\sum_{i=1}^p (\upsilon(H_i)-\varepsilon_0(H_i)-l_i) + e\leqslant \upsilon(G_1)-\varepsilon_0(G_1) -k_1.$$
It follows from Lemma \ref{UPC}, so the inequality (\ref{main-ineq}) holds true. Thus, the desired inequality is proved for this subcase.

{\it Subcase $2$}: $p = 2(e+1)$ for $e\geqslant 0$. Then, $p + q -1 = 2(j+e)+1$. Hence,

\begin{equation}\label{MIn6}
n_0(G')  =\sum_{i=1}^p \upsilon(H_i)-\varepsilon_0(H_i)-l_i) +\sum_{i=1}^{q-1} \upsilon(G_i)-\varepsilon_0(G_i)-k_i) + j + e +1.
\end{equation}

From Formulas (\ref{MIn4}) and (\ref{MIn6}), the inequality $n_0(G')\leqslant n_0(G'')$ is equivalent to the following one
\begin{equation}\label{DE002}
\sum_{i=1}^p (\upsilon(H_i)-\varepsilon_0(H_i)-l_i) + e +1 \leqslant \upsilon(G_1)-\varepsilon_0(G_1) -k_1 + k.
\end{equation}

Since $p\geqslant 2$ and $k\geqslant 2$,  the inequality $(\ref{DE002})$ follows from Lemma \ref{UPC}, and therefore the inequality (\ref{main-ineq}) is proved for this subcase.

\medskip

{\it Case $2$}: $q = 2j+1$ for $j \geqslant 0$. The proof is the same as in the case $1$ above so that we skip it, and thus the lemma follows.
\end{proof}

We are now in position to prove the main result of this section.

\begin{thm}\label{T3} Let $G$ be a graph. Then, $\overline{\astab}(I(G)) \leqslant \phi_0(G)$.
\end{thm}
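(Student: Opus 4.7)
The approach is induction on $r = |V(G)|$; the base case with $I(G) = 0$ is trivial. Since Lemma \ref{HIO} already supplies the inclusion $\ass(R/\overline{I(G)^n}) \subseteq \ass(R/\overline{I(G)^{n+1}})$ for every $n$, it suffices to prove the reverse inclusion whenever $n \geqslant \phi_0(G)$. Fix such an $n$, choose $\p \in \ass(R/\overline{I(G)^{n+1}})$, and split on whether $\p$ equals the maximal ideal $\mi$.

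In the case $\p = \mi$: by \cite[Proposition 3.3]{MMV} every connected component of $G$ must be nonbipartite. Taking $G' = G$ in Definition \ref{DEF} gives $\phi_0(G) \geqslant n_0(G)$. Comparing the piecewise formulas for $n_0$ and $n_1$ term by term, and using that the minimum odd cycle length $2k_i - 1$ of each component $G_i$ is at most its maximum odd cycle length $2m_i - 1$ while the two global quantities $k$ and $m$ coincide, yields $n_0(G) \geqslant n_1(G)$. Hence $n \geqslant \phi_0(G) \geqslant n_1(G)$, and Lemma \ref{A3} delivers $\mi \in \ass(R/\overline{I(G)^n})$, as required.

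In the case $\p \neq \mi$: Lemma \ref{DI} produces a vertex $i$ with $\p \in \ass(R/\overline{(I(G)_{\{i\}})^{n+1}})$. Inverting $x_i$ turns each edge through $i$ into a single variable, giving the decomposition
$$I(G)_{\{i\}} = J_i + I(G \setminus N_G[i]), \qquad J_i := (x_j \mid j \in N_G(i)).$$
The summand $J_i$ is a prime ideal generated by variables disjoint from those appearing in $I(G \setminus N_G[i])$, hence normally torsion-free. Theorem \ref{T0} therefore applies, and Proposition \ref{T1} (together with the fact that the extraneous variable $x_i$, present in $R$ but not in $I(G)_{\{i\}}$, transfers associated primes in the obvious way under a polynomial extension) forces $\p = J_i + \q$ for some $\q \in \ass(B/\overline{I(G \setminus N_G[i])^{n+1}})$, where $B = K[x_k \mid k \in V(G) \setminus N_G[i]]$. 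Because $G \setminus N_G[i]$ has strictly fewer vertices, the inductive hypothesis combined with Lemma \ref{sub-ineq} gives
$$\overline{\astab}(I(G \setminus N_G[i])) \leqslant \phi_0(G \setminus N_G[i]) \leqslant \phi_0(G) \leqslant n,$$
so $\q \in \ass(B/\overline{I(G \setminus N_G[i])^{n}})$. Reassembling yields $\p \in \ass(R/\overline{(I(G)_{\{i\}})^n}) \subseteq \ass(R/\overline{I(G)^n})$ again via Lemma \ref{DI}.

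The principal delicacy lies in the non-maximal case: one must verify that $I(G)_{\{i\}}$ really splits into a normally torsion-free square-free summand and another monomial summand in a disjoint set of variables so that Proposition \ref{T1} genuinely applies, and that the free variable $x_i$ does not disturb the associated-prime bookkeeping. The $\p = \mi$ case is more routine but depends on the inequality $n_0(G) \geqslant n_1(G)$, which is precisely the bridge between the combinatorial invariant $\phi_0(G)$ driving $\overline{\astab}$ and the threshold supplied by Lemma \ref{A3}.
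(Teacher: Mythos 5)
Your argument is correct and follows essentially the same path as the paper's proof: induction on the number of vertices, Lemma~\ref{A3} to handle the maximal ideal, Lemma~\ref{DI} to descend to localizations $I(G)_{\{i\}}$, Proposition~\ref{T1} applied to the split $I(G)_{\{i\}} = (x_j \mid j\in N_G(i)) + I(G\setminus N_G[i])$, and Lemma~\ref{sub-ineq} to show $\phi_0$ does not increase under passing to $G\setminus N_G[i]$. The only presentational differences are that you prove the stabilization $\ass(R/\overline{I(G)^{n+1}})\subseteq\ass(R/\overline{I(G)^{n}})$ prime by prime rather than computing the associated-prime set globally, you skip the paper's preliminary reduction to the subgraph of nonbipartite components (which you handle implicitly inside the $\p=\mi$ case via the MMV criterion), and you spell out the comparison $n_0(G)\geqslant n_1(G)$ --- using $k_i\leqslant m_i$ and $k=m$ --- which the paper uses silently when invoking Lemma~\ref{A3} with the threshold $\phi_0(G)$ in place of $n_1(G)$. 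That explicit verification is a small improvement in rigor over the paper's exposition; otherwise the two proofs are the same.
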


\begin{proof} We prove the theorem by induction on $\upsilon(G)$. If $\upsilon(G) \leqslant 2$, then $G$ is bipartite. Hence, $I(G)$ is normally torsion-free by Lemma $\ref{L03}$, and hence the theorem holds true in this case.

Assume that $\upsilon(G) \geqslant 3$.  Let $G'$ be the subgraph of $G$ consisting of all connected bipartite components of $G$ and let $G''$ be the subgraph of $G$ consisting of all connected nonbipartite components of $G$. Then, $I(G) = I(G')+I(G'')$. On the other hand, $I(G')$ is normally torsion-free by Lemma $\ref{L03}$, so $\overline{\astab}(I(G)) = \overline{\astab}(I(G''))$ by Proposition \ref{T1}. On the other hand, by Definition \ref{DEF} we have $\phi_0(G) = \phi_0(G'')$, so that we may assume that $G = G''$, that is the case all connected components of $G$ are nonbipartite.

By Lemma $\ref{A3}$ we deduce that $\mi \in\ass R/\overline{I(G)^n}$ for all $n\geqslant \phi_0(G)$. Together with Lemma \ref{DI}, we obtain
$$\ass R/\overline{I(G)^n} =\{\mi\}\cup \bigcup_{i=1}^r \ass R/\overline{(I(G)_{\{i\}}) ^n}\ \text{ for all } n\geqslant \overline n_0(G).$$
In particular, 
\begin{equation}\label{as01}
\overline{\astab}(I(G)) \leqslant \max\{\phi_0(G),\overline{\astab} (I(G)_{\{1\}}),\ldots, \overline{\astab} (I(G)_{\{r\}})\}.
\end{equation}

We next claim that  for every $i=1,\ldots,r$, we have $\overline{\astab} (I(G)_{\{i\}}) \leqslant \phi_0(G)$. Indeed, fix an integer $1\leqslant i \leqslant r$, we may assume that $N_G(i)=\{1,\ldots,p\}$. Then,  $I(G)_{\{i\}} = (x_1,\ldots,x_p)+ I(H)$ where $H = G\setminus N_G[i]$. Now by Proposition $\ref{T1}$ we have $\overline{\astab}(I(G)_{\{i\}}) =\overline {\astab}(I(H))$. 

On the other hand, since $\upsilon(H) < \upsilon(G)$, by the induction hypothesis we have $\overline {\astab}(H) \leqslant \phi_0 (H)$. By Lemma \ref{sub-ineq}, we have $\phi_0(H) \leqslant \phi_0(G)$. Hence, $\overline {\astab}(I(H)) \leqslant \phi_0(G)$, and the claim follows.

Now, together the claim with Inequality (\ref{as01}) we get $\overline{\astab}(I(G)) \leqslant \phi_0(G)$, and the proof is complete.
\end{proof}

\begin{cor} Let $G$ be a connected nonbipartite graph. Assume the minimum length of odd cycles of $G$ is $2k-1$. Then, $\overline{\astab}(I(G)) \leqslant \upsilon(G) - \varepsilon_0(G) - k + 1$.
\end{cor}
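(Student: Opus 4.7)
The corollary is essentially a direct unpacking of Theorem \ref{T3} for the special case where $G$ is connected and nonbipartite, and my plan is to verify that $\phi_0(G)$ evaluates to the claimed expression in this case.

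First, I would invoke Theorem \ref{T3} to obtain $\overline{\astab}(I(G)) \leqslant \phi_0(G)$, reducing the statement to the arithmetic identity
\[
\phi_0(G) = \upsilon(G) - \varepsilon_0(G) - k + 1.
\]
To compute $\phi_0(G)$, recall its definition: it is the maximum of $n_0(G')$ over all subgraphs $G'$ consisting of some connected components of $G$. Since $G$ is connected, the only possibilities for $G'$ are the empty graph (yielding the value $1$ from the $s=0$ branch of Definition \ref{DEF}) and $G$ itself. So I need to compute $n_0(G)$ and check that it dominates.

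For $n_0(G)$: since $G$ is itself connected and nonbipartite, the list of connected nonbipartite components of $G$ has length $s = 1$, so we land in the third branch of Definition \ref{DEF} with $j = 0$. Writing $2k_1 - 1 = 2k - 1$ for the minimum odd cycle length (here $k_1 = k$ because $G$ is its own unique nonbipartite component), the formula gives
\[
n_0(G) = (\upsilon(G) - \varepsilon_0(G) - k) + 0 + 1 = \upsilon(G) - \varepsilon_0(G) - k + 1.
\]

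Finally I need $n_0(G) \geqslant 1$ so that this value, not the empty-graph value, realizes the max. The odd cycle of length $2k-1$ in $G$ contains $2k-1 \geqslant 3$ vertices, none of which is a leaf (each has degree $\geqslant 2$ in the cycle alone); hence $\upsilon(G) - \varepsilon_0(G) \geqslant 2k - 1$, giving $n_0(G) \geqslant k \geqslant 2 > 1$. Therefore $\phi_0(G) = n_0(G) = \upsilon(G) - \varepsilon_0(G) - k + 1$, and combining with Theorem \ref{T3} yields the desired bound. There is no real obstacle here: the work is entirely in matching $G$ against the correct branch of Definition \ref{DEF} and observing that connectedness collapses the maximum in the definition of $\phi_0$.
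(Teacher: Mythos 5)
Your proof is correct and takes exactly the same approach as the paper, which simply observes that $\phi_0(G) = \upsilon(G) - \varepsilon_0(G) - k + 1$ when $G$ is connected and nonbipartite and then invokes Theorem \ref{T3}; your version just spells out the arithmetic in Definition \ref{DEF} (the $s=1$, $j=0$ branch) and verifies that $n_0(G)\geqslant 1$, which the paper leaves implicit.
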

\begin{proof} Since $\phi_0(G) = \upsilon(G) - \varepsilon_0(G) - k + 1$ in this case, the corollary follows from Theorem \ref{T3}.
\end{proof}

\section{the index of depth stability}

In this section we will establish a bound for $\overline{\dstab}(I(G))$ for all simple graphs $G$. As usual, we let $R = K[x_1,\ldots,x_r]$ and $\mi = (x_1,\ldots,x_r)$ the maximal homogeneous ideal of $R$. For any graph with symbol $G$, we assume that $V(G) = [r]$.

\medskip

First we have the following lemma.

\begin{lem}\label{D1} Let $G$ be a graph. Assume that $G = G' \cup G''$ is a disjoint union of two induced subgraphs $G'$ and $G''$ where every connected component of $G'$ is bipartite and every connected component of $G''$ is nonbipartite. Let $s$ be the number of connected bipartite components of $G$. Then,
\begin{enumerate}
\item $\depth R/\overline{I(G)^n} \geqslant s$ for all $n\geqslant 1$.
\item $\depth R/\overline{I(G)^n} = s$ if and only if $n\geqslant \dstab(I(G')) +\overline{\dstab}(I(G''))-1$.
\item $\overline{\dstab}(I(G)) = \dstab(I(G')) +\overline{\dstab}(I(G''))-1$.
\end{enumerate}
\end{lem}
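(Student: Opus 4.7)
The plan is to apply Proposition \ref{T2} to the decomposition $I(G) = I(G') + I(G'')$. Since every connected component of $G'$ is bipartite, $G'$ itself is bipartite, so by Lemma \ref{L03} the ideal $I(G')$ is normally torsion-free and Proposition \ref{T2} applies with $I = I(G')$ and $J = I(G'')$. This expresses $\depth R/\overline{I(G)^n}$ as a minimum over two families of terms involving $\depth A/I(G')^a$ and $\depth B/\overline{I(G'')^b}$, where $A$ and $B$ denote the polynomial rings on the variables of $G'$ and $G''$ respectively.

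The argument rests on two auxiliary facts. \textbf{(i)} $\depth A/I(G')^m \geqslant s$ for every $m \geqslant 1$, with equality iff $m \geqslant \dstab(I(G'))$. For the inequality I would proceed by induction on the number of connected components of $G'$: for each connected bipartite component $G'_j$, the ideal $I(G'_j)$ is normally torsion-free and the maximal homogeneous ideal does not belong to $\ass$ of its powers (no connected graph on $\geqslant 2$ vertices has the full vertex set as a minimal vertex cover, and by normal torsion-freeness the associated primes are constant across powers), so the depth is $\geqslant 1$ for every power; iterating Proposition \ref{T2} across the components then lifts this to depth $\geqslant s$ for $I(G')$. The equality characterization uses that the stable value is exactly $s$, combined with monotonicity of $\depth A/I(G')^m$ in $m$ for bipartite edge ideals. \textbf{(ii)} $\depth B/\overline{I(G'')^m} \geqslant 0$ with equality iff $m \geqslant \overline{\dstab}(I(G''))$: since every component of $G''$ is nonbipartite, Lemma \ref{A3} forces the maximal ideal of $B$ into $\ass(B/\overline{I(G'')^m})$ once $m$ is large enough, so the stable depth is $0$; by Lemma \ref{HIO}, the maximal ideal persists in $\ass$ once it enters, so the depth equals $0$ exactly from the threshold $\overline{\dstab}(I(G''))$ onwards.

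With (i) and (ii) the three parts fall out of a direct analysis of the minimum in Proposition \ref{T2}. Each ``type A'' term $\depth A/I(G')^{n-i} + \depth B/\overline{I(G'')^i} + 1$ is at least $s + 0 + 1 = s+1$, and each ``type B'' term $\depth A/I(G')^{n-j+1} + \depth B/\overline{I(G'')^j}$ is at least $s + 0 = s$, giving (1). For (2), the type A terms are always strictly greater than $s$, so the minimum equals $s$ precisely when some type B term does; since both summands are $\geqslant s$ and $\geqslant 0$, this forces $\depth A/I(G')^{n-j+1} = s$ and $\depth B/\overline{I(G'')^j} = 0$, i.e., by (i) and (ii), the existence of $j \in [n]$ satisfying $n - j + 1 \geqslant \dstab(I(G'))$ and $j \geqslant \overline{\dstab}(I(G''))$. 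Such a $j$ exists iff $n \geqslant \dstab(I(G')) + \overline{\dstab}(I(G'')) - 1$. Part (3) is then immediate: the depth equals $s$ at and above this threshold and exceeds $s$ below it, so $\overline{\dstab}(I(G))$ equals the threshold.

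The main obstacle is the equality half of fact (i), namely verifying $\depth A/I(G')^m > s$ for $m < \dstab(I(G'))$. This amounts to the non-increasing property of $\depth A/I(G')^m$ for bipartite $G'$; for trees it is classical, and in general I expect to handle it by a separate induction on the components of $G'$, again built on Proposition \ref{T2} and the characterization of $\dstab$ on each component.
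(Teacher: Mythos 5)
Your outline follows the paper's proof very closely: both use the binomial expansion of Proposition~\ref{T2} with $I = I(G')$ normally torsion-free and $J = I(G'')$, reduce all three parts to two facts about the summands, and then do a straightforward analysis of the minimum. Your analysis of the minimum is correct, including the observation that the ``type A'' terms with the $+1$ can never realize the value $s$, and the arithmetic $n \geqslant \dstab(I(G'))+\overline{\dstab}(I(G''))-1 \iff \exists\, j\in[n]$ with $n-j+1\geqslant \dstab(I(G'))$ and $j\geqslant \overline{\dstab}(I(G''))$. Your fact~(ii) about $\depth B/\overline{J^m}$ and $\overline{\dstab}(J)$ is derived exactly as in the paper, from Lemma~\ref{A3} and Lemma~\ref{HIO}.

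The gap is precisely where you flag it: the equality half of your fact~(i), namely that $\depth A/I(G')^m = s$ happens exactly for $m\geqslant \dstab(I(G'))$. Knowing only that the depth is $\geqslant s$ for all $m$ and equals $s$ eventually does not rule out the depth touching $s$ for some small $m$ and then climbing back up before the stable range, which would make $\min\{m \mid \depth A/I(G')^m = s\}$ strictly smaller than $\dstab(I(G'))$. You propose to close this with ``monotonicity of $\depth A/I(G')^m$ in $m$ for bipartite edge ideals,'' but you do not prove it and it is not one of the lemmas available in this paper. The paper avoids the issue entirely by citing Trung's result on depths of powers of bipartite edge ideals (\cite[Theorem~4.4]{Tr}), which delivers both the lower bound $\depth A/I(G')^m\geqslant s$ and the identity $\dstab(I(G')) = \min\{m \mid \depth A/I(G')^m = s\}$ in one stroke. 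Without that citation or a full proof of the monotonicity claim, your argument for part (2) (and hence part (3)) is incomplete; everything else is sound and matches the paper's route.
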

\begin{proof} Let $A = K[x_i\mid i\in V(G')]$ and $B = K[x_i\mid i \in V(G'')]$. For simplicity, set $I = I(G')$ and $J = I(G'')$. Then, $I$ and $J$ are monomial ideals in $A$ and $B$ respectively. Note also that $I(G) = I+J$ and $s$ is the number of connected components of $G'$.

By \cite[Theorem 4.4]{Tr} we have
\begin{equation}\label{GG1}
\depth A/I^n \geqslant s, \ \text{ for all } n\geqslant 1,
\end{equation}
and
\begin{equation}\label{GG2}
\dstab(I) =\min\{n\geqslant 1\mid \depth R/I(G)^n = s\}.
\end{equation}

Let $\mi_B$ be the maximal homogeneous ideal of $B$. By Lemma \ref{A3} we have $\mi_B \in \ass(B/\overline{J^n})$ for all $n$ large enough. Together with Lemma \ref{HIO} we deduce that
\begin{equation}\label{GG3}
\overline{\dstab}(J) = \min\{n\geqslant 0\mid \mi_B \in \ass(B/\overline{J^n})\} =  \min\{n\geqslant 0\mid  \depth(B/\overline{J^n}) = 0\}.
\end{equation}

On the other hand, since $I$ is normally torsion-free according to Lemma \ref{L03}, by Proposition \ref{T2}, for all $n\geqslant 1$, we have
\begin{align}\label{GG4}
\depth R/\overline{I(G)^n} = \min\limits_{i\in[n-1], \ j\in[n]}\{&\depth A/I^{n-i} + \depth B/\overline{J^i}+1,\\
 &\depth A/I^{n-j+1} + \depth B/\overline{J^j}\}.\notag
\end{align}

For $i\in [n-1]$ and $j\in [n]$, from Formulas (\ref{GG1})-(\ref{GG3}) we deduce that
\begin{equation}\label{GG5}
\depth A/I^{n-i} + \depth B/\overline{J^i}+1 \geqslant s + 1 \text{ and } \depth A/I^{n-j+1} + \depth B/\overline{J^j} \geqslant s.
\end{equation}

From Formula $(\ref{GG4})$ and Inequalities $(\ref{GG5})$, we have $\depth R/\overline{I(G)^n}\geqslant s$ for all $n\geqslant 1$, and thus $(1)$ follows.

Next from $(\ref{GG4})$ and Inequalities $(\ref{GG5})$, we deduce that $\depth R/\overline{I(G)^n} = s$ if and only if 
$$\depth A/I^{n-j+1} = 0 \text{ and }  \depth B/\overline{J^j} = 0 \text{ for some } j \in [n].$$
Together with Formulas (\ref{GG1})-(\ref{GG3}), it is equivalent to
$$n-j+1\geqslant \dstab(I) \text{ and } j \geqslant \overline{\dstab}(J) \text{ for some } j \in [n].$$
It holds true if and only if $(n-j+1)+j \geqslant \dstab(I)+ \overline{\dstab}(J)$, or $n\geqslant \dstab(I)+ \overline{\dstab(J)}-1$, and $(2)$ follows.

Finally, since $(3)$ follows from $(2)$, and  the proof  is complete.
\end{proof}

\begin{defn} \label{DEF1} Let $G$ be a graph. Let $G_1,\ldots,G_s$ be all connected bipartite components of $G$ and let $G_{s+1},\ldots,G_{s+t}$ be all connected nonbipartite components of $G$. Let $2k_i$ be the maximum length of cycles of $G_i$ ($k_i =1$ if $G_i$ is a tree) for all $i=1,\ldots,s$; and let $2k_i-1$ be the maximum length of odd cycles of $G_i$ for every $i = s+1,\ldots, s+t$; and let $2m-1$ be the minimum length of odd cycles of $G$. Define
$$\phi_1(G)=
\begin{cases}
\upsilon(G)-\varepsilon_0(G)-\sum_{i=1}^{s+t} k_i +1  & \text{ if } t = 0,\\
\upsilon(G)-\varepsilon_0(G)-\sum_{i=1}^{s+t} k_i +j+m  & \text{ if } t = 2j \text{ for } j\geqslant 1,\\
\upsilon(G)-\varepsilon_0(G)-\sum_{i=1}^{s+t} k_i +j+1  & \text{ if } t = 2j+1 \text{ for } j\geqslant 0.
\end{cases}
$$
\end{defn}

We now ready to prove the main result of this section.

\begin{thm}\label{T4} Let $G$ be a graph. Then, $\overline{\dstab}(I(G)) \leqslant \phi_1(G)$.
\end{thm}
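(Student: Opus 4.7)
The plan is to reduce the theorem to two already-established pieces via Lemma~\ref{D1}(3). Write $G = G' \cup G''$ where $G'$ is the union of the connected bipartite components $G_1,\ldots,G_s$ of $G$ and $G''$ is the union of the connected nonbipartite components $G_{s+1},\ldots,G_{s+t}$. If $t = 0$, then $G = G'$ is bipartite, $I(G)$ is normally torsion-free by Lemma~\ref{L03}, and so $\overline{\dstab}(I(G)) = \dstab(I(G))$. If $t \geqslant 1$, Lemma~\ref{D1}(3) gives
$$\overline{\dstab}(I(G)) = \dstab(I(G')) + \overline{\dstab}(I(G'')) - 1,$$
so it is enough to bound each summand separately in terms of the data attached to $G'$ and $G''$.

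For the bipartite part, I would invoke the known depth-stability bound for bipartite edge ideals (essentially \cite[Theorem 4.4]{Tr}, already used in the proof of Lemma~\ref{D1}), which in the present notation asserts
$$\dstab(I(G')) \leqslant \upsilon(G') - \varepsilon_0(G') - \sum_{i=1}^s k_i + 1,$$
where $2k_i$ is the maximum cycle length of $G_i$ (with $k_i = 1$ when $G_i$ is a tree). When $t = 0$ this already yields $\overline{\dstab}(I(G)) \leqslant \phi_1(G)$.

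For the nonbipartite part, I would combine Lemma~\ref{A3} with Lemma~\ref{HIO}. Applied to $G''$, whose components are all nonbipartite by construction, Lemma~\ref{A3} places the homogeneous maximal ideal in $\ass(R/\overline{I(G'')^n})$ for every $n \geqslant n_1(G'')$; Lemma~\ref{HIO} then propagates this membership to all larger $n$. Hence $\depth R/\overline{I(G'')^n} = 0$ for all $n \geqslant n_1(G'')$, which, in view of formula~(\ref{GG3}) from the proof of Lemma~\ref{D1}, forces
$$\overline{\dstab}(I(G'')) \leqslant n_1(G'').$$
Identifying the parameters $m_i$ of Lemma~\ref{A3} with the $k_{s+i}$ of Definition~\ref{DEF1}, and matching the minimum odd girth $2m - 1$ of $G''$ with that of $G$, splits this bound cleanly into the two subcases $t = 2j$ and $t = 2j+1$.

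The final step is a short case analysis: adding the two bounds and subtracting $1$, as prescribed by Lemma~\ref{D1}(3), produces $\upsilon(G) - \varepsilon_0(G) - \sum_{i=1}^{s+t} k_i + j + 1$ when $t = 2j+1$ and $\upsilon(G) - \varepsilon_0(G) - \sum_{i=1}^{s+t} k_i + j + m$ when $t = 2j$, matching Definition~\ref{DEF1} exactly. All substantive work is already packaged in Lemma~\ref{D1} (the binomial-expansion machinery from Section~2, via Proposition~\ref{T2}) and in Lemma~\ref{A3} (the convex-geometry construction of a witness monomial); the remaining arithmetic is bookkeeping. I do not anticipate a real obstacle, only the care needed to keep the three defining branches of $\phi_1(G)$ aligned with the parity of $t$.
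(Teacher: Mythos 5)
Your proof is correct and follows essentially the same route as the paper: decompose $G$ into its bipartite part $G'$ and its nonbipartite part $G''$, apply Lemma~\ref{D1}(3) to split $\overline{\dstab}(I(G))$ as $\dstab(I(G')) + \overline{\dstab}(I(G'')) - 1$, bound the first summand by the cited depth-stability result for bipartite graphs, and bound the second by $n_1(G'')$ via Lemma~\ref{A3}. The paper additionally separates out the case $s = 0$ (applying Lemma~\ref{A3} directly to get $\depth R/\overline{I(G)^n}=0$ for $n\geqslant\phi_1(G)$) rather than feeding $G' = \emptyset$ into Lemma~\ref{D1}(3), a small cleanup you should mirror, but otherwise the arithmetic and case analysis you describe are exactly the paper's argument.
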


\begin{proof} If $t=0$, i.e. $G$ is a bipartite graph, then $I(G)$ is normally torsion-free by Lemma $\ref{L03}$. Hence, $\overline{\dstab}(I(G)) = \dstab(I(G))$. The theorem follows from \cite[Theorem $4.6$]{Tr}.

If $s = 0$, i.e. every connected component of $G$ is nonbipartite. By Lemma $\ref{A3}$ we deduce that $\depth R/\overline{I(G)^n} =0$, for all $n\geqslant \phi_1 (G)$. Therefore, $\overline{\dstab}(I(G)) \leqslant \phi_1(G)$, and the theorem follows.

Assume that $s\geqslant 1$ and $t\geqslant 1$. In order to prove the theorem it suffices to show $\depth R/\overline{I(G)^n}=s$ for all $n\geqslant \phi_1(G)$.

Let $H = G_1 \cup \cdots \cup G_s$ and $W =G_{s+1}\cup\cdots\cup G_{s+t}$. Then $G = H\cup W$ and $I(G) = I(H)+I(W)$. Let $A = K[x_i\mid i\in V(H)]$ and $B = K[x_i\mid i\in V(W)]$. 
By Lemma \ref{D1} we have
\begin{equation}\label{DS1}
\overline{\dstab}(I(G)) = \dstab(I(H)) + \overline{\dstab}(I(W))-1.
\end{equation}

On the other hand, by \cite[Theorem 4.6]{Tr} we have
\begin{equation}\label{DS2}
\dstab(I(H)) \leqslant \upsilon(H)-\varepsilon_0(H)-\sum_{i=1}^{s} k_i +1 = \phi_1(H).
\end{equation}

Let $\mi_B = (x_i \mid i\in V(W))$ is the maximal homogeneous ideal of $B$. By Lemmas \ref{A3} and $\ref{D1}$, we imply that
\begin{equation}\label{DS3}
\overline{\dstab}(I(W))  = \min\{n\geqslant 1\mid \mi_B \in \ass(B/\overline{I(W)^n}\}\leqslant  \phi_1(W).
\end{equation}

Observe that $\phi_1(G) = \phi_1(H) + \phi_1(W)-1$. Together with Formulas $(\ref{DS1})$ - $(\ref{DS3})$, this equality forces
$$\overline{\dstab}(I(G)) = \dstab(I(H)) + \overline{\dstab}(I(W))-1\leqslant \phi_1(H) + \phi_1(W)-1 = \phi_1(G),$$
and theorem follows.
\end{proof}

\section{Pseudoforests}

In this section we show that the bounds in Theorems \ref{T3} and \ref{T4} are sharp for pseudoforests. Recall that a pseudoforest is a graph in which every connected component is either a tree or a unicyclic graph.

\begin{lem} \label{vertex-cover} Let $G$ be a graph and $C$ a vertex cover of a graph $G$. If $\x^{\alb} \in \overline{I(G)^n}$ with $n\geqslant 1$, then
$$\sum_{i\in C} \alpha_i \geqslant n.$$
\end{lem}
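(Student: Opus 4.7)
The plan is to reduce the statement to the well-known primary decomposition of the symbolic power $I(G)^{(n)}$ recorded in \eqref{s-power-cover}, using the inclusion $\overline{I(G)^n}\subseteq I(G)^{(n)}$ noted in the preliminaries. The point is that each factor $(x_i\mid i\in C'')^n$ in that decomposition already captures exactly the sort of linear inequality we want, so the conclusion should come out after selecting the right vertex cover to test against.

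Concretely, I would first observe that any vertex cover $C$ contains a minimal vertex cover $C'\subseteq C$ (iteratively remove redundant vertices). Then by \eqref{s-power-cover} applied to the single member $C'\in\Gamma(G)$,
$$I(G)^{(n)}=\bigcap_{C''\in\Gamma(G)}(x_i\mid i\in C'')^n\subseteq(x_i\mid i\in C')^n.$$
Combining this with $\overline{I(G)^n}\subseteq I(G)^{(n)}$ and the hypothesis $\x^{\alb}\in\overline{I(G)^n}$, I get $\x^{\alb}\in(x_i\mid i\in C')^n$, which by the very definition of the $n$-th power of a monomial prime means $\sum_{i\in C'}\alpha_i\geqslant n$. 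Since the $\alpha_i$ are nonnegative and $C'\subseteq C$, this gives $\sum_{i\in C}\alpha_i\geqslant\sum_{i\in C'}\alpha_i\geqslant n$.

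There is no real obstacle: the argument is a direct two-line application of the facts already established. If one prefers, an alternative proof runs through the Newton polyhedron: the linear functional $\ell(\x)=\sum_{i\in C}x_i$ has nonnegative coefficients and satisfies $\ell(\e_i+\e_j)\geqslant 1$ for every edge $\{i,j\}\in E(G)$ (because $C$ covers every edge), so $\ell\geqslant 1$ on $E(G(I(G)))$ and hence on $NP(I(G))$; by \eqref{EN2} this forces $\ell\geqslant n$ on $NP(I(G)^n)$, and then \eqref{EN1} translates this into $\sum_{i\in C}\alpha_i\geqslant n$ for every $\alb\in E(\overline{I(G)^n})$.
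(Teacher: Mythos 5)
Your primary argument is exactly the paper's proof: reduce to a minimal vertex cover $C'\subseteq C$, use the containment $\overline{I(G)^n}\subseteq I(G)^{(n)}$ together with the primary decomposition \eqref{s-power-cover} to get $\x^{\alb}\in(x_i\mid i\in C')^n$, and read off the linear inequality. The Newton-polyhedron variant you sketch at the end is a correct alternative but not needed; the main argument matches the paper.
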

\begin{proof} We may assume that $C$ is a minimal vertex cover. Since $\overline{I(G)^n} \subseteq I(G)^{(n)}$, one has $\x^{\alb} \in I(G)^{(n)}$. On the other hand, by Formula $(\ref{s-power-cover})$, $I(G)^{(n)}$ has the primary decomposition as folows
$$I(G)^{(n)} = \bigcap_{C \in \Gamma(G)} (x_i\mid i\in C)^n,$$
where $\Gamma(G)$ is the set of all minimal vertex cover sets of $G$.  In particular, it yields $\x^{\alb} \in  (x_i\mid i\in C)^n$, and so
$$\sum_{i\in C} \alpha_i \geqslant n,$$
as required.
\end{proof}

\begin{lem}\label{NTIJ} Let $G$ be an odd cycle and $\x^{\alb}$ a monomial of $R$. Assume that for every minimal vertex cover of $G$, we have
$$\sum_{i\in C} \alpha_i \geqslant \sum_{i\notin C}\alpha_i +1.$$
Then, $\alpha_i \geqslant 1$ for every $i=1,\ldots,r$.
\end{lem}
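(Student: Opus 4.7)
The plan is to exhibit, for each vertex $v$ of the odd cycle $G$, a pair of minimal vertex covers $C_1, C_2$ that both contain $v$ and partition the other vertices between them, in the sense that $C_1 \cup C_2 = V(G)$ and $C_1 \cap C_2 = \{v\}$. Averaging the hypothesis inequality applied to $C_1$ and to $C_2$ with weight $\tfrac12$ each should then collapse all $\alpha_i$ with $i \neq v$ (their coefficients cancel because each such $i$ lies in exactly one of $C_1, C_2$) and leave precisely $\alpha_v$ on the left, with lower bound $\tfrac12 \cdot 1 + \tfrac12 \cdot 1 = 1$.

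By the rotational symmetry of the cycle it suffices to treat a single vertex, say vertex $1$, after relabelling $V(G) = \{1, 2, \ldots, 2k+1\}$. I would write down the two candidates
$$C_1 = \{1, 3, 5, \ldots, 2k+1\}, \qquad C_2 = \{1, 2, 4, \ldots, 2k\},$$
and check that each is a minimal vertex cover of $G$: $C_1$ is the standard odd-indexed cover of $C_{2k+1}$, while $C_2$ is obtained by keeping vertex $1$ and flipping the parity of every other chosen vertex. Minimality in each case amounts to observing that removing any element leaves an adjacent edge uncovered. This verification is routine combinatorics and is really the only step with any content.

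Once the two covers are in hand, I would apply the hypothesis $\sum_{i\in C_j}\alpha_i \geq \sum_{i \notin C_j}\alpha_i + 1$ for $j = 1, 2$ and add the two inequalities with weight $\tfrac12$ each. Because $C_1 \cap C_2 = \{1\}$ and $C_1 \cup C_2 = V(G)$, every vertex $i \neq 1$ appears in exactly one of the two covers and the complement of the other, so the net coefficient of $\alpha_i$ on the left is $\tfrac12 - \tfrac12 = 0$, while the coefficient of $\alpha_1$ is $\tfrac12 + \tfrac12 = 1$. The resulting inequality reads $\alpha_1 \geq 1$, and the same construction after a cyclic rotation of labels gives $\alpha_v \geq 1$ for every $v \in V(G)$.

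The main (and essentially only) obstacle is producing the two covers with the stated intersection/union properties; everything else is linear bookkeeping. It is worth noting that the argument crucially uses $G$ being an \emph{odd} cycle: this is precisely what makes $C_1$ a vertex cover (each even vertex is sandwiched between two chosen odd vertices, and the wrap-around edge $\{2k+1, 1\}$ is covered by both endpoints) and forces the shifted set $C_2$ to pick up the same number $k+1$ of vertices.
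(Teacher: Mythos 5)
Your proposal is correct and matches the paper's proof exactly: both pick, for the target vertex $v$, the minimal covers $C_1 = \{1, 3, \ldots, 2m+1\}$ and $C_2 = \{1, 2, 4, \ldots, 2m\}$ (after rotating so $v = 1$), then add the two hypothesis inequalities so that all terms except $\alpha_1$ cancel, leaving $2\alpha_1 \geqslant 2$. The only cosmetic difference is that you average with weight $\tfrac12$ while the paper adds, which is the same argument.
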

\begin{proof} Without loss of generality we prove that $\alpha_1=0$. Suppose that $V(G) = \{1,2,\ldots,2m+1\}$ for some $m\geqslant 1$. By applying the assumption for  the minimal vertex cover
$C_1 = \{1,3,\ldots,2m+1\}$, we have
$$\sum_{i=0}^m \alpha_{2i+1}\geqslant \sum_{i=1}^m \alpha_{2i} + 1,$$
and by applying the assumption for  the minimal vertex cover
$C_2 = \{1,2,4,\ldots, 2m\}$, we have
$$\alpha_1+\sum_{i=1}^m \alpha_{2i} \geqslant  \sum_{i=1}^m \alpha_{2i+1} + 1.$$

By adding two corresponding sides of two inequalities above together, we obtain
$$2\alpha_1 +\sum_{i=2}^{2m+1}\alpha_i \geqslant \sum_{i=2}^{2m+1}\alpha_i +2.$$
Hence, $\alpha_1\geqslant 1$. 
\end{proof}

The following lemma can verify straightforward.

\begin{lem} \label{sys-edge} Let $\alpha_1,\ldots,\alpha_{2m+1} \in \R$ where $m\geqslant 1$. Then, the linear system
$$\begin{cases}
x_1 + x_2 = \alpha_1\\
x_2+x_3=\alpha_2\\
\cdots\\
x_{2m}+x_{2m+1} = \alpha_{2m}\\
x_{2m+1} + x_1 = \alpha_{2m+1},
\end{cases}
$$
has the unique solution as follows. For each $i = 1,\ldots, 2m+1,$
$$
x_i = \frac{1}{2}\sum_{j=0}^{2m}(-1)^j \alpha_{i+j} =  \frac{1}{2}(\alpha_i-\alpha_{i+1}+\cdots+\alpha_{i+2m-2}-\alpha_{i+2m-1} + \alpha_{i+2m}),
$$
where we use the convention that $\alpha_{i+2m+2}=\alpha_i$ for $i=1,\ldots,2m+1$.
\end{lem}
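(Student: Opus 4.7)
The plan is to split into existence and uniqueness. For existence, I would substitute the proposed closed form into each equation and watch the alternating sum telescope around the cycle; for uniqueness, I would check that the associated homogeneous system has only the trivial solution, which is exactly where the odd length $2m+1$ of the cycle is essential.

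For existence, I would fix $i$ and compute
$$x_i + x_{i+1} = \frac{1}{2}\sum_{j=0}^{2m}(-1)^j \alpha_{i+j} + \frac{1}{2}\sum_{j=0}^{2m}(-1)^j \alpha_{i+1+j}.$$
Reindexing the second sum via $k = j+1$ and using the cyclic convention $\alpha_{i+2m+1} = \alpha_i$ together with $(-1)^{2m+1} = -1$, every pair of terms with $1 \leqslant j \leqslant 2m$ from the first sum cancels against the matching term coming out of the second. What survives is the $j=0$ contribution of the first sum, namely $\tfrac{1}{2}\alpha_i$, and the $k=2m+1$ boundary contribution of the second sum, which also equals $\tfrac{1}{2}\alpha_i$. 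Adding the two halves gives $\alpha_i$, verifying the $i$-th equation.

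For uniqueness, I would argue directly on the homogeneous system $x_i + x_{i+1} = 0$ for $i=1,\ldots,2m+1$. This forces $x_{i+1} = -x_i$ for every $i$, so iterating around the cycle of odd length produces $x_1 = (-1)^{2m+1}x_1 = -x_1$, hence $x_1 = 0$ and then $x_i = 0$ for all $i$. Equivalently, the coefficient matrix of the system is invertible, so the closed form displayed in the lemma is the unique solution.

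There is no real obstacle here; the argument is purely formal linear algebra, and the only delicate point is keeping the cyclic indexing and the sign changes consistent. It is worth highlighting that the oddness of $2m+1$ enters in two essentially equivalent places: through the parity cancellation $(-1)^{2m+1}=-1$ that closes the telescoping for existence, and through the same parity in the uniqueness argument, which would fail for a cycle of even length.
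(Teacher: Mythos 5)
The paper gives no proof here at all; it only remarks that the lemma "can verify straightforward," so there is no argument to compare against. Your proof is a correct and complete verification of exactly the two things that need checking. For existence, the telescoping computation
\[
x_i + x_{i+1} = \tfrac{1}{2}\sum_{j=0}^{2m}(-1)^j \alpha_{i+j} + \tfrac{1}{2}\sum_{k=1}^{2m+1}(-1)^{k-1} \alpha_{i+k}
\]
is right: the coefficients of $\alpha_{i+l}$ for $1\leqslant l\leqslant 2m$ cancel, leaving $\tfrac{1}{2}\alpha_i$ from the first sum and $\tfrac{1}{2}(-1)^{2m}\alpha_{i+2m+1}=\tfrac{1}{2}\alpha_i$ from the second, once one uses the cyclic convention $\alpha_{i+2m+1}=\alpha_i$. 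For uniqueness, the parity argument $x_1=(-1)^{2m+1}x_1=-x_1$ is the standard reason the incidence matrix of an odd cycle is invertible (over any field of characteristic $\neq 2$), and it is correctly isolated as the place where oddness matters. One small but worthwhile observation: the convention printed in the statement, $\alpha_{i+2m+2}=\alpha_i$, appears to be a typo; the intended period is $2m+1$, i.e., $\alpha_{i+2m+1}=\alpha_i$, and your proof silently (and correctly) uses the latter. With $2m+1$ indices the formula needs $\alpha_{i+j}$ defined for $i+j$ up to $4m+1$, which requires reduction modulo $2m+1$, not $2m+2$.
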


\begin{lem} \label{EX001} Let $G$ be a graph such that $\mi = \overline{I(G)^n} \colon \x^{\alb}$ for some $\alb \in \N^r$ and $n\geqslant 1$. Then,
\begin{enumerate}
\item $|\alb| = 2n-1$.
\item If $G$ is a cycle, then $\alpha_i \geqslant 1$ for all $i$.
\end{enumerate}
\end{lem}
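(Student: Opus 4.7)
For part (1), I would extract information geometrically via the Newton polyhedron. By Lemma~\ref{NPH}, $NP(I(G))$ is cut out by inequalities $\langle \a_j,\x\rangle \geq 1$ together with nonnegativity, so $NP(I(G)^n)$ is cut out by $\langle \a_j,\x\rangle \geq n$ and $x_k \geq 0$. The hypothesis $\mi = \overline{I(G)^n}\colon \x^{\alb}$ translates via~(\ref{EN1})--(\ref{EN2}) to the statement that $\alb \notin NP(I(G)^n)$ but $\alb + \e_k \in NP(I(G)^n)$ for every $k$. Since $\alb \in \N^r$, the violated inequality cannot be $x_k \geq 0$, so there is a ``blocking'' facet $j_0$ with $\langle \a_{j_0},\alb\rangle < n$ while $\langle \a_{j_0},\alb\rangle + a_{j_0,k} \geq n$ for every $k$. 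In particular $a_{j_0,k} > 0$ for every $k$, so by Lemma~\ref{NPH} the facet $\langle \a_{j_0},\x\rangle = 1$ contains $r$ affinely independent points of $E(G(I(G)))$. But every generator of $I(G)$ has the form $\e_i + \e_{i'}$ and lies on the hyperplane $\tfrac12\sum x_k = 1$; since a hyperplane is uniquely determined by $r$ affinely independent points, the facet $j_0$ must coincide with this hyperplane, so $\a_{j_0} = (\tfrac12,\ldots,\tfrac12)$. The two inequalities above now read $|\alb| < 2n$ and $|\alb| + 1 \geq 2n$, giving $|\alb| = 2n-1$.

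For part (2), if $G = C_{2m}$ is an even cycle then $G$ is bipartite, so by Lemma~\ref{L03} $\overline{I(G)^n} = I(G)^n$ and $\ass(R/I(G)^n) = \ass(R/I(G))$ consists only of primes generated by minimal vertex covers of $G$. Since $V(G)$ is never a minimal vertex cover of a cycle (removing any vertex still leaves a vertex cover), we have $\mi \notin \ass(R/\overline{I(G)^n})$, so the hypothesis is vacuous. Hence we may assume $G = C_{2m+1}$. I would then apply Lemma~\ref{NTIJ}, which requires verifying $\sum_{i\in C}\alpha_i \geq \sum_{i\notin C}\alpha_i + 1$ for every minimal vertex cover $C$ of $C_{2m+1}$. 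Since each such $C$ is a proper subset of $V(G)$, pick any $k \notin C$; then $x_k\x^{\alb} \in \overline{I(G)^n}$ together with Lemma~\ref{vertex-cover} give $\sum_{i\in C}\alpha_i \geq n$. Combining with part (1), $\sum_{i\notin C}\alpha_i = (2n-1) - \sum_{i\in C}\alpha_i \leq n-1$, so the desired inequality holds, and Lemma~\ref{NTIJ} concludes that $\alpha_i \geq 1$ for every $i$.

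The main obstacle is part (1): correctly identifying the blocking facet of $NP(I(G))$. The key leverage is the structural content of Lemma~\ref{NPH}, which ties positivity of the coordinates of $\a_{j_0}$ to the number of affinely independent generators lying on the facet; the hypothesis forces all $a_{j_0,k}>0$, and then the fact that every edge-generator sits on $\tfrac12\sum x_k = 1$ pins the facet down uniquely. Once this is established, the rest of part (1) is arithmetic, and part (2) is a clean combinatorial consequence of the degree equality together with Lemmas~\ref{vertex-cover} and~\ref{NTIJ}.
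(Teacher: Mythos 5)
Your proof is correct and follows essentially the same route as the paper's: both parts rely on Lemma~\ref{NPH} to identify the unique blocking facet $\tfrac12\sum x_k = 1$ of $NP(I(G))$, giving $|\alb|=2n-1$, and then combine Lemma~\ref{vertex-cover} with Lemma~\ref{NTIJ} for part (2). If anything you are slightly more careful than the paper at two points, namely spelling out why the blocking facet must be $\tfrac12\sum x_k = 1$ (via the count of $r$ affinely independent generator points) and explicitly choosing $k\notin C$ so that Lemma~\ref{vertex-cover} can be applied to $\x^{\alb+\e_k}\in\overline{I(G)^n}$ rather than to $\x^{\alb}\notin\overline{I(G)^n}$.
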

\begin{proof} For simplicity, let $I = I(G)$. For each supporting hyperplane $\left<\a,\x\right> = 1$ of $NP(I)$ parallel with some direction $\e_i$ which means $a_i =0$, since $\x^{\alb}x_i =\x^{\alb+\e_i}\in \overline{I^n}$, we have
$$\left<\a,\alb\right> = \left<\a,\alb+\e_i\right> \geqslant n.$$

Since $\x^{\alb}\notin \overline{I^n}$, by Lemma \ref{NPH} we must have $\left<\b,\alb\right> < n$ where $\left<\b,\x\right> = 1$ is a supporting hyperplane of $NP(I)$ which is not parallel to any direction $\e_i$ for $i=1,\ldots,r$. Since each generators of $I(G)$ is of degree $2$, by Lemma \ref{NPH} the separating hyperplane $\left<\b,\x\right> = 1$ is of the form
$$\frac{1}{2}(x_1+\cdots +x_r) = 1.$$

It follows that $|\alb| = \alpha_1+\cdots+\alpha_2 \leqslant 2n-1$. On the other hand, since $\x^\alpha x_r = \x^{\alb+\e_r} \in \overline{I^n}$ we have
$$\frac{1}{2}(\alpha_1+\cdots +\alpha_{r-1}+(\alpha_r+1)) \geqslant n,$$
or equivalently, $\alpha_1+\cdots+\alpha_r \geqslant 2n-1$. Thus,
\begin{equation}\label{cycle-eq}
\alpha_1+\cdots+\alpha_r = 2n-1,
\end{equation}
and $(1)$ follows.

Now assume $G$ be a cycle. Then, it must be an odd cycle by \cite[Proposition 3.3]{MMV}. Let $C$ be a minimal vertex of $G$. Then, by Lemma \ref{vertex-cover}
\begin{equation}\label{cycle}
\sum_{i\in C} \alpha_i \geqslant n.
\end{equation}
Together with Formula (\ref{cycle-eq}) we have
$$\sum_{i\notin C}\alpha_i + 1 = |\alpha| - \sum_{i\in C}\alpha_i +1 \leqslant 2n-1-n + 1 = n \leqslant \sum_{i\in C}\alpha_i.$$
Therefore, $\alpha_i \geqslant 1$ for all $i$ by Lemma \ref{NTIJ}, and the lemma follows.
\end{proof}

\begin{lem}\label{EX002} Let $G$ be a graph consisting of disjoint cycles $C_1,\ldots, C_p$, where $p\geqslant 2$. Assume that $\mi = \overline{I(G)^n}\colon \x^{\alb}$ for some $\alb\in\N^r$ and $n\geqslant 1$. We have:
\begin{enumerate}
\item $\alpha_i \geqslant 1$ for all $i$.
\item If $\sum_{i\in V(C_j)} \alpha_i =2k$ for some $j$ and $k \geqslant 1$, then $\alpha_i \geqslant 2$ for all $i\in V(C_j)$.
\end{enumerate}
\end{lem}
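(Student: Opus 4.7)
The plan is to combine the Newton-polyhedron description of the integral closure with the disjoint-cycle decomposition $I(G)=I(C_1)+\cdots+I(C_p)$. Since $\mi\in\ass(R/\overline{I(G)^n})$, every connected component of $G$ must be nonbipartite by \cite[Proposition~3.3]{MMV}, so each $C_k$ is an odd cycle; moreover, Lemma~\ref{EX001}(1) gives $\sum_{k=1}^{p}s_k=2n-1$, where $s_k:=\sum_{i\in V(C_k)}\alpha_i$. Because the $I(C_k)$'s live in pairwise disjoint sets of variables, unpacking the Minkowski sum of the Newton polyhedra (together with the facet description from Lemma~\ref{NPH}: for an odd cycle $C_k$, the facets of $NP(I(C_k))$ are the coordinate inequalities, the minimal-vertex-cover inequalities $\sum_{i\in C'}x_i\geqslant 1$ over minimal vertex covers $C'$ of $C_k$, and the half-sum inequality $\sum_{i\in V(C_k)}x_i\geqslant 2$) yields the criterion: $\x^{\btb}\in\overline{I(G)^m}$ if and only if $\sum_k\psi_k(\btb|_{V(C_k)})\geqslant m$, where $\psi_k(\btb):=\min(A_k(\btb),\,s_k(\btb)/2)$ and $A_k(\btb):=\min_{C'}\sum_{i\in C'}\beta_i$ ranges over minimal vertex covers $C'$ of $C_k$.

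Write $\psi_k=\psi_k(\alb|_{V(C_k)})$ and $A_k=A_k(\alb|_{V(C_k)})$. From $\x^{\alb}\notin\overline{I(G)^n}$, $\sum_k\psi_k<n$; and from $\x^{\alb+\e_i}\in\overline{I(G)^n}$ for every $i\in V(C_j)$ one obtains $\psi_j(\alb|_{V(C_j)}+\e_i)\geqslant N_j:=n-\sum_{k\ne j}\psi_k$. Since $\psi_j(\alb|_{V(C_j)}+\e_i)\leqslant (s_j+1)/2$, we have $N_j\leqslant (s_j+1)/2$. Moreover, for any minimal vertex cover $C'$ of $C_j$, picking $i\in V(C_j)\setminus C'$ (possible since $C'\ne V(C_j)$) gives
$$\sum_{l\in C'}\alpha_l \;=\; \sum_{l\in C'}(\alpha_l+(\e_i)_l) \;\geqslant\; A_j(\alb|_{V(C_j)}+\e_i) \;\geqslant\; \psi_j(\alb|_{V(C_j)}+\e_i) \;\geqslant\; N_j,$$
so $A_j\geqslant N_j$ for every $j$. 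Summing the inequalities $s_j\geqslant 2N_j-1$ and using $\sum_j s_j=2n-1$ and $\sum_j N_j=pn-(p-1)\sum_k\psi_k$ forces $(p-1)\sum_k\psi_k\geqslant (p-1)(n-1/2)$; since $p\geqslant 2$ we obtain $\sum_k\psi_k\geqslant n-1/2$. The trivial bound $\sum_k\psi_k\leqslant\sum_k s_k/2=n-1/2$ then gives equality throughout, whence $\psi_k=s_k/2$ (and in particular $A_k\geqslant s_k/2$) for every $k$, $N_j=(s_j+1)/2$, and $2A_j\geqslant s_j+1$ for every $j$.

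For part~(1), the inequality $2A_j\geqslant s_j+1$ is exactly the hypothesis $\sum_{i\in C'}\alpha_i\geqslant\sum_{i\in V(C_j)\setminus C'}\alpha_i+1$ of Lemma~\ref{NTIJ} applied to the odd cycle $C_j$, yielding $\alpha_i\geqslant 1$ for every $i\in V(C_j)$; ranging over all $j$ proves part~(1).

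For part~(2), assume $s_j=2k$ with $k\geqslant 1$. Then $A_j\geqslant (s_j+1)/2=k+1/2$, and integrality of $A_j$ forces $A_j\geqslant k+1$. Fix $v\in V(C_j)$ and label the cycle as $v=v_1,v_2,\ldots,v_{2d+1}$. The sets $D=\{v_1,v_2,v_4,\ldots,v_{2d}\}$ and $D'=\{v_1,v_3,v_5,\ldots,v_{2d+1}\}$ are each minimal vertex covers of $C_j$ ($v_1$ covers the two edges incident to it, and the alternating choices on the path $v_2,\ldots,v_{2d+1}$ give the two complementary minimal vertex covers of that path); moreover $D\cap D'=\{v_1\}$ and $D\cup D'=V(C_j)$. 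Adding $\sum_{i\in D}\alpha_i\geqslant A_j$ and $\sum_{i\in D'}\alpha_i\geqslant A_j$ yields
$$s_j+\alpha_{v_1} \;=\; 2\alpha_{v_1}+\sum_{i\in V(C_j)\setminus\{v_1\}}\alpha_i \;=\; \sum_{i\in D}\alpha_i+\sum_{i\in D'}\alpha_i \;\geqslant\; 2A_j \;\geqslant\; 2(k+1) \;=\; s_j+2,$$
so $\alpha_{v_1}\geqslant 2$; by relabeling, $\alpha_i\geqslant 2$ for every $i\in V(C_j)$. The main technical obstacle is establishing the explicit formula $\psi_k=\min(A_k,s_k/2)$ together with the Minkowski-type decomposition of $NP(I(G)^m)$ in terms of the $NP(I(C_k)^{m_k})$'s; once these are in hand, the remaining arithmetic and combinatorial steps are routine.
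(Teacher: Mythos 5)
Your proof is correct, and it takes a genuinely different route from the paper's for both parts. Both arguments exploit the Newton-polyhedron picture and Lemma~\ref{NTIJ}, but the paper's proof of part~(1) works one cycle at a time: it constructs the single valid inequality $\sum_{i\in C}x_i+\tfrac12\sum_{i\notin V(C_1)}x_i\geqslant 1$, evaluates it at $\alb+\e_i$ for $i\in V(C_1)\setminus C$, and combines it with $|\alb|=2n-1$ to get $\sum_{i\in C}\alpha_i\geqslant\sum_{i\in V(C_1)\setminus C}\alpha_i+1$; and for part~(2) it expands $\alb+\e_r$ as a conic combination of edge vectors, solves the cyclic linear system via Lemma~\ref{sys-edge}, shows $\lambda_1,\lambda_{2m+1}\geqslant 1/2$, and uses a parity contradiction to rule out $\alpha_1=1$. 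Your argument instead defines $\psi_k=\min(A_k,s_k/2)$, uses the disjoint-support decomposition of $NP(I(G))$ to get the criterion $\sum_k\psi_k\geqslant m \Leftrightarrow \x^{\btb}\in\overline{I(G)^m}$, and squeezes $\sum_k\psi_k$ to equal $n-1/2$ exactly (this is where $p\geqslant 2$ is used). This gives $A_j\geqslant(s_j+1)/2$ for every $j$ at once, which yields part~(1) via Lemma~\ref{NTIJ}; for part~(2), integrality upgrades $A_j\geqslant k+1/2$ to $A_j\geqslant k+1$, and a neat inclusion--exclusion on the two minimal vertex covers $D,D'$ with $D\cap D'=\{v\}$, $D\cup D'=V(C_j)$ gives $\alpha_v\geqslant 2$ directly, replacing the paper's linear-system-plus-parity argument. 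Your route is arguably more conceptual and dispenses with the explicit edge decomposition. Two small remarks. First, you assert without proof that the facets of $NP(I(C))$ for an odd cycle $C$ are exactly the coordinate, minimal-vertex-cover and half-sum inequalities; this is a classical fact (equivalent to the description of the fractional vertex cover polyhedron of an odd cycle), but a citation would tighten the exposition. In fact a careful reading shows your argument only ever uses the ``valid inequality'' direction $\psi_k\leqslant\min(A_k,s_k/2)$ and the ``only if'' direction of the membership criterion, which hold without knowing the full facet list, so the reliance on the exact description is avoidable. Second, the statement ``$\sum_k\psi_k<n$'' in your second paragraph is never actually used and could be dropped.
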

\begin{proof} $(1)$ Fix arbitrary index $i$, we need to prove that $\alpha_i \geqslant 1$. Without loss of generality, we may assume that $i\in V(C_1)$. Let $C$ be a minimal cover of $C_1$. We first claim that
\begin{equation}\label{first-claim}
\sum_{i\in C_1} \alpha_i \geqslant \sum_{i\in V(C_1)\setminus C}\alpha_i + 1.
\end{equation}

Indeed, by Lemma \ref{NTIJ}, every $\btb\in \N^r$ with $\x^{\btb}\in I(C_1)$ is a solution of
$$\sum_{i\in C} x_i \geqslant 1.$$

Next,  every $\btb\in \N^r$ with $\x^{\btb}\in I(H)$ also a solution of
$$\frac{1}{2}(x_{2m+2}+\cdots +x_r) \geqslant 1$$
since if it holds true whenever $\x^{\alb}$ is a generator of $I(G)$.

From two inequalities above, we conclude that every every $\btb\in \N^r$ with $\x^{\btb}\in I(G)$ is a solution of 
$$\sum_{i\in C} x_i  + \frac{1}{2}(x_{2m+2}+\cdots +x_r) \geqslant 1.$$
It follows that all $\btb\in \N^r$ with $\x^{\btb}\in \overline{I(G)^n}$ is a solution of 
\begin{equation}\label{EXE01}
\sum_{i\in C} x_i  + \frac{1}{2}(x_{2m+2}+\cdots +x_r) \geqslant n.
\end{equation}

Now let $i\in V(G_1)$ such that $i\notin C$. Since $x_i\x^{\alb} \in \overline{I^n}$, by Inequality $(\ref{EXE01})$ we imply that
$$\sum_{i\in C} \alpha_i + \frac{1}{2}\sum_{i \geqslant 2m+2}\alpha_i \geqslant n.$$
Hence,
$$\sum_{i\in C} \alpha_i - \sum_{i\in V(G_1)\setminus C} \alpha_i +|\alb| \geqslant 2n.$$
On the other hand, by Lemma \ref{EX001} we have $|\alb| = 2n-1$, so
$$\sum_{i\in C} \alpha_i - \sum_{i\in V(G_1)\setminus C} \alpha_i \geqslant 1,$$
and the claim follows.

Now by Lemma \ref{NTIJ} and this claim we obtain $\alpha_i \geqslant 1$ for $i\in V(C_1)$, and $(1)$ holds.

\medskip

$(2)$ We may assume that $\sum_{i\in V(C_1)} \alpha_i =2k$. Let $H = C_2\cup\cdots\cup C_p$. For simplicity, we set $I = I(C_1)$,  $J = I(H)$ and  $E(G(J)) = \{\f_1,\ldots,f_q\}$ for $q\geqslant 1$. Note  that $|\f_j| = 2$ for $j=1,\ldots,q$.
Note also that $$E(G(I)) = \{\e_1+\e_2,\e_2+\e_3,\ldots,\e_{2m}+\e_{2m+1}, \e_{2m+1},\e_1\}.$$

\medskip

Since $\x^{\alb} x_{r} \in \overline{I(G)^n}$, we have
$$\alb+\e_r =\sum_{i=1}^{2m+1} \lambda_i (\e_i+\e_{i+1}) + \sum_{i=1}^{q} \gamma_j \f_j + \v.$$
where $\lambda_i\geqslant 0, \gamma_j\geqslant 0$ and $\v\in \R_+^r$ such that 
$$\sum_{i=1}^{2m+1} \lambda_i + \sum_{i=1}^{q} \gamma_j = n.$$
Since $|\alb+\e_r| = 2n$, it follows that $|\v| = 0$, whence $\v = 0$. Therefore,
$$\alb + \e_r =\sum_{i=1}^{2m+1} \lambda_i (\e_i+\e_{i+1}) + \sum_{i=1}^{q} \gamma_j \f_j.$$

Note also that $r > 2m+1$. By comparing the first $2m+1$ coordinates of two vectors in $\R^r$ in both sides we get the following linear system
$$
\begin{cases}
\alpha_1 = \lambda_1 + \lambda_{2m+1}\\
\alpha_2 = \lambda_1 + \lambda_2\\
\cdots\\
\alpha_{2m} = \lambda_{2m-1}+\lambda_{2m}\\
\alpha_{2m+1} = \lambda_{2m}+\lambda_{2m+1}.
\end{cases}
$$
By Lemma \ref{sys-edge} we have 
\begin{equation}\label{abc01}
\lambda_{1} = \frac{1}{2}(\alpha_2-\alpha_3+\cdots+\alpha_{2m}-\alpha_{2m+1}+\alpha_1),
\end{equation}
and
\begin{equation}\label{abc02}
\lambda_{2m+1} = \frac{1}{2}(\alpha_1-\alpha_2+\cdots+\alpha_{2m-1}-\alpha_{2m}+\alpha_{2m+1}).
\end{equation}

On the other hand, since $C_1 = \{2,4,\ldots,2m,1\}$ and $C_2 = \{1,3,\ldots, 2m+1\}$ are vertex covers of $G$, together with Lemma \ref{NTIJ} and Formulas $(\ref{abc01})$ and $(\ref{abc02})$ we deduce that $\lambda_1\geqslant 1/2$ and $\lambda_{2m+1}\geqslant 1/2$.

Now we prove that $\alpha_1 \geqslant 2$. Assume on the contrary that $\alpha_1 < 2$. Together with Part $(1)$, this fact forces $\alpha_1 = 1$. Since $\alpha_1 = \lambda_1 + \lambda_{2m+1}$ and $\lambda_1\geqslant 1/2$ and $\lambda_{2m+1} \geqslant 1/2$, we have $\lambda_1=1/2$. Together with Formula $(\ref{abc01})$, it gives
$$\alpha_2-\alpha_3+\cdots+\alpha_{2m}-\alpha_{2m+1} = 2\lambda_1 -\alpha_1 = 0,$$
or
$$\alpha_2+\cdots+\alpha_{2m} = \alpha_3 +\cdots+\alpha_{2m+1}.$$
It follows that
$$\sum_{i\in V(C_1)} \alpha_i = 2(\alpha_3+\cdots+\alpha_{2m+1}) + \alpha_1 = 2(\alpha_3+\cdots+\alpha_{2m+1}) + 1,$$
which is an odd number. This contradict the assumption that $\sum_{i\in V(C_1)} \alpha_i = 2k$, and so $\alpha_1\geqslant 2$. The proof of the lemma is complete.
\end{proof}

The following lemma is the same as \cite[Lemma 2.10]{MO} for ordinal powers of edge ideals

\begin{lem}\label{leaf} Let $G$ be a graph and $v$ is an its leaf. Assume that $u$ is the unique neighbor of $u$ in $G$. Then, for all $n\geqslant 2$, we have $\overline{I(G)^n} \colon x_ux_v = \overline{I(G)^{n-1}}$.
\end{lem}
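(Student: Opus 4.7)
The plan is to translate the colon-ideal identity into a statement about the Newton polyhedron $P := NP(I(G))$ via Equations \eqref{EN1} and \eqref{EN2}, and then verify it facet-by-facet using Lemma \ref{NPH}.

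The inclusion $\overline{I(G)^{n-1}}\subseteq \overline{I(G)^n}\colon x_u x_v$ is immediate from Lemma \ref{I1}: since $x_u x_v\in I(G)\subseteq \overline{I(G)}$, one has $x_u x_v\cdot\overline{I(G)^{n-1}}\subseteq \overline{I(G)}\cdot\overline{I(G)^{n-1}}\subseteq \overline{I(G)^n}$. For the reverse inclusion, since both sides are monomial ideals, it suffices to show that $\x^{\alb} x_u x_v\in\overline{I(G)^n}$ implies $\x^{\alb}\in\overline{I(G)^{n-1}}$. By Equations \eqref{EN1} and \eqref{EN2}, this reduces to proving that $\alb+\e_u+\e_v\in nP$ implies $\alb\in(n-1)P$.

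The heart of the argument is the following claim: \emph{for every facet inequality $\langle\a,\x\rangle\geqslant 1$ of $P$ provided by Lemma \ref{NPH}, one has $\a_u+\a_v\leqslant 1$.} Given the claim, for any such $\a$ we have $\langle\a,\alb\rangle=\langle\a,\alb+\e_u+\e_v\rangle-\a_u-\a_v\geqslant n-1$, while $\alb\in\R_+^r$ trivially; Lemma \ref{NPH} then yields $\alb\in(n-1)P$, finishing the proof.

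The claim itself is the main obstacle, and is where the leaf hypothesis enters crucially: since $v$ is a leaf with unique neighbor $u$, the only element of $E(G(I(G)))$ with positive $v$-coordinate is $\e_u+\e_v$. I would prove it in two cases. If $\a_v>0$, then $\e_v$ is not among the parallel directions of the facet $F$; if moreover $\e_u+\e_v\notin F$, then every generator on $F$ (having $v$-coordinate $0$) and every recession direction of $F$ (lying in $\R_+^r\cap\{\langle\a,\cdot\rangle=0\}\subseteq\{x_v=0\}$) would sit in $\{x_v=0\}$, forcing the $(r-1)$-dimensional facet $F$, and hence its affine hull $\{\langle\a,\x\rangle=1\}$, to coincide with $\{x_v=0\}$; this is impossible since $\{\langle\a,\x\rangle=1\}$ does not pass through the origin. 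Hence $\e_u+\e_v\in F$ and $\a_u+\a_v=1$. If instead $\a_v=0$, suppose for contradiction that $\a_u>1$: any generator $\e_p+\e_q$ on $F$ satisfies $\a_p+\a_q=1$, so $u\in\{p,q\}$ would force a negative entry of $\a$; therefore no generator on $F$ involves $u$, and the same dimension argument forces $\{\langle\a,\x\rangle=1\}=\{x_u=0\}$, again a contradiction. Thus $\a_u\leqslant 1$ and $\a_u+\a_v\leqslant 1$, completing the claim and the lemma.
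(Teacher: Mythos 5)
Your proof is correct, and both inclusions are handled properly, but the argument takes a genuinely different route from the paper's. The paper transfers the problem to ordinary powers: from $f x_u x_v\in\overline{I(G)^n}$ it passes via Equation~\eqref{EN1} to $f^m(x_ux_v)^m\in I(G)^{nm}$ for some $m$, applies Morey's identity $I(G)^{nm}\colon(x_ux_v)^m=I(G)^{m(n-1)}$ from \cite[Lemma 2.10]{MO}, and then reads off $f\in\overline{I(G)^{n-1}}$ from Equation~\eqref{EN1} again. This is a three-line reduction to a known result for ordinary powers of edge ideals, but it imports a nontrivial combinatorial lemma as a black box. Your argument instead works entirely inside the Newton polyhedron: you reduce the colon identity to the implication $\alb+\e_u+\e_v\in nP\Rightarrow\alb\in(n-1)P$ and establish that every facet normal $\a$ from Lemma~\ref{NPH} satisfies $\a_u+\a_v\leqslant 1$, using the leaf hypothesis to observe that $\e_u+\e_v$ is the unique exponent of a generator with positive $v$-coordinate, plus a dimension count on the affine hull of the facet to rule out the degenerate cases. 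The polyhedral claim is the crux and is carefully argued (both the case $\a_v>0$, where you force $\e_u+\e_v\in F$ and hence $\a_u+\a_v=1$, and the case $\a_v=0$, where $\a_u>1$ would exclude $u$ from every generator on $F$ and again collapse the facet onto a coordinate hyperplane through the origin). Your proof is self-contained and makes the role of the leaf transparent at the level of the polyhedron, at the cost of being longer than the paper's citation-based reduction; both approaches are valid.
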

\begin{proof} Let $f$ be a monomial of $R$ such that $fx_ux_v \in \overline{I(G)^n}$. By Equation (\ref{EN1}), $(fx_ux_v)^m \in I(G)^{nm}$ for some $m\geqslant 1$. It follows that
$f^m \in I(G)^{nm} \colon (x_ux_v)^m$. On the other hand, by \cite[Lemma 2.10]{MO} we have  $I(G)^{nm} \colon (x_ux_v)^m = I(G)^{m(n-1)}$. It follows that
$f^m \in I(G)^{m(n-1)}$, and therefore $f \in \overline{I(G)^{n-1}}$. Hence, $\overline{I(G)^n} \colon x_ux_v \subseteq \overline{I(G)^{n-1}}$.

The reverse inclusion follows from Lemma \ref{I1} since $x_ux_v \in I(G)$, and the proof is complete.
\end{proof}

The following lemma shows that the bound in Lemma \ref{A3} is sharp for pseudoforests.

\begin{lem}\label{main-as-one} If $G$ is a pseudoforest, then $\mi \in \ass(R/\overline{I(G)^n})$ if and only if $n\geqslant  n_1(G)$.
\end{lem}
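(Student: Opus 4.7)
The backward implication $n \geqslant n_1(G) \Rightarrow \mi \in \ass(R/\overline{I(G)^n})$ is Lemma~\ref{A3}; I concentrate on the converse. Suppose $\mi = \overline{I(G)^n} \colon \x^{\alb}$ for some $\alb \in \N^r$; I must prove $n \geqslant n_1(G)$. By \cite[Proposition~3.3]{MMV} every connected component of $G$ must be nonbipartite, so in our pseudoforest setting each component is a unicyclic graph whose unique cycle is odd. The plan is to proceed by induction on $\upsilon(G)$.

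In the base case $G$ has no leaves, so $G$ is a disjoint union of $p$ odd cycles $C_1, \ldots, C_p$ of lengths $2m_i - 1$. Lemma~\ref{EX001}(1) gives $|\alb| = 2n - 1$, which is odd; Lemma~\ref{EX002}(1) gives $\alpha_j \geqslant 1$ for every $j$, so each cycle-sum $S_i := \sum_{j \in V(C_i)} \alpha_j$ satisfies $S_i \geqslant 2m_i - 1$; and Lemma~\ref{EX002}(2) strengthens this to $S_i \geqslant 2(2m_i - 1)$ whenever $S_i$ is even. Because $|\alb| = \sum_i S_i$ is odd, an odd number of the $S_i$ are odd. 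Minimising $\sum_i S_i$ over the admissible parity patterns---taking all $p$ sums odd when $p = 2s+1$, and exactly $p-1$ sums odd (with the single even sum placed at the cycle of smallest $m_i$) when $p = 2s$---one finds in each case that $|\alb| \geqslant 2n_1(G) - 1$, whence $n \geqslant n_1(G)$.

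For the inductive step, pick a leaf $v$ of $G$ with unique neighbour $u$ and set $G' = G \setminus \{v\}$. Removing a leaf preserves both the cycle and the non-bipartiteness of each component, so $G'$ is again a pseudoforest with every component nonbipartite unicyclic, and $\upsilon(G') < \upsilon(G)$. Directly from the definition of $n_1$, one has $n_1(G) = n_1(G')$ when $\deg_G(u) \geqslant 3$, while $n_1(G) = n_1(G') + 1$ when $\deg_G(u) = 2$ (since then $u$ becomes a new leaf in $G'$, so $\upsilon - \varepsilon_0$ drops by one). Applying Lemma~\ref{leaf} to $x_u x_v \x^{\alb} \in \overline{I(G)^n}$ (valid because $x_v \x^{\alb} \in \overline{I(G)^n}$) yields $\x^{\alb} \in \overline{I(G)^{n-1}}$ and likewise $x_i \x^{\alb} \in \overline{I(G)^{n-1}}$ for every $i$. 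Using the projection identity that $(\btb, 0) \in NP(I(G)^k)$ if and only if $\btb \in NP(I(G')^k)$, one constructs $\alb' \in \N^{V(G')}$ (essentially $\alb|_{V(G')}$, adjusted by a single edge vector when parity so requires) satisfying $\mi_{G'} = \overline{I(G')^{n'}} \colon \x^{\alb'}$ with $n - n' \geqslant n_1(G) - n_1(G')$; the inductive hypothesis $n' \geqslant n_1(G')$ then gives $n \geqslant n_1(G)$.

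The main obstacle is the subcase $\deg_G(u) = 2$, where the naive restriction $\alb' = \alb|_{V(G')}$ gives $n' = n - \alpha_v/2$ and so closes the induction only when $\alpha_v \geqslant 2$; for $\alpha_v \in \{0,1\}$ one is short by exactly one unit. The fix combines the parity identities of Lemmas~\ref{EX001}, \ref{EX002}, and \ref{NTIJ} with the facet description of $NP(I(G))$ from Lemma~\ref{NPH} and the vertex-cover inequality of Lemma~\ref{vertex-cover}: either parity forces $\alpha_v$ to be at least $2$, or else $\alb'$ cannot be a tight witness for $G'$ at the level $n' = n_1(G')$ and must witness instead at some $n' \geqslant n_1(G') + 1$, so the missing unit is recovered. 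Carrying out this final structural step is the heart of the argument.
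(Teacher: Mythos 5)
Your base case (no leaves, so $G$ is a disjoint union of odd cycles) is correct and is essentially the paper's argument: Lemma \ref{EX001}(1) gives $|\alb|=2n-1$, Lemma \ref{EX002} gives $\alpha_i\geqslant 1$ everywhere and $\alpha_i\geqslant 2$ on any cycle with even coordinate sum, and the parity count over the cycles yields $n\geqslant n_1(G)$ in both parity classes of $p$. The inductive step, however, has a genuine gap, and you say so yourself: ``Carrying out this final structural step is the heart of the argument.'' What you sketch in its place does not work. The dichotomy ``either parity forces $\alpha_v\geqslant 2$, or else $\alb'$ cannot be a tight witness'' is aimed at the wrong quantity: after normalizing $n$ to be \emph{minimal} with $\mi\in\ass(R/\overline{I(G)^n})$ (legitimate by Lemma \ref{HIO}, since proving the bound for the minimal such $n$ proves it for all of them), one shows that $\alpha_v=0$ for \emph{every} leaf $v$ --- if $\alpha_v\geqslant 1$ then also $\alpha_u\geqslant 1$, one factors $x_ux_v$ out of $\x^{\alb}$ and Lemma \ref{leaf} exhibits $\mi\in\ass(R/\overline{I(G)^{n-1}})$, contradicting minimality. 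So the ``missing unit'' in the case $\deg_G(u)=2$ is never recovered from $\alpha_v$, and your formula $n'=n-\alpha_v/2$ for the naive restriction is not meaningful (it need not be an integer, and the degree--power relation $|\alb|=2n-1$ only holds for genuine witnesses).

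The mechanism that actually closes the induction is different. With $\alpha_v=0$ for leaves, deleting the leaf $v$ does \emph{not} change the power: the restricted monomial satisfies $(x_i\mid i\in V(H))=\overline{I(H)^n}\colon\x^{\alb}$ for $H=G\setminus v$ at the \emph{same} $n$ (the paper's Claim 4). When $u$ is not a leaf of $H$ one has $n_1(G)=n_1(H)$ and induction finishes immediately. When $u$ becomes a leaf of $H$ (your case $\deg_G(u)=2$), so that $n_1(G)=n_1(H)+1$, the extra unit comes from the \emph{new leaf edge of $H$}: one shows $\alpha_u\geqslant 1$ (because $u$ is adjacent to the leaf $v$ in $G$) and $\alpha_i\geqslant 1$ for the unique non-leaf neighbour $i$ of $u$ (a separate argument on the factorization $x_u^m\x^{m\alb}\in I(G)^{nm}$), writes $\x^{\alb}=(x_ux_i)f$, and applies Lemma \ref{leaf} \emph{inside $H$} to get $(x_j\mid j\in V(H))=\overline{I(H)^{n-1}}\colon f$, whence $n-1\geqslant n_1(H)$ by induction. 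None of these ingredients (minimality of $n$, vanishing of $\alb$ at leaves, preservation of the witness under leaf deletion at the same power, positivity of $\alpha_u$ and $\alpha_i$) is established in your proposal, and your Newton-polyhedron projection identity is not a substitute for them; as written, the converse direction is not proved.
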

\begin{proof} The if part follows from Lemma \ref{A3}, so we prove the only if part. 

Suppose that $G_1,\ldots, G_p$ are connected components of $G$. Let $2m_i-1$ be the length of the odd cycle of $G_i$ for all $i$; and $2m-1$ the minimum length of odd cycles of $G$.

We will prove by induction on $\upsilon(G)$ that if $\mi \in \ass(R/\overline{I(G)^n})$, then $n\geqslant n_1(G)$. By Lemma \ref{HIO} we may assume that $n$ is minimal.

First we consider the base case is when $G$ has no leaves, i.e. each $G_i$ is a cycle.  We consider two possible subcases:

{\it Subcase $1$}: $p = 2s+1$ for some $s\geqslant 0$. By Lemma \ref{EX002} we have $\alpha_i \geqslant 1$ for all $i$. Together with Lemma \ref{EX001} we have
$$2n-1 = |\alb| \geqslant r,$$
and so 
\begin{align*}
n & \geqslant \frac{1}{2}(r+1) = \frac{1}{2}((2m_1-1)+\cdots+(2m_{2s+1}-1)+1)\\
&=m_1+\cdots + m_{2s+1} - s = n_1(G),
\end{align*}
and the lemma holds in this case.
\medskip

{\it Subcase $2$}: $p = 2s$ for some $s\geqslant 1$. By Lemma \ref{EX001} we have 
$$|\alb|  = \sum_{j=1}^{2s} \left(\sum_{i\in V(C_j)} \alpha_i\right) = 2n-1.$$
Therefore, $\sum_{i\in V(C_j)} \alpha_i$ is even for some $i$. Without loss of generality, we may assume that $\sum_{i\in V(C_1)} \alpha_i$ even. 

By Lemma \ref{EX002} we have
$$\alpha_i \geqslant 2 \text{ for all } i\in V(C_1),\text{ and } \alpha_i \geqslant \text{ for all } i\notin V(C_1).$$
It follows that
\begin{align*}
2n-1 &= |\alb| =\sum_{i\in V(C_1)} \alpha_i+ \sum_{j=2}^{2s} \left(\sum_{i\in V(C_j)} \alpha_i\right)\geqslant 2(2m_1-1) +  \sum_{j=2}^{2s} (2m_j-1)\\
&= 2\sum_{j=1}^{2s}m_i - 2s+ 2m_1 - 1.
\end{align*}
Hence,
$$n \geqslant \sum_{j=1}^{2s}m_i - s + m_1\geqslant \sum_{j=1}^{2s}m_i - s + m = n_1(G),$$
and hence  the lemma holds in this case.

\medskip

We next consider the case $G$ has leaves. We start by proving  four following  claims:
\medskip

{\it Claim $1$:} If a vertex $v$ of $G$ is adjacent to a leaf, then $\alpha_v\geqslant 1$. Indeed, let $u$ be a leaf of $G$ adjacent to $v$. Since $x_u \x^{\alb} \in \overline{I(G)^n}$, by Equation (\ref{EN1}) one has $x_u^m \x^{m\alb} \in I(G)^{nm}$ for some $m\geqslant 1$. Since $u$ is a leaf of $G$, we deduce that $x_ux_v$ divides $x_v^m \x^{m\alb}$, and so $\alpha_v\geqslant 1$, as claimed.  
\medskip

{\it Claim $2$}: $\alpha_v = 0$ for every leaf $v$. Indeed, assume on contrary that $\alpha_v \geqslant 1$. Let $u$ be a neighbor of $v$. Then, $\alpha_u\geqslant 1$ by Claim $1$. Hence, we can write $\x^{\alb} = (x_ux_v)f$ for some monomial $f$. By Lemma \ref{leaf} we have
$$\mi = \overline{I(G)^{n}} \colon  (x_u x_v)f =  (\overline{I(G)^{n}} \colon  x_u x_v) : f =  \overline{I(G)^{n-1}} \colon  f,$$
so $\mi \in \ass(R/\overline{I(G)^{n-1}})$, which contradicts the minimality of $n$. Therefore, $\alpha_v = 0$, as claimed.

\medskip

{\it Claim $3$}: If $u$ is a non-leaf vertex such that $N_G(u)$ has only one a non-leaf vertex $v$, then $\alpha_v \geqslant 1$.

Indeed, assume on the contrary that $\alpha_v = 0$. Together with Claim $2$, it forces $\alpha_i = 0$ for all $i\in N_G(u)$. Since $x_u x^{\alb} \in \overline{I(G)^n}$ but $x^{\alb} \notin \overline{I(G)^n}$. It follows that there is $m\geqslant 1$ such that
$$x_u^m \x^{m\alb} \in I(G)^{nm} \text{ but } \x^{m\alb} \notin I(G)^{nm}.$$
It follows that 
\begin{equation}\label{BL}
x_u^m \x^{m\alb} = gf_1\cdots f_{nm},
\end{equation}
where $f_1,\ldots,f_{nm}$ are quadratic monomials of $I(G)$ and $g$ is a monomial of $R$. If $x_u \mid f_j$ for some $j$, then $f_j = \{u,i\}$ for some $i \in N_G(u)$. In this case, by Formula $(\ref{BL})$, we must have $\alpha_i \ne 0$, a contradiction. Hence, $x_u \nmid f_j$ for all $j$. Together with Formula $(\ref{BL})$, it yields $x_u^m \mid g$. Thus, $f_1\cdots f_{nm} \mid \x^{\alb}$, and thus $\x^{\alb} \in \overline{I(G)^n}$, a contradiction. Therefore, we must have $x_v \geqslant 1$, as claimed.

\medskip

The following claim can check straightforward by using Claim $2$.
\medskip

{\it Claim $4$}: If $v$ is a leaf and $H = G\setminus v$, then $(x_i\mid i\in V(H)) = \overline{I(H)^n} \colon \x^{\alb}$. 

\medskip 

We now continue to prove the lemma. Let $v$ be a leaf of $G$ and $u$ is the neighbor of $v$. Let $H = G \setminus v$. We consider two subcases:
\medskip

{\it Subcase $1$}: $u$ is not a leaf of $H$. Then, 
$$\upsilon(G) = \upsilon(H)+1, \ \varepsilon_0(G) = \varepsilon_0(H) +1,$$
and hence $n_1(G) = n_1(H)$. Since $\upsilon(H) < \upsilon(H)$, together Claim $4$ and the hypothesis induction we have
$$n \geqslant n_1(H) = n_1(G),$$
and the lemma holds.

{\it Subcase $2$}: $u$ is a leaf of $H$. Then, 
$$\upsilon(G) = \upsilon(H)+1, \ \varepsilon_0(G) = \varepsilon_0(H),$$
and hence $n_1(G) = n_1(H)+1$.

Let $i$ be a neighbor of $u$ in $H$. Observe that $i$ is the only non-leaf vertex in $N_G(u)$, so that $\alpha_i \geqslant 1$ by Claim $3$. It follows that $\x^{\alb} = (x_ux_i)f$ for some monomial $f$ of $R$. By Claim $4$ and Lemma \ref{leaf} we have
$$(x_j \mid j \in V(H)) = \overline{I(H)^n} \colon \x^{\alb} = (\overline{I(H)^n} \colon x_ux_i)\colon f = \overline{I(H)^{n-1}} \colon f.$$

Since $\upsilon(H) < \upsilon(G)$, by the induction hypothesis we have $n-1\geqslant n_1(H)$. Hence, $n \geqslant n_1(H)+1=n_1(G)$. The proof of the lemma is complete.
\end{proof}

We are now prove the bound in Theorem \ref{T3} is sharp for pseudoforests.

\begin{thm} \label{T6} If $G$ is a pseudoforest, then $\overline{\astab}(I(G)) = \phi_0(G)$.
\end{thm}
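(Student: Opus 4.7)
The inequality $\overline{\astab}(I(G)) \leq \phi_0(G)$ is Theorem \ref{T3}, so the task is to prove $\overline{\astab}(I(G)) \geq \phi_0(G)$. The plan is to exhibit a monomial prime $Q$ that first belongs to $\ass(R/\overline{I(G)^n})$ precisely at $n = \phi_0(G)$. Write $G = G_0 \cup G_1$, where $G_0$ is the union of bipartite components and $G_1$ the union of nonbipartite components; Lemma \ref{L03} says $I(G_0)$ is normally torsion-free, so Proposition \ref{T1} yields $\overline{\astab}(I(G)) = \overline{\astab}(I(G_1))$, and $\phi_0(G) = \phi_0(G_1)$ since $n_0$ depends only on nonbipartite components. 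Hence we may assume every component of $G$ is nonbipartite. Pick a union $G^*$ of some components of $G$ with $n_0(G^*) = \phi_0(G)$, and let $W$ be the union of the remaining components.

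If $W = \emptyset$, then $G^* = G$; since each component of the pseudoforest $G$ is unicyclic, $n_1(G) = n_0(G)$, and Lemma \ref{main-as-one} gives $\mi \in \ass(R/\overline{I(G)^n})$ iff $n \geq n_0(G) = \phi_0(G)$, finishing this case. Otherwise let $S$ be a maximal independent set of $W$. Every maximal independent set is dominating, so $N_W[S] = V(W)$; and because $W$ and $G^*$ share no edges, $S$ is also independent in $G$ with $N_G[S] = V(W)$, whence $G \setminus N_G[S] = G^*$. Define $Q := (x_k \mid k \in V(G) \setminus S)$. The disjoint decomposition $V(G) \setminus S = V(G^*) \sqcup N_G(S)$ gives
\[
Q = (x_u \mid u \in N_G(S)) + (x_i \mid i \in V(G^*)).
\]

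By iterated application of Lemma \ref{DI} (equivalently, by the fact that saturation at the variables in $S$ preserves those associated primes disjoint from $S$), $Q \in \ass(R/\overline{I(G)^n})$ iff $Q \in \ass(R/\overline{(I(G)_S)^n})$. A direct computation of the saturation, using that $S$ is independent in $G$ and $N_G[S] \supseteq V(W)$, yields $I(G)_S = (x_u \mid u \in N_G(S)) + I(G^*)$. Since $(x_u \mid u \in N_G(S))$ is a monomial prime, hence normally torsion-free, Proposition \ref{T1} applies to give
\[
\ass\!\left(R/\overline{(I(G)_S)^n}\right) = \left\{(x_u \mid u \in N_G(S)) + \q \,:\, \q \in \ass(R/\overline{I(G^*)^n})\right\}.
\]
Since each such $\q$ is a monomial prime in the variables of $V(G^*)$, comparison with the decomposition of $Q$ forces $\q = (x_i \mid i \in V(G^*))$. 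Lemma \ref{main-as-one} applied to the pseudoforest $G^*$ (all components nonbipartite, with $n_1(G^*) = n_0(G^*) = \phi_0(G)$) then shows that this $\q$ lies in $\ass(R/\overline{I(G^*)^n})$ exactly when $n \geq \phi_0(G)$. Hence $Q$ first enters $\ass(R/\overline{I(G)^n})$ at $n = \phi_0(G)$, giving $\overline{\astab}(I(G)) \geq \phi_0(G)$. The key technical points are the saturation formula for $I(G)_S$ and the support-matching argument that forces $\q$ to be the maximal ideal on $V(G^*)$.
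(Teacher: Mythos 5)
Your proof is correct and takes essentially the same approach as the paper: reduce to the case where all components are nonbipartite, pick a union $G^*$ of components achieving $\phi_0(G)$, localize at a maximal independent set $S$ of the remaining components $W$ (so that $I(G)_S$ becomes a monomial prime plus $I(G^*)$), apply Proposition \ref{T1} to transfer the associated-prime question to $G^*$, and finish with Lemma \ref{main-as-one} using $n_0(G^*)=n_1(G^*)$ for unicyclic components. Your version is actually slightly cleaner in its organization and correctly cites Proposition \ref{T1} where the paper's text has an apparent typo referring to Proposition \ref{T2}.
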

\begin{proof}  By Theorem \ref{T3}, it suffices to show that $\overline{\astab}(I(G)) \geqslant \phi_0(G)$.  By Lemma \ref{L03} and Proposition \ref{T1} we may assume that all connected components of $G$ are non-bipartite. 

Suppose that $G_1,\ldots, G_p$ be connected components of $G$ such that $$\phi_0(G) = n_0(G_1 \cup \cdots \cup G_s), \text{ for } 1 \leqslant s \leqslant p.$$

Note that if every connected bipartite component of a graph $H$ is unicyclic, then $n_0(H) = n_1(H)$. Hence,
$$\phi_0(G) = n_1(G_1 \cup \cdots \cup G_s).$$

Let $n = \overline{\astab}(I(G))$. Since $\mi \in \ass(R/\overline{I(G)^m}$ for some $m\geqslant 1$ by Lemma \ref{main-as-one}, and hence
\begin{equation}\label{T100}
\mi \in \ass(R/\overline{I(G)^n}.
\end{equation}

If  $s = p$,  together with Lemma \ref{main-as-one}, Formula (\ref{T100}) implies that $n \geqslant n_1(G) = \phi_0(G)$, and so the theorem holds in this case. 

Assume that $1\leqslant s < p$. Let $G' = G_1 \cup \cdots\cup G_s$ and $G'' = G_{s+1} \cup\cdots \cup G_p$. Let $S$ be a maximal independent set of $G''$. Then, $C = V(G'')\setminus S$ is a minimal cover of $G''$. Moreover, $I(G)_S = \mi_C + I(G')$, where $\mi_C = (x_i \mid i\in C)$. 

Let $\nn = (x_i \mid i \in V(G'))$. By Lemma \ref{main-as-one}, we have $\nn \in \ass(R/\overline{I(G')^m})$ for some $m\geqslant 1$. By Proposition \ref{T2} we have $\p = \mi_C + \nn \in \ass(R/\overline{I(G)_S^m})$. Together with Lemma \ref{DI}, it yields
$\p \in \ass(R/\overline{I(G)^m})$. 

On the other hand, since $n = \overline{\astab}(I(G))$, by Lemma \ref{HIO} we get $\p \in  \ass(R/\overline{I(G)^n})$. Since $\p_S = \p$, we have
$$\p \in \ass(R/(\overline{I(G)^n})_S) = \ass(R/\overline{I(G)_S^n}) = \ass(R/\overline{\mi_C + I(G')^n}).$$
Together with Proposition $(\ref{T2})$, it forces $\nn \in \ass(R/\overline{I(G')^n})$. By Lemma \ref{main-as-one}, we obtain $n\geqslant n_1(G') = \phi_0(G)$, and the theorem follows.\end{proof}

Finally, we  prove the bound in Theorem \ref{T4} is sharp for pseudoforests..
\begin{thm} \label{T8} If $G$ is a pseudoforest with no cycles of length $4$, then $\overline{\dstab}(I(G)) = \phi_1(G)$.
\end{thm}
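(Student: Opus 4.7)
The plan is to use Lemma \ref{D1}(3) to express $\overline{\dstab}(I(G))$ as a sum of two invariants---one for the bipartite part of $G$ and one for the nonbipartite part---and to reduce the theorem to two separate sharpness statements. Write $G = H \cup W$, where $H$ is the union of all connected bipartite components of $G$ and $W$ is the union of all connected nonbipartite components. Both $H$ and $W$ are pseudoforests with no cycle of length $4$, as induced subgraphs of $G$. Lemma \ref{D1}(3) gives
$$\overline{\dstab}(I(G)) = \dstab(I(H)) + \overline{\dstab}(I(W)) - 1,$$
and the equality $\phi_1(G) = \phi_1(H) + \phi_1(W) - 1$ already observed in the proof of Theorem \ref{T4} follows directly from Definition \ref{DEF1}. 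Combined with the upper bound of Theorem \ref{T4}, the task reduces to showing the two lower bounds $\dstab(I(H)) \geqslant \phi_1(H)$ and $\overline{\dstab}(I(W)) \geqslant \phi_1(W)$.

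For the nonbipartite part $W$, every connected component is nonbipartite and $W$ is itself a pseudoforest, so Lemma \ref{main-as-one} characterises exactly when the maximal ideal $\mi_W$ of the polynomial ring $B$ on $V(W)$ is associated to $B/\overline{I(W)^n}$: this happens precisely when $n \geqslant n_1(W)$. As noted in the proof of Lemma \ref{D1}, this is equivalent to $\overline{\dstab}(I(W)) = n_1(W)$. A term-by-term comparison of the formula for $n_1$ (stated just before Lemma \ref{A3}) with the formula for $\phi_1$ in the case $s = 0$ of Definition \ref{DEF1}---both use the maximum odd cycle lengths of the components and the minimum odd cycle length of the whole graph---yields $n_1(W) = \phi_1(W)$, giving the desired equality $\overline{\dstab}(I(W)) = \phi_1(W)$.

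For the bipartite part $H$, the ideal $I(H)$ is normally torsion-free by Lemma \ref{L03}, so $\overline{\dstab}(I(H)) = \dstab(I(H))$. The lower bound $\dstab(I(H)) \geqslant \phi_1(H)$ for a bipartite pseudoforest $H$ with no cycle of length $4$ is the sharpness complement of the upper bound in \cite[Theorem 4.6]{Tr}, obtained by an explicit associated-prime witness on each bipartite unicyclic component whose cycle has length at least $6$, together with the classical cover witnesses on the tree components. Combining this with the $W$-computation above and the additive decomposition of $\phi_1$ yields $\overline{\dstab}(I(G)) \geqslant \phi_1(G)$, matching Theorem \ref{T4}.

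The main obstacle in this strategy is the bipartite sharpness step, which is the only ingredient not already developed in the present paper. The no-$4$-cycle hypothesis is essential precisely there, because on a bipartite unicyclic component with a $4$-cycle one has $k_i = 2$ and the standard cover-and-cycle construction of an associated-prime witness at the predicted power collapses to something strictly smaller than $\phi_1$. All remaining steps---the reduction via Lemma \ref{D1}, the characterisation $\overline{\dstab}(I(W)) = n_1(W) = \phi_1(W)$ via Lemma \ref{main-as-one}, and the additivity of $\phi_1$---are routine assembly.
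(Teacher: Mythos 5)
Your proposal follows essentially the same route as the paper: decompose $G$ into its bipartite part $H$ and nonbipartite part $W$, apply Lemma~\ref{D1}(3) and the additivity $\phi_1(G) = \phi_1(H) + \phi_1(W) - 1$, handle $W$ via Lemma~\ref{main-as-one} together with the observation that $n_1(W) = \phi_1(W)$ when all components are nonbipartite, and handle $H$ by appealing to the sharpness of the depth-stability bound for bipartite pseudoforests proved in \cite{Tr}. You correctly identify the bipartite sharpness step as the only ingredient imported from outside; the paper cites \cite[Theorem 5.1]{Tr} for exactly this. The only minor difference is expository: you spell out the reduction to two separate lower bounds and the identity $n_1(W)=\phi_1(W)$, while the paper states the conclusion more tersely. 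The substance matches.
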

\begin{proof} Let $G'$ be a subgraph of $G$ consisting of all bipartite connected components of $G$ and $G''$ consisting of all nonbipartite connected components of $G$. Then, we have
$$\overline{\dstab}(I(G)) = \dstab(I(G')) + \overline{\dstab}(I(G''))-1.$$
Note that 
$$\overline{\dstab}(I(G'')) = \min\{n\geqslant 1\mid \mi \in \ass(R/\overline{I(G)^n}\} = n_1(G).$$ 
Hence, the theorem follows from \cite[Theorem 5.1]{Tr} and Lemma \ref{main-as-one}.
\end{proof}

\subsection*{Acknowledgment}  The second author would like to thank Vietnam Institute for Advanced Study in Mathematics for the hospitality during his visit in 2020, when he started to work on this paper.

\end{document}